\title{Simplicity of the Lyapunov spectrum for classes of Anosov flows}
\author{Daniel Mitsutani}
\address[Mitsutani]{Department of Mathematics, the University of Chicago, Chicago, IL, USA, 60637}
\email{mitsutani@math.uchicago.edu}
\date{\today}
\theoremstyle{plain}
\newtheorem{theorem}{Theorem}[section]
\newtheorem{proposition}[theorem]{Proposition}
\newtheorem{lemma}[theorem]{Lemma}
\newtheorem{corollary}[theorem]{Corollary}
\newtheorem{definition}[theorem]{Definition}
\newtheorem{remark}[theorem]{Remark}
\def\eps{\varepsilon}
\def\title{\em}
\def\bar{\overline}
\def\cW{\mathcal{W}}
\def\cE{\mathcal{E}}
\def\cV{\mathcal{V}}
\def\cU{\mathcal{U}}
\def\cA{\mathcal{A}}
\def\cM{\mathcal{M}}
\def\G{\mathcal{G}}
\def\cC{\mathcal{C}}
\def\Or{\mathcal{O}}
\def\P{\mathcal{P}}
\def\cP{\mathcal{P}}
\def\transverse{\,\raise2pt\hbox to1em{\hfil$\top$\hfil}\hskip -1em \hbox
to1em{\hfil$\cap$\hfil}\,} 
\newcommand\Z{\mathbb{Z}}
\newcommand\R{\mathbb R}
\newcommand\C{\mathbb{C}}
\newcommand\exps{\vec{\lambda}}
\newcommand\N{{\mathbb N}}
\newlength{\figboxwidth} \setlength{\figboxwidth}{5.8in}
\begin{document}

\begin{abstract} We prove that in a $C^1$-open and $C^k$-dense set of some classes of $C^k$ Anosov flows all Lyapunov exponents have multiplicity 1 with respect to appropriate measures. The classes are geodesic flows with equilibrium states of Hölder-continuous potentials, volume-preserving flows, and all fiber-bunched Anosov flows with equilibrium states of Hölder-continuous potentials. In the proof, we use and prove perturbative results for jets of flows to modify eigenvalues of certain Poincaré maps and, using a Markov partition, apply the simplicity criterion of Avila and Viana \cite{av}.
\end{abstract}

\maketitle

\section{Introduction}

The existence of a positive Lyapunov exponent and more generally the multiplicity of the Lyapunov exponents of a system are of essential interest due to their relation to other dynamical invariants and the geometry of the associated dynamical foliations. In this paper, we seek to address the question of how often simplicity (i.e. all exponents of multiplicity 1) of Lyapunov spectrum arises for some classes of hyperbolic flows.

In \cite{bv}, Bonatti and Viana first established a criterion for simplicity of Lyapunov spectrum of a cocycle over a discrete symbolic base which holds in great generality with respect to a large class of measures. Applying a Markov partition construction, the authors also extend the results to cocycles over  hyperbolic maps, which naturally leads to the question of whether the criterion generically holds for the derivative cocycle in the space of diffeomorphisms. Indeed, without any further restrictions, the arguments in \cite{bv} can be modified without much difficulty to show that such a result would be possible, for appropriate choices of measures. 

Here we consider the question of genericity of simple spectrum in the continuous-time setting -- in particular in more restrictive classes (geodesic flows, conservative flows, etc.) of Anosov flows, which presents significant differences relative to the discrete-time scenario. We establish a method of constructing appropriate perturbations of the Lyapunov spectrum by perturbing the 1-jet of an appropriate Poincaré map within a given class. 

We apply it in different settings to obtain the following results. Let $X$ be a smooth closed manifold; precise definitions of the other terms below are given in Section \ref{sec:prel}:

\begin{theorem}[Geodesic flows]\label{main} For $3 \leq k \leq \infty$ we denote by $\mathcal{G}^k$ the set of $C^{k}$-Riemannian metrics on $X$ with sectional curvatures $1 \leq -K < 4$. 
	
	There exists a $C^2$-open and $C^k$-dense set in $\G^{k}$ of metrics such that with respect to the equilibrium state of any Hölder potential (e.g. Liouville measure, m.m.e.) the derivative cocycle of the geodesic flow has simple Lyapunov spectrum, i.e., all its Lyapunov exponents have multiplicity one. 
\end{theorem}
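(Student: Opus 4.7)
The plan is to apply the Avila--Viana simplicity criterion \cite{av} to the derivative cocycle of the geodesic flow restricted to the strong unstable subbundle $E^{uu}$, after reducing to a symbolic cocycle via a Markov partition. The first observation is that the pinching assumption $1 \leq -K < 4$ is a quarter-pinching condition: since the Lyapunov exponents on $E^{uu}$ lie in $[\sqrt{\min|K|},\sqrt{\max|K|}]$, the ratio of extremes is strictly less than $2$, which is exactly what is needed to make the cocycle fiber-bunched with uniform constants. Fiber-bunching, in turn, produces Hölder stable and unstable holonomies inside $E^{uu}$, which are the essential input to the \cite{av} criterion. The symmetric story on $E^{ss}$ handles the negative exponents.

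Next I would fix a Markov partition for the Anosov flow and pass to the Poincaré return map, turning the restriction of the derivative cocycle to $E^{uu}$ into a linear cocycle (locally constant up to holonomy) over a topologically mixing subshift of finite type. Equilibrium states of Hölder potentials for the flow project to Gibbs measures for the return map, which satisfy the bounded-distortion hypotheses required by \cite{av}. The criterion then gives simple spectrum provided the cocycle is \emph{pinching} --- a periodic orbit whose return matrix on $E^{uu}$ has all eigenvalues of distinct modulus --- and \emph{twisting} --- a second periodic orbit whose return matrix, composed with the appropriate holonomies, preserves no flag invariant under the pinching matrix. Both conditions are $C^2$-open in the metric, since they depend only on the 1-jets of the return maps at finitely many periodic orbits; this will give the openness half of the theorem, so the work is to establish density.

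The density step, and the main obstacle, is to realize pinching and twisting by a $C^k$-small perturbation of the metric inside $\G^k$. Unlike an abstract Anosov flow, one cannot freely perturb the 1-jet of a Poincaré map: a modification of $g$ changes all return maps simultaneously, and along a given closed geodesic the 1-jet of the Poincaré return is determined by the curvature tensor along the orbit via the Jacobi equation. The plan is to localize a metric perturbation to a tubular neighborhood of a short arc of a chosen closed geodesic (and, for twisting, of a second geodesic disjoint from the first), translate the deformation into an infinitesimal change of the curvature tensor along that arc, and verify that the induced linear map from admissible curvature perturbations to 1-jets of the Poincaré return is surjective onto a subspace large enough to move eigenvalues and eigenflags into generic position. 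The quarter-pinching margin $|K|\in[1,4)$ leaves room to perform these perturbations while keeping the metric inside $\G^k$. Once this perturbative result for 1-jets is established, it yields a $C^k$-dense set of metrics in $\G^k$ on which pinching and twisting hold; combined with the $C^2$-openness noted above, the Avila--Viana criterion then delivers simplicity of the Lyapunov spectrum with respect to every Hölder equilibrium state, completing the proof.
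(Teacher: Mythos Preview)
Your overall architecture matches the paper's: reduce via a Markov partition to a dominated cocycle over a shift, and verify pinching and twisting for the Avila--Viana criterion by perturbing the metric through the Klingenberg--Takens theorem on Poincar\'e $1$-jets. Openness and the twisting perturbation go essentially as you describe (though twisting is formulated for a \emph{homoclinic} point rather than a second periodic orbit, and its openness uses continuity of the holonomies, not just of finitely many $1$-jets).

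There is, however, a genuine gap in your treatment of pinching. You assume that a $C^k$-small perturbation of the $1$-jet of the return map can place all eigenvalues on $E^{u}$ at \emph{distinct moduli}, but this condition is not dense in $\text{Sp}(2n)$: if the return map has a non-real complex-conjugate pair with argument bounded away from $0$, every nearby matrix still has such a pair, hence eigenvalues of equal modulus. So Klingenberg--Takens alone cannot produce pinching on a given periodic orbit. The paper overcomes this with a rotation-number and shadowing argument (Section~\ref{ssec:rotation}, Proposition~\ref{pinch}, Lemma~\ref{rotationlemma}): one first perturbs the argument of the complex pair on a fixed orbit $\mathcal{O}$ along a path $g_s$, then considers periodic orbits $\mathcal{O}(w_{n,s})$ that shadow a homoclinic to $\mathcal{O}$ and spend time $\sim 2n\ell$ near it. Their rotation numbers converge to $\rho_{\mathcal{O}}(s)$ while their periods $\ell(w_{n,s})$ grow to infinity; a uniform bound on $|\ell(w_{n,1})-\ell(w_{n,0})|$ (Proposition~\ref{boundlength}, via exponential shadowing and H\"older continuity of the geodesic stretch) then forces $|\tilde\theta_n(1)-\tilde\theta_n(0)|>2\pi$ for large $n$, so by the intermediate value theorem some $(n,s)$ has real eigenvalues on the relevant $2$-plane. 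Iterating over the complex blocks yields a pinching orbit. This propagation mechanism is the main technical content of the paper and is absent from your outline.
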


\begin{theorem}[Conservative flows] \label{volmain} For a fixed smooth volume $m$ and for $2 \leq k \leq \infty$ let $\mathfrak{X}^k_{m}(X)$ be the set of divergence-free (with respect to $m$) $C^k$ vector fields on $M$ which generate (strictly) $\frac{1}{2}$-bunched Anosov flows. 
	
	Then flows in a $C^1$-open and $C^k$-dense set of $\mathfrak{X}^k_{m}(X)$ have simple Lyapunov spectrum with respect to $m$.
	
\end{theorem}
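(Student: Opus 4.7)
The strategy is to reduce simplicity of the Lyapunov spectrum to the Avila--Viana criterion \cite{av} applied to a symbolic linear cocycle. Given $V \in \mathfrak{X}^k_m(X)$, I fix a Bowen--Ratner Markov partition for $\varphi_V$, with associated cross-section $\Sigma$ and Poincaré return map $P$. Quotienting by the flow direction, the derivative cocycle on the transverse bundle $E^s \oplus E^u$ descends to a linear cocycle over $P$, and the volume $m$ descends to an absolutely continuous measure on $\Sigma$ with the bounded-distortion local product structure needed by \cite{av}. The $\tfrac{1}{2}$-bunching hypothesis ensures that the stable and unstable holonomies of this cocycle exist and are fiber-bunched in the sense of \cite{av,bv}, placing us inside the Avila--Viana framework.

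In this framework, simplicity of the spectrum reduces to two algebraic conditions: \emph{pinching}, namely existence of a periodic orbit $\gamma$ whose linear Poincaré return map $DP|_\gamma$ has simple spectrum, and \emph{twisting}, namely existence of a second periodic (or homoclinic) orbit $\gamma'$ providing stable/unstable holonomies that do not respect the eigenspace decomposition at $\gamma$. Both conditions are $C^1$-open in the flow, so it suffices to achieve them by $C^k$-small perturbations inside $\mathfrak{X}^k_m(X)$. For this I invoke the 1-jet perturbation apparatus developed earlier in the paper, adapted to the divergence-free constraint: I construct divergence-free vector fields $W$ supported in a thin tubular neighborhood of $\gamma$ and vanishing on $\gamma$ itself, whose effect on the 1-jet of $\varphi_V$ along $\gamma$ realizes a prescribed change of $DP|_\gamma$ compatible with the volume-preservation constraint on the transverse cocycle. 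Applying this at two suitably chosen orbits produces pinching first, and then twisting.

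The main technical obstacle is the perturbation step itself: one must show that the space of admissible, divergence-free 1-jet perturbations of $V$ along a given periodic orbit is rich enough to realize any transverse-cocycle modification consistent with the determinant constraint imposed by conservativity. Concretely, this amounts to verifying that the infinitesimal map from divergence-free perturbations (locally parametrized by closed forms of appropriate degree) to modifications of $DP|_\gamma$ is surjective onto the tangent space of volume-preserving linear return maps, so that the symmetries forced on the cocycle by $m$ do not themselves entail multiplicity. Once this surjectivity is established at a single orbit, density of simple-spectrum flows in $\mathfrak{X}^k_m(X)$ follows; combined with $C^1$-openness of pinching and twisting, we obtain the $C^1$-open and $C^k$-dense conclusion.
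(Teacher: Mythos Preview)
Your overall strategy matches the paper's: reduce to the Avila--Viana criterion via a Markov partition, establish pinching and twisting by local perturbations, and identify the conservative analogue of the Klingenberg--Takens perturbation lemma (the paper's Theorem~\ref{conspres}) as the main new technical ingredient. That part is correct.

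However, there are two genuine gaps. First, a regularity issue: for a general $\tfrac{1}{2}$-bunched Anosov flow the splitting $E^u\oplus E^0\oplus E^s$ need not be $C^1$, so the cocycle on $E^u\oplus E^s$ that you propose may not have enough H\"older regularity to be dominated in the sense required by \cite{av}. The paper handles this by working instead with the quotient bundles $Q^u=E^{cu}/E^0$ and $Q^s=E^{cs}/E^0$, which are $C^1$ and $1$-bunched by Proposition~\ref{bunchingquotient}; you should do the same.

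Second, and more seriously, your treatment of pinching is incomplete. The Avila--Viana pinching condition (Definition~\ref{deftyp}(P)) demands eigenvalues of \emph{distinct norms}, not merely simple spectrum. If the return map over a periodic orbit has a complex conjugate pair, no $C^k$-small perturbation supported near that single orbit can separate their norms: the pair persists robustly. Your description (``construct $W$ supported in a thin tubular neighborhood of $\gamma$ \dots\ realizes a prescribed change of $DP|_\gamma$ \dots\ applying this at two suitably chosen orbits produces pinching first, and then twisting'') suggests one local perturbation per condition, which cannot work here. The paper resolves this via the rotation-number machinery of Section~\ref{ssec:rotation} and Proposition~\ref{pinch}: one perturbs the argument of the complex eigenvalue slightly along a one-parameter family $g_s$, then passes to long shadowing orbits $w_{n,s}$ of a homoclinic point; continuity of rotation numbers in the measure and in the cocycle, together with the fact that $\ell(w_{n,s})\to\infty$, forces the lifted argument $\tilde\theta_n(s)$ to cross a multiple of $2\pi$ (Lemma~\ref{rotationlemma}), producing a nearby orbit with real eigenvalues. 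This entire step carries over verbatim to the conservative setting once Theorem~\ref{conspres} is available, but it is not optional and is not captured by ``surjectivity of the infinitesimal perturbation map onto volume-preserving linear return maps.''
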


\begin{theorem}[All flows] \label{generalmain} For $2 \leq k \leq \infty$ let $\mathfrak{X}^k_A(X)$ be the set of  $C^k$ vector fields on $M$ which generate (strictly) $\frac{1}{2}$-bunched Anosov flows. 
	
	Then flows in a $C^1$-open and $C^k$-dense set of $\mathfrak{X}^k_A(X)$ have simple Lyapunov spectrum with respect to the equilibrium state of any Hölder potential (e.g. SRB measure, m.m.e.).
	
\end{theorem}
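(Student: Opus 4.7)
The plan is to leverage the Avila--Viana simplicity criterion \cite{av} for linear cocycles over subshifts of finite type, transferred to the Anosov flow setting through a Markov partition of a Poincaré section. Fix a vector field $Y \in \mathfrak{X}^k_A(X)$ with flow $\phi_t$: the first-return map to a sufficiently refined section is conjugate to a subshift $\sigma:\Sigma\to\Sigma$, and the derivative of $\phi_t$ restricted to the stable bundle $E^{s}$ (and, symmetrically, to $E^{u}$) induces a Hölder linear cocycle over $\sigma$. The strict $\tfrac12$-bunching hypothesis translates, after this reduction, into the fiber-bunching condition required by \cite{av}, under which the relevant Hölder stable/unstable holonomies exist. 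Equilibrium states of Hölder potentials on $X$ lift via the Markov partition to Gibbs measures on $\Sigma$, so simplicity for the symbolic cocycle with respect to those Gibbs measures yields simplicity of the Lyapunov spectrum of the derivative cocycle on $X$ with respect to the equilibrium state.

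It remains to verify the two hypotheses of the criterion: \emph{pinching}, i.e.\ a periodic orbit whose linearized return cocycle has distinct eigenvalue moduli, and \emph{twisting}, i.e.\ a second periodic point whose stable/unstable holonomies do not preserve any flag built from the eigenspaces at the pinched orbit. Both are open conditions on the $1$-jet data of finitely many periodic orbits, so once achieved they persist $C^1$-robustly. For density, I fix a periodic orbit $\gamma$ of $Y$ and, using the jet-perturbation machinery the paper develops, produce a $C^k$-small perturbation supported in a small flow-box meeting $\gamma$ that prescribes an arbitrary modification of the $1$-jet of the Poincaré return map of $\gamma$; unlike Theorem \ref{volmain}, there is no conservation constraint here, so the full linear freedom in the $1$-jet is available and pinching is immediate. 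A second localized perturbation at a transverse heteroclinic point then realizes twisting, again using the jet-perturbation lemmas and keeping the perturbation small enough to preserve Anosov-ness and strict $\tfrac12$-bunching.

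Putting this together, for every $Y\in\mathfrak{X}^k_A(X)$ and every $\varepsilon>0$ the combined two-step perturbation yields a $C^k$-nearby vector field whose derivative cocycle satisfies the pinching and twisting hypotheses of \cite{av} with respect to every Gibbs measure coming from a Hölder potential, and hence has simple Lyapunov spectrum. The resulting set is $C^k$-dense by this construction and $C^1$-open because each of Anosov-ness, strict $\tfrac12$-bunching, pinching, and twisting is open in the $C^1$ topology.

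The main obstacle, in my view, is the jet-perturbation step itself: one must guarantee that a small perturbation of the vector field produces a prescribed nontrivial change in the $1$-jet of the return map of a chosen periodic orbit while remaining in $\mathfrak{X}^k_A(X)$, and moreover that the perturbation does not inadvertently preserve some hidden flag that would obstruct pinching or twisting. The flow setting is more delicate than the discrete case because one perturbs a vector field rather than a diffeomorphism directly, so identifying which vector-field jet modifications surject onto a given class of Poincaré-map $1$-jet modifications is the core analytic point that has to be prepared; granted that technology, Theorem \ref{generalmain} reduces to the verification outlined above.
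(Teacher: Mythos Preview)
Your overall strategy matches the paper's, but there is a genuine gap in the pinching step. You claim that because there is no conservation constraint, ``the full linear freedom in the $1$-jet is available and pinching is immediate.'' This is false: pinching in the sense of Definition~\ref{deftyp}(P) requires the eigenvalues of the return map to have \emph{distinct norms}, and a complex conjugate pair $\lambda,\bar\lambda$ has equal norms. This is a $C^1$-stable phenomenon: a small perturbation of a matrix in $\mathrm{GL}(n,\R)$ with a nonreal eigenvalue still has a nonreal eigenvalue. So one cannot achieve pinching by perturbing the $1$-jet of the return map of a single periodic orbit, regardless of how much linear freedom is available. The paper handles this exactly as in the geodesic case (Section~\ref{sec: pt}): one slightly alters the argument of the complex eigenvalue at the given periodic orbit, then uses the rotation-number machinery (Section~\ref{ssec:rotation}, Lemma~\ref{rotationlemma}) to find a long shadowing periodic orbit on which the accumulated rotation crosses a multiple of $2\pi$, producing real eigenvalues there. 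The paper's proof of Theorem~\ref{generalmain} explicitly says to ``follow the propositions in Section~\ref{sec: pt}''; what becomes easier in $\mathrm{GL}(n)$ is only the auxiliary linear algebra (Lemmas~\ref{linalg} and~\ref{linalg2}) and the jet-perturbation lemma, not the elimination of complex pairs.

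A secondary issue: you work with the cocycle on $E^s$ and $E^u$ directly. For a general $\tfrac12$-bunched Anosov flow the strong bundles $E^{u}$, $E^{s}$ need not be $C^1$ (only $E^{cu}$, $E^{cs}$ are, by~\cite{hass}), so the Hölder exponent of the restricted cocycle may be too low to verify domination in the sense of Definition~\ref{dom}. The paper avoids this by passing to the quotient bundles $Q^{u}=E^{cu}/E^{0}$ and $Q^{s}=E^{cs}/E^{0}$, which are $C^1$ and on which the derivative cocycle is $1$-bunched (Proposition~\ref{bunchingquotient}); simplicity on $Q^{u,s}$ is equivalent to simplicity on $E^{u,s}$. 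You should make the same move. Finally, note that the twisting point is a \emph{homoclinic} point of the pinched periodic orbit, not a second periodic or heteroclinic point.
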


As indicated before, the proofs are accomplished by constructing a discrete symbolic system via a Markov partition to apply a simplicity criterion of Avila and Viana \cite{av}, which is itself an improvement of the criterion of Bonatti and Viana \cite{bv} aforementioned. In each class, we prove or use a previously established perturbational result to obtain density in the theorems above.

One main difficulty particular to the setting of $\R$-cocycles which was already present in \cite{bv} arises in attempting to perturb the norms of pairs of complex eigenvalues generically. In \cite{bv}, through the introduction of rotation numbers which vary continuously with the perturbation for orbits near a periodic point, a small rotation on a periodic orbit is propagated to an arbitrarily large one for a homoclinic point, which can then be made to have real eigenvalues. 

While such rotation numbers are well-defined for the particular perturbation of the cocycle introduced in \cite{bv}, a general construction which allows for perturbations of the base system has only been introduced recently in \cite{gourm}. However, the constructions in \cite{gourm} do not apply directly to flows, and so we introduce new ideas to control the eigenvalues of the cocycle in the continuous-time setting. 

Since the class of geodesic flows is the substantially more difficult case, we carry out the proof of Theorem \ref{main} in detail, and in Section \ref{sec:volpres} we prove the analogous results needed for Theorem \ref{volmain}.

\subsection{Outline} In Section \ref{sec:prel} we give the necessary background for the later sections; we summarize the main results of \cite{klta} and \cite{av} and introduce rotation numbers. For a more basic introduction to Lyapunov exponents and cocycles we refer the reader to \cite{v} and for background on geodesic flows \cite{pa}. In Sections \ref{sec: pt} and \ref{sec: proof} we specialize to the setting of the geodesic flows, giving the main arguments to prove of Theorem \ref{main}. Finally, in Section \ref{sec:volpres} we prove a perturbational result for the volume-preserving class, which by direct adaptation of the arguments of the previous sections proves Theorem \ref{volmain} and Theorem \ref{generalmain}.

\subsection{Acknowledgements} I would like to thank Amie Wilkinson for all her suggestions and continued guidance in the process of research leading up to this paper, and also for her help in reviewing the text. 

\section{Preliminaries} \label{sec:prel}

\subsection{Lyapunov exponents and Simplicity of Spectrum}  \label{ssec:bvi} Here we collect and fix the definitions and background results used in later sections. For a continuous flow $\Phi^t: X \to X$ on a compact metric space $X$ preserving an ergodic measure $\mu$, a continuous linear cocycle over $\Phi$ on a  linear bundle $\pi: \mathcal{E}\to X$ is a continuous map ${\cA}: \R \times \cE \to\cE$ such that the maps $$A^t_{\pi(v)} := \pi \circ {\cA}(t, \pi(v)): \cE_{\pi(v)} \to \cE_{\Phi^t(\pi(v))}$$
 are linear isomorphisms of the fibers and $\Phi^t \circ \pi = \pi \circ \cA^t$, where $\cA^t := \cA(t, \cdot)$.

Suppose $\log^+ \|A^t(x)\| \in L^1(X, \mu)$ for all $t \in \R$. For some fixed choice of norm $\|\cdot \|$ on the fibers, the fundamental result describing asymptotic growth of vectors under $\cA$ is Oseledets' theorem: there exists a set of numbers $\lambda_1, ..., \lambda_n \in \R$, with $\lambda_i \neq \lambda_j$ for $i\neq j$, a measurable splitting $\cE = \cE^1 \oplus \dots \oplus \cE^n$ and a set of full measure $Y \subseteq X$ such that for all $x \in Y$ and $t \in \R$ we have $A_x^t \cE^i_x = \cE^i_{\Phi^t(x)}$ and moreover for $v \in \cE^i_x$: 
	$$\lim_{t \to \pm \infty} \frac{1}{t} \log ||A_x^t v|| = \lambda_i.$$ 
	
	The numbers $\lambda_i$ are the \textit{Lyapunov exponents} of $\cA$ with respect to $\mu$.

 When all bundles $\cE^i$ are 1-dimensional, $\cA$ is said to have \textit{simple Lyapunov spectrum} with respect to $\mu$. When $X$ is a smooth manifold and $\Phi^t$ is $C^1$, the \textit{dynamical cocycle} on $\cE = TX$  is the derivative map $D\Phi^t$ of the flow, we often refer to its Lyapunov exponents as the Lyapunov exponents of $\Phi$ with respect to $\mu$ -- similarly, we say $\Phi$ has simple Lyapunov spectrum when the dynamical cocycle does.
 
 The definitions above hold in the discrete-time setting  of \cite{av}, with appropriate modifications, where the criterion for simplicity of Lyapunov spectrum we need is proved -- following their notation, we let $\hat{f}$ be the shift on the space $\hat{\Sigma} = \N^\Z$ and $\cA$ be a measurable cocycle on $\hat{\Sigma} \times \R^d$ over $\hat{f}$, which alternatively can be equivalently described by some measurable $\hat{A}: \hat{\Sigma}_T \to GL(d,\R)$ when the bundle is trivial, a harmless assumption since all bundles we consider are measurably trivializable. 
 
The theorems of Avila and Viana all require the additional bunching assumption: 
 
 \begin{definition}[Domination/Holonomies]\label{dom}
 	$\hat{A}$ is dominated if there exists a distance $d$ in $\hat{\Sigma}$ and constants $\theta < 1$ and $\nu \in (0, 1]$ such that, up to replacing $\hat{A}$ by some power $\hat{A}^N$:
 	\begin{enumerate}
 		\item [(1)] $d(\hat{f}(\hat{x}), \hat{f}(\hat{y})) \leq \theta d(\hat{x}, \hat{y})$ and $d(\hat{f}^{-1}(\hat{x}), 
 		\hat{f}^{-1}(\hat{y})) \leq \theta d(\hat{x}, \hat{y})$ for every $\hat{y} \in W^s_{loc}(\hat{x})$ and $\hat{z} \in W^u_{loc}(\hat{x})$ 
 		\item [(2)] The map $\hat{x} \mapsto \hat{A}(\hat{x})$ is $\nu$-Hölder continuous and $\|\hat{A}(\hat{x})\|\|\hat{A}^{-1}(\hat{x})\|\theta^\nu < 1$ for every $\hat{x} \in \hat{\Sigma}$.
 		
 	\end{enumerate}

 	If $\hat{A}$ is either dominated or constant on each cylinder, there exists a family of holonomies $\phi^u_{\hat{x}, \hat{y}}$, i.e., linear isomorphisms of $\R^d$ such that for each $\hat{x}, \hat{y} \in \hat{\Sigma}$ in the same local unstable manifold of $\hat{f}$ there exists $C_1 > 0$ such that: 
 	\begin{enumerate}
 		\item [(1)] $\phi^u_{\hat{x}, \hat{x}} = id$ and $\phi^u_{\hat{x}, \hat{y}} = \phi^u_{\hat{x}, \hat{z}} \circ \phi^u_{\hat{z}, \hat{y}}$,
 		\item [(2)]$\hat{A}(\hat{f}^{-1}(\hat{y})) \circ \phi^u_{\hat{f}^{-1}(x), \hat{f}^{-1}(y)} \circ \hat{A}^{-1}(\hat{x}) = \phi^u_{\hat{x}, \hat{y}},$
 		\item [(3)] $\|\phi^u_{\hat{x}, \hat{y}} - id\|  \leq C_1 d(\hat{x}, \hat{y})^\nu$.
 	\end{enumerate}
 
 There is a family $\phi^s$ of holonomies over stable manifolds satisfying analogous properties.
\end{definition}

For such cocycles, the holonomies allow to propagate the dynamics over single periodic orbits to obtain data on the Lyapunov spectrum of certain measures. Thus, the adaptation of the original pinching and twisting conditions for a monoid of matrices can be adapted to these cocycles as follows:
 
 \begin{definition}[Simple cocycles] \label{deftyp} Suppose $\hat{A} :  \hat{\Sigma} \to GL(d, \R)$ is either dominated or constant on each cylinder of $\hat{\Sigma}$ . We say that $\hat{A}$ is \textit{simple} if there exists a periodic point $\hat{p}$ and a homoclinic point $\hat{z}$ associated to $\hat{p}$ such that:
 	\begin{enumerate}
 		\item [(P)] the eigenvalues of $\hat{A}$ on the orbit of $\hat{p}$ have multiplicity 1 and distinct norms -- let $\omega_j \in \R P^{d-1}$ represent the eigenspaces, for $1 \leq j \leq d$; and
 		\item [(T)]	 $\{\psi_{\hat{p},\hat{z}}(\omega_i): i \in I\} \cup \{\omega_j: j \in J\}$ is linearly independent, for all subsets $I$ and $J$ of
 		${1,...,d}$ with $\# I + \# J \leq d$ where, denoting by $\phi^u$ and $\phi^s$ the holonomies as above,
 		$$\psi_{\hat{p},\hat{z}} =  \phi^s_{\hat{z},\hat{p}} \circ  \phi^u_{\hat{p},\hat{z}}.$$
 	\end{enumerate}
 \end{definition}
 
 An invariant probability measure $\hat{\mu}$ has local product structure if for every cylinder $[0:i]$:
 $$\hat{\mu}|[0:i] = \psi \cdot (\mu^+ \times \mu^-)$$
 where $\psi: [0:i] \to \R$ is continuous and $\mu^+$ and $\mu^-$ are the projections of $\hat{\mu}|[0:i]$ to spaces of one-sided sequences indexed by positive and negative indices respectively. For instance, this property holds for every equilibrium state of $\hat{f}$ associated to a Hölder potential \cite{bow}. 
 
 \begin{theorem} \label{bvmainthm}\cite[Theorem A]{av}
 	If  $\hat{A}$ is a simple cocycle then it has Lyapunov exponents of multiplicity one with respect to any $\hat{\mu}$ with local product structure.
 \end{theorem}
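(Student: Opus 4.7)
The plan is to lift $\hat\mu$ to a measure on the projective bundle $\hat\Sigma\times\R P^{d-1}$ (and more generally on flag bundles), use an invariance principle to see that any such lift has fiber disintegrations invariant under both stable and unstable holonomies, and then exploit the pinching and twisting at $(\hat p,\hat z)$ to conclude that those disintegrations are atomic in a specific way. A first reduction makes it enough to show that the top Lyapunov exponent $\lambda_1(\wedge^k\hat A,\hat\mu)$ is simple for each $k$: the passage to exterior powers turns multiplicity of a nontop exponent into multiplicity of the top exponent of some $\wedge^k\hat A$, and conditions (P) and (T) of Definition \ref{deftyp} are precisely what is needed to keep the criterion valid for every $\wedge^k\hat A$ (the eigenvalues $\prod_{j\in J}\omega_j$ remain of pairwise distinct norms, and the linear independence condition is sharp enough for all Plücker embeddings).

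Next, on the projectivized cocycle, pick an $\hat A$-invariant probability $\hat m$ projecting to $\hat\mu$ and disintegrate it as $\{m_{\hat x}\}_{\hat x\in\hat\Sigma}$ along fibers; such a lift exists by compactness. The heart of the argument is the invariance principle: if the top exponent of $\hat A$ is not simple, one can arrange that the fiber disintegration satisfies $(\phi^u_{\hat x,\hat y})_* m_{\hat x}=m_{\hat y}$ for $\hat y\in W^u_{loc}(\hat x)$ and the analogous identity for $\phi^s$. The idea is of Ledrappier--Bonatti--Viana type: when the top exponent is not simple, projectively there is no attraction along the Oseledets direction, the fiber entropy of $\hat m$ with respect to the unstable partition vanishes, and Rokhlin disintegration combined with the Hölder regularity of $\phi^u$ and the local product structure of $\hat\mu$ forces $\{m_{\hat x}\}$ to factor through both the $\phi^u$ and the $\phi^s$ quotients.

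Given holonomy invariance, one examines $m_{\hat p}$ on the fiber over the periodic point. Iterating $\hat A$ along the orbit of $\hat p$ and using the pinching hypothesis (P), the only probabilities on $\R P^{d-1}$ invariant under the hyperbolic linear map $\hat A^{\mathrm{per}}(\hat p)$ are convex combinations of the Diracs $\delta_{\omega_1},\dots,\delta_{\omega_d}$ at the eigenspaces, so $m_{\hat p}=\sum_j c_j\,\delta_{\omega_j}$. On the other hand, composing holonomies along the loop $\hat p\to\hat z\to\hat p$ through the homoclinic point, the holonomy invariance of the disintegration gives $(\psi_{\hat p,\hat z})_* m_{\hat p}=m_{\hat p}$. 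The twisting condition (T) then rules out any nontrivial convex combination being fixed: it implies that $\psi_{\hat p,\hat z}$ sends the support $\{\omega_j\}$ to a set generic enough that no equality $\sum c_j\,\delta_{\psi(\omega_j)}=\sum c_j\,\delta_{\omega_j}$ can hold with more than one $c_j$ nonzero. Thus $m_{\hat p}$ is a single Dirac mass, which matches the largest Oseledets direction and forces simplicity of the top exponent of $\hat A$; applied to every $\wedge^k\hat A$ this yields simplicity of the entire spectrum.

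The main obstacle, and the genuine novelty of this criterion compared to the classical Furstenberg--Guivarc'h--Raugi setting, is the invariance principle in the second paragraph: proving that nonsimplicity of the top exponent upgrades a mere measurable disintegration $\{m_{\hat x}\}$ to one that is continuous along holonomy orbits and honestly invariant under both $\phi^u$ and $\phi^s$. This is where the two standing hypotheses, domination (or constancy on cylinders) of $\hat A$ and local product structure of $\hat\mu$, are indispensable, and it is the step one would have to carry out most carefully in order to reconstruct Theorem \ref{bvmainthm}.
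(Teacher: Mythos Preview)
The paper does not contain a proof of this statement: Theorem~\ref{bvmainthm} is quoted verbatim as \cite[Theorem A]{av} and used as a black box in Section~\ref{sec: proof}. There is therefore nothing in the paper to compare your proposal against.

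That said, your outline is an accurate high-level sketch of the Avila--Viana argument (and its predecessor in \cite{bv}): reduce to simplicity of the top exponent via exterior powers, invoke an invariance principle to upgrade a projective lift of $\hat\mu$ to an $su$-state when the top exponent is not simple, then use pinching (P) at $\hat p$ to force the fiber measure $m_{\hat p}$ to be supported on the eigendirections and twisting (T) along the homoclinic loop $\psi_{\hat p,\hat z}$ to rule out all but a single Dirac mass. You correctly identify the invariance principle as the substantive analytic step and the place where domination and local product structure of $\hat\mu$ enter. One small caution: the passage from ``$m_{\hat p}$ is a single Dirac'' to ``the top exponent is simple'' is not quite automatic from what you wrote; in the actual proof one argues by contradiction (nonsimplicity produces an $su$-state, which is then shown to be impossible), so the Dirac conclusion is the contradiction rather than a direct construction of the Oseledets direction.
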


\subsection{Anosov Flows} The continuous-time hyperbolic systems we study are:

\begin{definition}[$C^k$-Anosov Flows] A $C^k$ ($1 \leq k \leq \infty$) flow $\Phi^t: X \to X$ on a smooth manifold $X$ is called \textit{Anosov} if it preserves a splitting $E^u \oplus E^0 \oplus E^s$ of $TX$ such that $E^0$ is the flow direction and there exist $\lambda > 0$ and $C > 1$ such that for all $v \in E^u$ and $u \in E^s$:
	$$||D\Phi^t v|| \geq Ce^{\lambda t} ||v||,\, \hspace{.3cm} ||D\Phi^{-t} u|| \geq Ce^{\lambda t} ||u||.$$
\end{definition}

A significant class of cocycles over Anosov flows related to the theory of partially hyperbolic systems and to the class of dominated cocycles over shift maps is that of \textit{fiber bunched cocycles}, whose expansion and contraction rates are dominated by the base dynamics:

\begin{definition}[Fiber Bunching] \label{fbunchdef} A $\beta$-Hölder continuous cocycle $\mathcal{A}: \mathcal{E} \times \R \to \mathcal{E}$ over an Anosov flow $\Phi^t:X \to X$ is said to be $\alpha$-fiber bunched if $\alpha \leq \beta$ and there exists $T> 0$ such that for all $p \in M$ and $t \geq T$:
	$$\|A^t_p\| \|A_p^{-t}\| \| D\Phi^t|_{E^s}\|^{\alpha} < 1, \hspace{.3cm} \|A^t_p\| \|A_p^{-t}\| \| D\Phi^{-t}|_{E^u}\|^{\alpha} < 1.$$
	
When the cocycles $D\Phi^t|_{E^{i = u,s}}$ themselves satisfy the inequalities above in place of $\cA$, the Anosov flow is said to be $\alpha$-bunched.
\end{definition}

 Fiber bunching is a partial hyperbolicity condition on the projectivization of the fiber bundle, with the fibers composing the center direction and the base system the stable and unstable directions. The strong stable and unstable manifold theorem can be interpreted as defining holonomy maps between the fibers:

\begin{theorem} \cite{kalsad} \label{holsflow}
	Suppose $\mathcal{A}$ is $\beta$-Hölder and fiber bunched over a base system as in Definition \ref{fbunchdef}. Then the cocycle admits \textit{holonomy maps} $h^u$, that is, a  continuous map $h^u: (x,y) \to h^u_{x,y}$, $x\in M$, $y \in W^u_{loc}(x)$, such that:
	\begin{enumerate}
		\item [(1)] $h^u_{x,y}$ is a linear map $\mathcal{E}_x \to \mathcal{E}_y$,
		\item [(2)] $h^u_{x,x} = Id$  and  $h^u_{y,z} \circ  h^u_{x,y} = h^u_{x,z}$,
		\item [(3)] $h^u_{x,y} = (A^t_y)^{-1} \circ h^u_{\Phi^t(x), \Phi^t(y)} \circ A^t_x$ for every $t \in \R$. 
	\end{enumerate}
	
	Moreover, the holonomy maps are unique, and, fixing a system of linear identifications $I_{xy}: \mathcal{E}_x\to \mathcal{E}_y$, see \cite{kalsad},  they satisfy:
	$$\|h^u_{x,y}- I_{x,y} \| \leq C d(x,y)^\beta.$$
	
	Using property (3), one may extend these holonomies for all $y \in W^{cu}(x)$ (as opposed to $W^u_{loc}(x)$), and such holonomies are denoted by $h^{cu}$. 
\end{theorem}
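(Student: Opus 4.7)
The plan is to construct $h^u_{x,y}$ as the limit of approximate holonomies obtained by flowing backward along $W^u$, identifying nearby fibers via $I$, and then flowing forward with $\cA$. For $y \in W^u_{loc}(x)$, I set
\begin{equation*}
h^u_{x,y} := \lim_{n \to \infty} H_n, \qquad H_n := A^n_{\Phi^{-n}(y)} \circ I_{\Phi^{-n}(x), \Phi^{-n}(y)} \circ A^{-n}_x.
\end{equation*}
With this definition, properties (1) and (3) are built in from the cocycle law, and the Hölder bound $\|h^u_{x,y} - I_{x,y}\| \leq C d(x,y)^\beta$ will fall out of the convergence rate by telescoping from $H_0 = I_{x,y}$.

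The hard step is showing $\{H_n\}$ is Cauchy; this is precisely where $\alpha$-fiber bunching enters. Applying the cocycle identities $A^{n+1}_{\Phi^{-n-1}(y)} = A^n_{\Phi^{-n}(y)} \circ A^1_{\Phi^{-n-1}(y)}$ and $A^{-n-1}_x = A^{-1}_{\Phi^{-n}(x)} \circ A^{-n}_x$ factors
\begin{equation*}
H_{n+1} - H_n = A^n_{\Phi^{-n}(y)} \circ \bigl[\, A^1_{\Phi^{-n-1}(y)} \circ I_{\Phi^{-n-1}(x), \Phi^{-n-1}(y)} \circ A^{-1}_{\Phi^{-n}(x)} - I_{\Phi^{-n}(x), \Phi^{-n}(y)}\,\bigr] \circ A^{-n}_x.
\end{equation*}
The bracket compares unit-time transport by $\cA$ with direct identification of nearby fibers, so $\beta$-Hölder continuity of $\cA$ with respect to $I$ bounds its norm by $C d(\Phi^{-n}(x), \Phi^{-n}(y))^\beta$. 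Since $y \in W^u_{loc}(x)$, the backward unstable contraction yields $d(\Phi^{-n}(x), \Phi^{-n}(y)) \leq C' \|D\Phi^{-n}|_{E^u}\| d(x,y)$, and using $\alpha \leq \beta$ one gets
\begin{equation*}
\|H_{n+1} - H_n\| \leq C'' \, \|A^n\|\, \|A^{-n}\|\, \|D\Phi^{-n}|_{E^u}\|^{\alpha} \, d(x,y)^\beta.
\end{equation*}
The $\alpha$-fiber bunching inequality forces this bound to decay geometrically along integer iterates of the time-$T$ map, furnishing both convergence and the Hölder estimate.

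The remaining properties are bookkeeping once this estimate is in place. For (2), composing the defining limits for $h^u_{x,y}$ and $h^u_{y,z}$ cancels the middle factor $A^{-n}_y \circ A^n_{\Phi^{-n}(y)} = \id$, reducing to $\lim A^n_{\Phi^{-n}(z)} \circ I_{\Phi^{-n}(y), \Phi^{-n}(z)} \circ I_{\Phi^{-n}(x), \Phi^{-n}(y)} \circ A^{-n}_x$, which matches the defining limit for $h^u_{x,z}$ up to a higher-order error killed by the same bunching bound. For uniqueness, any Hölder competitor $\tilde h$ satisfying (3) obeys $\tilde h_{x,y} - h^u_{x,y} = A^n_{\Phi^{-n}(y)} \circ (\tilde h - h^u)_{\Phi^{-n}(x), \Phi^{-n}(y)} \circ A^{-n}_x$, whose norm vanishes as $n \to \infty$ by the same product estimate. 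Finally, to extend to $y \in W^{cu}(x)$ one writes $y = \Phi^s(y')$ with $y' \in W^u(x)$, reduces to $y' \in W^u_{loc}$ by iterating property (3), and then post-composes with $A^s$; independence of the choices is forced once more by property (3).
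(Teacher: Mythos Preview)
The paper does not give its own proof of this statement: Theorem~\ref{holsflow} is quoted from \cite{kalsad} as background, with no argument supplied. There is therefore nothing in the paper to compare against beyond the citation itself.

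That said, your proposal is the standard construction used in \cite{kalsad} and in earlier work such as \cite{bv,bgv}: define $h^u_{x,y}$ as the limit of $A^n_{\Phi^{-n}(y)} \circ I_{\Phi^{-n}(x),\Phi^{-n}(y)} \circ A^{-n}_x$, telescope, and use fiber bunching to make the increments summable. The outline is correct and matches the literature. One small point worth tightening: in your Cauchy estimate the factors $\|A^n\|$ and $\|A^{-n}\|$ are evaluated at the nearby but distinct basepoints $\Phi^{-n}(y)$ and $\Phi^{-n}(x)$, whereas the bunching inequality in Definition~\ref{fbunchdef} is stated at a single point $p$. Bridging this requires either the $\beta$-H\"older continuity of $\cA$ to compare $A^1$ at nearby points along the orbit, or simply invoking uniformity of the bunching constant over the compact base; either is routine, but as written the inequality does not literally apply.
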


For the case where $\Phi$ is itself $\alpha$-bunched, it is known that \cite{hass} the bunching constant is directly related to the regularity of the Anosov splitting: for $\frac{1}{2}$-bunched Anosov flows, the weak stable and unstable bundles $E^{cu,cs} := E^0 \oplus E^{u,s}$ are of class $C^1$. Thus:

\begin{proposition} \label{bunchingquotient}
	For $\Phi^t: X \to X$ a $\frac{1}{2}$-bunched $C^2$-Anosov flow, the cocycle $\cA^u$ (resp. $\cA^s$) on the bundle $Q^u:= E^{cu}/E^0$ (resp. $Q^s:=E^{cs}/E^0$)  given by the derivative $D\Phi^t$ is $1$-bunched. 
\end{proposition}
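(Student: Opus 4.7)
The plan is to verify that the cocycle $\cA^u$ is $1$-Hölder continuous and satisfies the two $1$-fiber bunching inequalities from Definition \ref{fbunchdef}; the argument for $\cA^s$ will be entirely symmetric. First I would settle regularity. The $\frac{1}{2}$-bunching hypothesis, together with the result of \cite{hass} cited in the text, gives that the weak unstable bundle $E^{cu}$ is of class $C^1$. Since $E^0$ is smooth (being spanned by the $C^k$ generating vector field with $k \geq 2$), the quotient $Q^u = E^{cu}/E^0$ is a $C^1$ bundle, and the descent $\cA^u$ of the $C^1$ bundle map $D\Phi^t|_{E^{cu}}$ is likewise $C^1$ and in particular Lipschitz. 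Thus the Hölder regularity hypothesis $\alpha \leq \beta$ of Definition \ref{fbunchdef} is met with $\alpha = 1$.

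Next I would reduce the norms of $\cA^u$ on $Q^u$ to those of $D\Phi^t|_{E^u}$. The composition $E^u \hookrightarrow E^{cu} \twoheadrightarrow Q^u$ is a continuous bundle isomorphism conjugating $D\Phi^t|_{E^u}$ with $\cA^u$. Equipping $Q^u$ with the quotient norm induced from the ambient Riemannian metric on $TX$, a direct calculation shows that this quotient norm and the Riemannian restriction to $E^u$ differ by at most a bounded factor: the ratio is controlled by $\sin \theta_0$, where $\theta_0 > 0$ is a uniform lower bound on the angle between the continuous subbundles $E^0$ and $E^u$, which exists by compactness of $X$. This yields a constant $C > 0$ such that
\[
\|\cA^u_p(t)\| \leq C\,\|D\Phi^t|_{E^u_p}\|, \qquad \|\cA^u_p(-t)\| \leq C\,\|D\Phi^{-t}|_{E^u_p}\|,
\]
uniformly in $p \in X$ and $t \in \R$.

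The key algebraic step is to upgrade $\frac{1}{2}$-bunching of the flow to $1$-bunching of $\cA^u$. By hypothesis, for $t$ beyond some threshold and uniformly in $p$,
\[
\|D\Phi^t|_{E^u}\|\,\|D\Phi^{-t}|_{E^u}\|\,\|D\Phi^t|_{E^s}\|^{1/2} < 1,
\]
so multiplying both sides by $\|D\Phi^t|_{E^s}\|^{1/2}$ produces
\[
\|D\Phi^t|_{E^u}\|\,\|D\Phi^{-t}|_{E^u}\|\,\|D\Phi^t|_{E^s}\| < \|D\Phi^t|_{E^s}\|^{1/2}.
\]
By uniform exponential contraction of $E^s$, the right-hand side decays exponentially in $t$, so for $t$ sufficiently large it is bounded above by $C^{-2}$; combined with the norm comparison this gives the first $1$-fiber bunching inequality for $\cA^u$. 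The same trick applied to the other $\frac{1}{2}$-bunching inequality of the flow (the one with factor $\|D\Phi^{-t}|_{E^u}\|^{1/2}$) yields the second inequality, and the argument for $\cA^s$ is obtained by exchanging $E^{cu}, E^u$ with $E^{cs}, E^s$. The main obstacle is only bookkeeping: ensuring the threshold $T$ and all intermediate constants are uniform in $p \in X$, which is standard given compactness of $X$ and the uniform hyperbolicity of $\Phi$.
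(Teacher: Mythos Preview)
Your proposal is correct and follows the same approach as the paper's proof, just with considerably more detail. The paper's two-sentence argument asserts $C^1$ regularity of $\cA^u$ via \cite{hass} and then states that the fiber-bunching inequalities with $\alpha=1$ hold ``by hypothesis''; you unpack both claims, in particular supplying the norm comparison between $\cA^u$ on $Q^u$ and $D\Phi^t|_{E^u}$ and the observation that the $\tfrac12$-bunching inequality automatically upgrades to the $1$-bunching inequality because $\|D\Phi^t|_{E^s}\|^{1/2}\to 0$, which is exactly the content hidden in the paper's ``by hypothesis''.
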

\begin{proof} The cocycle $D\Phi^t|_{E^{cu}/E^0}$ is $C^1$ by the regularity of the splitting mentioned above, and by hypothesis $D\Phi^t|_{E^{cu}/E^0}$ satisfies the inequalities in the definition of fiber bunching with $\alpha = 1$. Same for $E^{cs}$
\end{proof}

Finally, we describe the class of measures with respect to which we prove our results. Fix a topologically mixing $C^2$-Anosov flow $\Phi^t$. Let  Let $\rho: X \to \R$ be a Hölder-continuous function, which we refer to as a \textit{potential}. Then an \textit{equilibrium state} $\mu_\rho$ of $\rho$ is an invariant measure satisfying the variational principle:
$$ h_{\mu_\rho}(\Phi) + \int \rho \, d\mu_\rho = \sup_{\mu \in \mathcal{M}_\Phi(X)} h_{\mu}(\Phi) + \int \rho \, d\mu,$$
where $h_\mu(\Phi)$ is the measure-theoretic entropy of $\Phi$ with respect to $\mu$ and $ \mathcal{M}_\Phi(X)$ is the set of invariant measures of $\Phi$. The existence and uniqueness of the equilibrium state $\mu_\rho$, is, in this setting, a foundational result in the theory of the thermodynamical formalism \cite{bow}. 

Important examples of equilibrium states include the case $\rho = 0$, which gives the measure of maximal entropy as the equilibrium state, and $\rho(x) = - \frac{d}{dt} \log J^u(x,t)|_{t=0}$, where $J^u(x,t) = \det {D_x\Phi^t|_{E^u}}$, which gives the SRB measure. Moreover, the product structure property mentioned in the previous section is also a classical result for equilibrium states proved in \cite{bow}.

\subsection{Rotation Numbers} \label{ssec:rotation} As indicated in the introduction, in order to perturb away complex eigenvalues by a small rotation, one needs the formalism of rotation numbers, which we introduce in complete form here. We roughly follow the discussion in Section 3 of \cite{gourm}.

As a brief introduction, recall that for an orientation preserving homeomorphism of the circle $f: S^1 \to S^1$, the \textit{Poincaré rotation number} $\rho(f) \in S^1  = \R/2\pi$ of $f$ is defined as:
$$ \rho(f) = \lim_{n \to \infty} \frac{\tilde{f}^n(x)-x}{n} \text{ (mod }2\pi),$$ 
for a lift $\tilde{f}: \R \to \R$ of $f$. This limit always exists and is independent of the choice of $x \in \R$ and the lift $\tilde{f}$. For an orientation reversing homeomorphism we define $\rho(f) = 0$. 

The Poincaré rotation number measures, on average, how much an element is rotated by an application of $f$ and is a conjugation invariant, i.e., $\rho(g^{-1}f g) = \rho(f)$, for $g$ also a homeomorphism of $S^1$. In what follows we extend this definition for cocycles on circle bundles.

Throughout this section, let $X$ a compact metric space and $\Phi^t$ a continuous flow on $X$. Let $\cM_\Phi(X)$ be the space of probability measures on $X$ invariant under $\Phi^t$ with the weak-* topology. 

For our purposes, it will suffice to work with trivial bundles $E = X \times S^1$, and a continuous cocycle ${\cA}: \R \times E \to E$ over $\Phi^t$. Then for $(x, \theta) \in E$, the map $t \mapsto A^t_x(\theta)$ is a continuous map from $\R \to S^1$, so it may lifted to some $w_{x,\theta}: \R \to \R$. Let $\tilde{w}_{x,\theta}(t) :=  w_{x,\theta}(t) - w_{x,\theta}(0)$, so that $\tilde{w}$ does not depend on the lift $w$.

\begin{definition} [Pointwise Rotation Number]
	The average rotation number $\rho: X \to \R$ is defined by the limit:
	$$\rho(x) = \lim_{t \to \infty} \frac{\tilde{w}_{x,\theta}(t)}{t} ,$$
	whenever it exists, and is independent of the choice of $\theta$. 
\end{definition}

Indeed, for any $\theta, \theta' \in S^1$, we have $|\tilde{w}_{x,\theta}(t) - \tilde{w}_{x,\theta'}(t)| < 2\pi$ for any $t$ so the limit does not depend on choice of $\theta \in S^1$. 

Now define $\sigma: X \times \R \to \R$ and $\tau: X \times \R \to \R$ by:
$$\sigma^t(x) := \sup_{\theta \in S^1} \tilde{w}_{x,\theta}$$ $$\tau^t(x) := \inf_{\theta \in S^1} \tilde{w}_{x,\theta},$$
which, by continuity of $\cA$ are evidently continuous in $t$ and in $x$. Moreover, by the cocyle equation for $\cA$ it is clear that $\sigma$ is subadditive and $\tau$ is superadditive. 

By Kingman's subadditive ergodic theorem for flows, for any $\mu \in \cM_\Phi(X)$:
\begin{itemize}
	\item [(1)] The sequence $\frac{1}{t}\sigma^t$ converges $\mu$-a.e. to a $\Phi$ invariant map, which agrees with $\rho$. 
	\item [(2)] We may compute the integral of $\rho$ by:
	\begin{equation} \label{eq:subad} \rho_\mu: = \int \rho \, d\mu = \inf_{t > 0} \frac{1}{t} \int \sigma^t \,d\mu. \end{equation}
\end{itemize}

The discussion above then implies:

\begin{theorem} \label{rotationnumberiscontinuous}
	The map $\cM_\Phi(X) \to \R$ given by $\mu \mapsto \rho_\mu$ is continuous.
\end{theorem}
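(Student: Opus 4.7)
The plan is to exhibit $\mu \mapsto \rho_\mu$ as simultaneously an infimum and a supremum of weak-$*$ continuous functionals on $\mathcal{M}_\Phi(X)$; hence it is both upper and lower semi-continuous, and therefore continuous.

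First I would unpack (\ref{eq:subad}). For each fixed $t > 0$, the function $\sigma^t : X \to \R$ is continuous (by continuity of $\cA$ and compactness of $S^1$), and bounded on the compact space $X$. Thus the functional $\mu \mapsto \frac{1}{t}\int \sigma^t \, d\mu$ is continuous in the weak-$*$ topology on $\mathcal{M}_\Phi(X)$. Since (\ref{eq:subad}) writes $\rho_\mu$ as the pointwise infimum of this family of continuous functionals over $t > 0$, the map $\mu \mapsto \rho_\mu$ is upper semi-continuous on $\mathcal{M}_\Phi(X)$.

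For the matching lower bound I would invoke Kingman's subadditive ergodic theorem in its superadditive form applied to the family $\tau^t$: it is continuous in $(t, x)$, superadditive in $t$, and bounded by $\sup \|\cA\|$-type estimates, so for every $\mu \in \mathcal{M}_\Phi(X)$ the sequence $\frac{1}{t}\tau^t$ converges $\mu$-a.e.\ to a $\Phi$-invariant limit $\rho^\tau$, and $\int \rho^\tau \, d\mu = \sup_{t > 0} \frac{1}{t}\int \tau^t \, d\mu$. The key observation, which I expect to be the only subtle point, is the uniform estimate
$$0 \;\leq\; \sigma^t(x) - \tau^t(x) \;\leq\; 2\pi \qquad \text{for all } x \in X, \, t \in \R,$$
which is immediate from the fact noted in the excerpt that $|\tilde{w}_{x,\theta}(t) - \tilde{w}_{x,\theta'}(t)| < 2\pi$ for every $\theta,\theta' \in S^1$. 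Dividing by $t$ and sending $t \to \infty$ forces $\rho^\tau = \rho$ $\mu$-almost everywhere, so $\rho_\mu = \sup_{t>0}\frac{1}{t}\int \tau^t\, d\mu$, which expresses $\rho_\mu$ as a supremum of weak-$*$ continuous functionals and gives lower semi-continuity.

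Combining the two steps, $\mu \mapsto \rho_\mu$ is continuous. The only potential obstacle is verifying the superadditive version of Kingman's theorem is applicable to $\tau^t$ in the flow setting, which is standard given the continuity of $\tau^t$ and the uniform bound $|\tau^t| \leq |\sigma^t| + 2\pi$; all other steps are routine manipulations of semi-continuity.
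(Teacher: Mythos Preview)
Your proof is correct and follows essentially the same approach as the paper: express $\rho_\mu$ as an infimum (via $\sigma^t$) and as a supremum (via $\tau^t$) of weak-$*$ continuous functionals to obtain upper and lower semicontinuity. You are in fact more explicit than the paper in justifying that the $\tau$-limit coincides with $\rho$ via the uniform bound $|\sigma^t-\tau^t|\leq 2\pi$, whereas the paper simply invokes ``the analogous equation for $\tau$''.
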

\begin{proof}
	Note that by compactness of $X$ and continuity of $\sigma^t:X \to \R$, the map  $\mu \to \int \sigma^t \,d\mu$ is continuous, and hence by:
	$$\int \rho \, d\mu = \inf_{t > 0} \frac{1}{t} \int \sigma^t \,d\mu 
	$$
	and the analogous equation for $\tau$, we obtain upper and lower semicontinuity of $\mu \mapsto \rho_\mu$.
\end{proof}

\begin{remark} \label{closedrotationdefinition}
	When $\mu$ is supported on a periodic orbit $\Or$, we will often write $\rho_\Or$ for $\rho_\mu$.
\end{remark}

Next we consider perturbations of cocycles over a fixed base flow. The space of cocycles $\cC_\Phi$ over the same $\Phi$ has a $C^0$-topology of uniform convergence defined by the property that $\cA_n \to \cA$ if for each $x \in X$ and $|t| <1 $ the maps $(A_n)_x^t \to A^t_x$ in $C^0(S^1, S^1)$ uniformly. 

Associated to the cocycles $\cA$ are rotation numbers $\rho_\mu(\cA)$ for invariant measures $\mu$ defined by Equation (\ref{eq:subad}). Then:

\begin{proposition} \label{contcocy}
	For a $\mu \in \cM_\Phi(X)$, the map $\cC_\Phi \to \R$ given by
	$$\cA \mapsto \rho_\mu(\cA)$$
	is continuous.
\end{proposition}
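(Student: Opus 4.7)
The plan is to exploit the two variational formulas for $\rho_\mu$ that come out of Kingman's subadditive theorem applied to both $\sigma^t$ (subadditive) and $\tau^t$ (superadditive):
\[
\rho_\mu(\cA) \;=\; \inf_{t>0}\,\frac{1}{t}\!\int \sigma^t_\cA\,d\mu \;=\; \sup_{t>0}\,\frac{1}{t}\!\int \tau^t_\cA\,d\mu.
\]
If for every fixed $t>0$ the maps $\cA\mapsto \int\sigma^t_\cA\,d\mu$ and $\cA\mapsto \int\tau^t_\cA\,d\mu$ are continuous on $\cC_\Phi$, then the first formula presents $\rho_\mu$ as a pointwise infimum of continuous functions, hence upper semicontinuous, and the second presents it as a supremum of continuous functions, hence lower semicontinuous. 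Both together give continuity.

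So the key step is to show, for each fixed $T>0$, that the map $\cA\mapsto \sigma^T_\cA\in C^0(X)$ is continuous in the uniform norm (the argument for $\tau^T$ is identical). First I would upgrade the defining $C^0$-convergence $\cA_n\to\cA$ on $|t|\le 1$ to uniform convergence on arbitrary compact time intervals: by the cocycle identity $A^{t}_{n,x} = A^{s}_{n,\Phi^{k}x}\circ A^{1}_{n,\Phi^{k-1}x}\circ\cdots\circ A^{1}_{n,x}$ for $t=k+s$, $0\le s<1$, and since compositions of uniformly convergent sequences of equicontinuous circle maps converge uniformly, one obtains that $A^t_{n,x}\to A^t_x$ uniformly in $(x,t)\in X\times[0,T]$ as self-maps of $S^1$. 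Next, for each fixed $(x,\theta)$, the path $t\mapsto A^t_{n,x}(\theta)$ in $S^1$ converges uniformly on $[0,T]$ to $t\mapsto A^t_x(\theta)$, and both start at $\theta$ when $t=0$. A basic covering space argument then gives that the canonical lifts starting at $\theta$ satisfy
\[
\bigl|\tilde w_{\cA_n,x,\theta}(t) - \tilde w_{\cA,x,\theta}(t)\bigr|\xrightarrow[n\to\infty]{} 0 \quad\text{uniformly in } (x,\theta,t)\in X\times S^1\times[0,T].
\]
Taking the supremum over $\theta$ yields $\sigma^T_{\cA_n}\to \sigma^T_\cA$ uniformly on $X$; integrating against $\mu$ gives the continuity of $\cA\mapsto \int\sigma^T_\cA\,d\mu$, and symmetrically for $\tau^T$.

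Combining, the USC/LSC $\epsilon$-argument concludes: given $\epsilon>0$, choose $t_0$ with $\frac{1}{t_0}\int\sigma^{t_0}_\cA\,d\mu<\rho_\mu(\cA)+\epsilon/2$; then for $\cA'$ sufficiently $C^0$-close to $\cA$ one has $\frac{1}{t_0}\int\sigma^{t_0}_{\cA'}\,d\mu<\rho_\mu(\cA)+\epsilon$, hence $\rho_\mu(\cA')\le\rho_\mu(\cA)+\epsilon$; the analogous argument with $\tau^{t_1}$ controls $\rho_\mu(\cA')$ from below. The one point that takes genuine care is the lift step: the quantity $\tilde w_{\cA,x,\theta}(t)$ depends on the whole path $s\mapsto A^s_x(\theta)$, not just on its endpoint, and the $C^0$-topology on $\cC_\Phi$ was only defined for $|t|\le 1$; the cocycle-identity extension to $[0,T]$ and the uniformity of the lift over $x$ and $\theta$ (using compactness of $X\times S^1$) are where the actual work of the proof lies.
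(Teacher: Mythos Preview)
Your proof is correct and follows essentially the same approach as the paper: use the subadditive/superadditive variational formulas $\rho_\mu(\cA)=\inf_{t>0}\frac{1}{t}\int\sigma^t_\cA\,d\mu=\sup_{t>0}\frac{1}{t}\int\tau^t_\cA\,d\mu$ together with continuity of $\cA\mapsto\int\sigma^t_\cA\,d\mu$ for each fixed $t$ to obtain upper and lower semicontinuity. The paper simply invokes ``continuity of $\cA\mapsto\int\sigma^t(\cA)\,d\mu$'' without further comment, whereas you have correctly identified and spelled out the nontrivial point hidden there---extending the $|t|\le 1$ convergence to arbitrary compact time intervals via the cocycle identity, and then controlling the lifts $\tilde w$ uniformly in $(x,\theta)$---so your write-up is in fact more complete than the paper's.
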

\begin{proof}
	The proof is nearly identical to that of Theorem \ref{rotationnumberiscontinuous}. Namely, one uses continuity of $\cA\mapsto \int \sigma^t(\cA) \, d\mu$ and the subadditive ergodic theorems.
\end{proof}

Now we specialize to the case where $X = \Or$ is a hyperbolic periodic orbit of a $C^1$ flow $\Phi_0$ on a Riemannian manifold $N$, which will be $N = SM$ with the Sasaki metric in the setting of this paper. We are interested in how $\rho_\Or$ varies as the flow $\Phi$ varies, for the derivative cocycle on certain circle bundles. 

By structural stability of the hyperbolic set $\Or$ there exists $\cU$ a $C^1$-neighborhood of $\Phi_0$ and a continuous $h: \cU \times \Or \to N$ such that the maps $h_{\Phi}(x) := h(\Phi, x)$ are $C^1$-diffeomorphisms onto their images, and $\Or_\Phi: = h_{\Phi}(\Or)$ is a closed orbit of $\Phi$. Moreover, since the maps $h_{\Phi}$ are $C^1$ there exists a continuous $\kappa: \cU  \times \Or \times \R\to \R$ such that $\kappa_{\Phi}(x, t) := \kappa(\Phi, x, t)$ is $C^1$ and the flow $\tilde{\Phi}$ (defined on $\Or_\Phi$) given by:	
$$\tilde{\Phi}^t(x) = {\Phi}^{\kappa_{\Phi}(h_\Phi(x),t)}(x),$$
is in fact conjugated to $\Phi_0$ by $h_{\Phi}$, i.e., $h_\Phi \circ \Phi_0 = \tilde{\Phi} \circ h_\Phi$. . 

For any bundle $E$, we write $\cP E$ for its projectivization. Let $F_0$ be a 2-dimensional trivial subbundle of $TN|_\Or$ which is part of a dominated splitting $E_0 \oplus_\leq F_0 \oplus _\leq G_0$ of $TN|_\Or$. The derivative cocycle $D\Phi$ on $\P F_0$ then is a cocyle on a trivial $S^1$ bundle, and it has a rotation number $\rho_\Or$ as before. 

Assuming $\cU$ is taken sufficiently small, by persistence of dominated splittings for each $\Phi \in \cU$ there is a splitting $TN|_{\Or_\Phi} = E_\Phi \oplus_\leq F_\Phi \oplus _\leq G_\Phi$ for $\Phi$ and the bundle $F_\Phi$ is trivial. Moreover, the splitting is also dominated for the flow $\tilde{\Phi}$, which is simply a time change of $\Phi$. Hence, $D\tilde{\Phi}$ and $D{\Phi}$ on $\P F_\Phi$ also have well defined rotation numbers $\rho_{\Or_{\tilde{\Phi}}}$, $\rho_{\Or_{\Phi}}$, which satisfy the relation:
$$\rho_{\Or_{\tilde{\Phi}}} \ell(\Or_\Phi) = \rho_{\Or_{\Phi}} \ell(\Or),$$
as they differ by a time change.

With all the objects defined, we now state the continuity with respect to the parameters: 

\begin{proposition} \label{rotcont2}
	The map $\cU \to \R$ given by 
	$$ \Phi \mapsto \rho_{\Or_\Phi},$$
	is continuous in some open $\cV \subseteq \cU$ containing $\Phi_0$.
\end{proposition}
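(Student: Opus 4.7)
The plan is to reduce to Proposition \ref{contcocy} by conjugating every perturbed cocycle back to a single base flow $\Phi_0$ on the fixed orbit $\Or$. First I would use the continuous family of $C^1$-diffeomorphisms $h_\Phi: \Or \to \Or_\Phi$ from structural stability to transport the derivative cocycle $D\tilde\Phi$ on $\P F_\Phi$ to a cocycle $\cA_\Phi$ on $\P F_0$ over the fixed flow $\Phi_0$. Concretely, after fixing trivializations $F_0 \cong \Or \times \R^2$ and $F_\Phi \cong \Or_\Phi \times \R^2$, persistence of the dominated splitting supplies a continuous family of bundle isomorphisms $\Psi_\Phi: F_0 \to F_\Phi$ covering $h_\Phi$ which reduces to the identity at $\Phi = \Phi_0$ (e.g., by composing $Dh_\Phi$ with the projection onto $F_\Phi$ along $E_\Phi \oplus G_\Phi$, which is a small perturbation of the identity for $\Phi$ close to $\Phi_0$). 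Define $\cA_\Phi := \P\Psi_\Phi^{-1} \circ D\tilde\Phi \circ \P\Psi_\Phi$, so $\cA_\Phi \in \cC_{\Phi_0}$. Since $\tilde\Phi$ is conjugated to $\Phi_0$ by $h_\Phi$ and rotation numbers are conjugacy invariants, $\rho_{\mu_0}(\cA_\Phi) = \rho_{\Or_{\tilde\Phi}}$, where $\mu_0$ is the unique $\Phi_0$-invariant probability measure on $\Or$.

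Next I would verify that the map $\Phi \mapsto \cA_\Phi$ is continuous from a sufficiently small neighborhood $\cV \subseteq \cU$ of $\Phi_0$ into $\cC_{\Phi_0}$ with the $C^0$-topology of uniform convergence for $|t| \leq 1$. This assembles four continuity statements: (a) the continuity of $\Phi \mapsto h_\Phi$ in the $C^1$-topology, (b) the continuity of $\Phi \mapsto F_\Phi$ as a subbundle along $\Or_\Phi$ (equivalently, continuity of $\Phi \mapsto \Psi_\Phi$) which follows from the standard graph-transform construction of persistent dominated splittings, (c) the continuity of the reparametrization $\kappa_\Phi$ in $\Phi$, and (d) the continuity of $(\Phi, x, t) \mapsto D_x\Phi^t$ on $\cU \times N \times [-1,1]$ for $C^1$ vector fields. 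Granted $\Phi \mapsto \cA_\Phi$ is continuous, Proposition \ref{contcocy} (applied to the fixed base $\Phi_0$ and the fixed measure $\mu_0$) yields continuity of $\Phi \mapsto \rho_{\Or_{\tilde\Phi}}$ on $\cV$.

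Finally, to recover continuity of $\rho_{\Or_\Phi}$ itself, I would invoke the time-change relation
\[
\rho_{\Or_{\tilde\Phi}}\,\ell(\Or_\Phi) = \rho_{\Or_\Phi}\,\ell(\Or)
\]
stated just before the proposition, together with the continuous dependence of the period $\ell(\Or_\Phi)$ on $\Phi$, which is a standard byproduct of structural stability of the hyperbolic orbit. Dividing by the (nonzero, continuous) factor $\ell(\Or)$ gives the desired continuity. The main obstacle I expect is step (b): ensuring that the perturbed 2-plane field $F_\Phi$, which a priori lives over the perturbed orbit $\Or_\Phi$, depends continuously on $\Phi$ when pulled back to $\Or$ via $h_\Phi$ so that $\Psi_\Phi$ is genuinely a continuous family of bundle maps over a fixed base; this requires verifying that the persistent splitting, usually constructed by a fixed-point argument on a space of sections, varies continuously with the parameter $\Phi$ in the $C^1$-topology, which is routine but somewhat technical.
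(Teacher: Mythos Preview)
Your overall strategy is exactly the paper's: pull the cocycles $D\tilde\Phi$ on $\P F_\Phi$ back to cocycles on $\P F_0$ over the fixed base $\Phi_0$ via a continuous family of bundle isomorphisms covering $h_\Phi$, invoke Proposition~\ref{contcocy} for $\mu_0$, and then convert $\rho_{\Or_{\tilde\Phi}}$ to $\rho_{\Or_\Phi}$ using the time-change identity and continuity of periods.

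The one place where your concrete suggestion does not work as written is your construction of $\Psi_\Phi$ in step~(b). You propose taking $Dh_\Phi$ followed by projection onto $F_\Phi$ along $E_\Phi\oplus G_\Phi$, but $h_\Phi$ is only a $C^1$-embedding of the one-dimensional orbit $\Or$ into $N$; hence $Dh_\Phi$ is defined only on $T\Or$ (the flow direction) and carries no information about the transverse $2$-plane $F_0\subset TN|_\Or$. The paper sidesteps this by a purely Riemannian construction that needs no transverse derivative of $h_\Phi$: for each $x\in\Or$ it parallel-transports the fiber $F_\Phi(h_\Phi(x))$ back to $x$ along the unique short geodesic from $h_\Phi(x)$ to $x$, obtaining a $2$-plane $F'_\Phi(x)\subset T_xN$ which, for $\Phi$ close enough to $\Phi_0$, is a graph over $F_0(x)$ and hence is identified with it by orthogonal projection. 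This yields the continuous family $Th_\Phi:F_0\to F_\Phi$ of fiberwise isomorphisms covering $h_\Phi$. With that replacement for $\Psi_\Phi$, the rest of your outline (points (a), (c), (d), conjugacy invariance of rotation numbers, and the final period argument) goes through verbatim and coincides with the paper's proof.
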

\begin{proof}
First, we would like to consider all cocycles $D\tilde{\Phi}$ constructed on $F_\Phi$ as existing on the same bundle over the same base map. 

For $x \in \Or$ there exists a unique length-minimizing geodesic segment (from the Riemannian structure on $N$) from $x$ to $h_\Phi(x)$, as long as $h_\Phi$ is close to the identity, which may be ensured by passing to some $\cV \subseteq \cU$ further if needed. By parallel transport of the bundle $F_\Phi$ over $\Or_\Phi$ along such segments, one then obtains a $2$-dimensional trivial bundle $F_\Phi'$ over $\Or$. By shrinking $\cV$ further if needed, the bundle $F_\Phi'$ obtained is a given by a graph over $F_0$  with respect to the fixed Riemannian metric on $N$, and hence by orthogonal projection they may be identified. 

Since all maps above are continuous, the construction above describes a continuous map $Th: \cU \times F_0 \to TN$, so that $Th_\Phi(\cdot) := Th(\Phi, \cdot)$ are bundle isomorphisms $F_0 \to F_\Phi$ fibering over $h_\Phi$. Hence, conjugating by $Th_\Phi$ we may regard $D\tilde{\Phi}$ on $F_\Phi$ as a cocyle on $F_0$ over $\Phi_0$. By continuous dependence on $\Phi$, this defines a continuous map $\Phi \to D\tilde{\Phi}$, where $D\tilde{\Phi}$ are now regarded as elements of the space of cocycles over $\Phi_0$ on $F_0$ with the $C^0$-topology. 

Since all rotation numbers $\rho_{\Or_{\tilde{\Phi}}}$ defined previously are preserved by conjugation, it suffices to check continuity of the rotation numbers of the conjugated cocycles, which is given by Proposition \ref{contcocy}. Thus the map $\Phi \mapsto\rho_{\Or_{\tilde{\Phi}}}$ is continuous, and finally since 
$$\rho_{\Or_{\tilde{\Phi}}} \ell(\Or_\Phi) = \rho_{\Or_{\Phi}} \ell(\Or),$$
and the periods vary continuously, the map $\Phi \mapsto \rho_{\Or_\Phi}$ is continuous as well.
\end{proof}

\subsection{Geodesic Flows} Let $M$ be a smooth closed manifold. Since twe consider varying Riemannian metrics, it is useful to work on the sphere bundle over $M$ of oriented directions of the tangent space, which we denote by $SM$, rather than on the unit tangent bundle. When a metric $g$ is fixed, $T^1_g M$ is canonically diffeomorphic to $SM$, and one can pullback the Sasaki metric from $T^1_gM$  to $SM$. 

Recall that for $3 \leq k \leq \infty$ we denote by $\mathcal{G}^k$ the set of $C^{k}$-Riemannian metrics on $M$ with sectional curvatures $1 \leq -K < 4$. The geodesic flow on the unit tangent bundle of a negatively curved Riemannian manifold is an Anosov flow with the horospherical foliations corresponding to the stable and unstable foliations; moreover, under the pinching condition above it is a $\frac{1}{2}$-bunched (see Anosov flows section) Anosov flow \cite[Theorem 3.2.17]{k}. In particular, the bundles $E^{cu, cs}$ are $C^1$, and, since the flow is contact and the kernel of the $C^{k-1}$ contact form equals $E^u \oplus E^s$, in fact $E^u \oplus E^0 \oplus E^s$ is a (at least) $C^1$ Anosov spliting.

We describe now the perturbational results of \cite{klta} that will be used to perturb the derivative cocycle by perturbing the metric. For a fixed embedded compact interval or closed loop  $\gamma \subseteq SM$, the set of metrics for which $\gamma$ is an orbit segment of the geodesic flow is denoted by $\mathcal{G}_\gamma^k \subseteq \mathcal{G}^{k}$.  For a fixed $g_0 \in \G_\gamma^k$, pick local hypersurfaces $\Sigma_0$ and $\Sigma_1$ in $SM$ that are transverse to $\dot{\gamma}(t) \in TSM$ at $t = 0$ and $t = 1$, respectively. This allows us to define a Poincaré map $$P_{g_0} : \Sigma_0 \supseteq U \to \Sigma_1,$$where $U$ is a neighborhood of $\gamma(0)$, by mapping $\xi \in U$ to $\varphi_{g_0}^{t_1}(\xi)$, where $t_1$ is the smallest positive time such that $\varphi_{g_0}^{t_1}(\xi) \in \Sigma_1$. By the Implicit Function Theorem and the fact that $\varphi_{g_0}^{t}$ is $C^{k-1}$, the map $P$ is $C^{k-1}$.  

By projecting the tangent spaces of $\Sigma_{i= 0,1}$ to $E^u \oplus E^s$ one may give $\Sigma_{i =0,1}$ a symplectic structure which is preserved by the Poincaré map, since the symplectic form is invariant by the geodesic flow \cite{klta}. With $g_0$ fixed, we let $\G_{g_0, \gamma}^k \subseteq \G_\gamma^k$ be the set of metrics such that $\pi(\gamma(0)), \pi(\gamma(1)) \notin (g - g_0)$ ($\pi:SM \to M$ is the canonical projection map) that is, metrics unperturbed at the ends of the fixed geodesic segment $\gamma$ relative to $g_0$.

We will repeatedly use the main result on generic metrics established by Klingenberg and Takens in \cite{klta} to perturb the metric $g_0$:

\begin{theorem} \label{ktmain} \cite[Theorem 2]{klta} Suppose $g_0 \in \G^\infty_\gamma$, and let $Q$ be some open dense subset of the space of $(k-1)$-jets of symplectic maps $(\Sigma_0, \gamma(0)) \to (\Sigma_1, \gamma(1))$.
	
	Then there is arbitrarily $C^k$-close to $g_0$ a $g' \in \G_{ g_0, \gamma}^k$ such that $P_{g'} \in Q$, where $P_{g'}: (\Sigma_0, \gamma(0)) \to (\Sigma_1, \gamma(1))$ is the Poincaré map for the geodesic flow of $g'$.
\end{theorem}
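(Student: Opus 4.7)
My plan is to follow the classical Klingenberg--Takens strategy, showing that metric perturbations supported strictly in the interior of $\gamma$ can realize any prescribed perturbation of the $(k-1)$-jet of the Poincaré map, modulo the symplectic constraint. Once such an openness/surjectivity statement is established, density of $Q$ forces a perturbation into $Q$.

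\textbf{Step 1 (Support-controlled perturbations).} I would pick a small closed subinterval $J \subset (0,1)$ such that $\pi(\gamma(J)) \subset M$ lies in an open set $U \subset M$ whose closure is disjoint from $\pi(\gamma(0))$ and $\pi(\gamma(1))$. Consider symmetric $2$-tensors $h$ of class $C^k$ supported in $U$, and put $g_h := g_0 + h$. For $h$ sufficiently $C^k$-small, $g_h \in \mathcal{G}^k_{g_0,\gamma}$ and the $g_h$-geodesic through $\gamma(0)$ with the same initial velocity still meets $\Sigma_1$ transversally near $\gamma(1)$, so the Poincaré map $P_{g_h} : (\Sigma_0,\gamma(0)) \to (\Sigma_1,\gamma(1))$ is well-defined and depends $C^{k-1}$-continuously on $h$. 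Passing to smooth approximations of $h$ shows it is harmless to consider only smooth $h$, so that $g_h \in \mathcal{G}^\infty_{g_0,\gamma}$.

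\textbf{Step 2 (Jet surjectivity).} Let $\mathcal{J}$ denote the finite-dimensional manifold of $(k-1)$-jets of symplectic diffeomorphisms $(\Sigma_0,\gamma(0)) \to (\Sigma_1,\gamma(1))$ and let $\Psi(h) := j^{k-1}_{\gamma(0)} P_{g_h} \in \mathcal{J}$. The key assertion is that the linearization $D\Psi|_{h=0}$ maps the space of admissible perturbations $h$ onto $T_{\Psi(0)}\mathcal{J}$. To see this, work in Fermi coordinates along $\gamma$ over the region $\pi(\gamma(J))$: the linearized Poincaré map is determined by integrating the Jacobi equation along $\gamma$, whose coefficients are polynomials in the metric and its transverse derivatives, and higher-order jets of $P_{g_h}$ are governed by inhomogeneous variational equations whose inhomogeneities are polynomials in the metric, its derivatives, and lower-order jets of the flow. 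By choosing explicit ``bump'' perturbations $h_\alpha$ prescribing independent transverse derivatives of the metric at chosen interior points of $\gamma$, one realizes a spanning family of directions in $T_{\Psi(0)}\mathcal{J}$. Symplecticity of $P_{g_h}$ is automatic (variational invariance of the symplectic form by the geodesic flow), so the image of $D\Psi$ lies in and surjects onto $T_{\Psi(0)}\mathcal{J}$.

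\textbf{Step 3 (Conclusion).} Once $D\Psi|_{h=0}$ is surjective, a quantitative implicit function / open mapping argument shows that the image of any $C^k$-neighborhood of $0$ under $\Psi$ contains a neighborhood $\mathcal{N}$ of $\Psi(0)$ in $\mathcal{J}$. Since $Q$ is open and dense in $\mathcal{J}$, $\mathcal{N} \cap Q$ is nonempty, and any $h$ with $\Psi(h) \in \mathcal{N} \cap Q$ gives the required $g' := g_h$.

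\textbf{Main obstacle.} The principal difficulty is the surjectivity step: one must produce enough independent metric perturbations in a fixed small tube to span the entire symplectic $(k-1)$-jet space, and simultaneously ensure that the induced jet perturbations are indeed tangent to the symplectic submanifold (rather than only to the ambient jet space). The first half is a combinatorial/linear-algebraic unfolding of the Jacobi variation formula in Fermi coordinates; the second half follows because preservation of the symplectic form is an identity along the entire family $g_h$, so the variation automatically lies on the symplectic leaf. Carrying out the unfolding cleanly for all orders $\leq k-1$ is where the technical work is concentrated.
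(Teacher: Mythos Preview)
The paper does not give its own proof of this statement: Theorem~\ref{ktmain} is quoted verbatim as \cite[Theorem 2]{klta} and used as a black box, with only the auxiliary facts in Proposition~\ref{localpert} extracted from the Klingenberg--Takens argument. Your outline is indeed the classical Klingenberg--Takens strategy, so in spirit you are reproducing the cited proof rather than diverging from anything the present paper does.

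That said, there is one genuine gap in your Step~1. You assert that for small $h$ supported in $U$ one has $g_h \in \mathcal{G}^k_{g_0,\gamma}$, but by definition $\mathcal{G}^k_{g_0,\gamma} \subseteq \mathcal{G}^k_\gamma$, which requires that $\gamma$ remain a geodesic segment for $g_h$. An arbitrary symmetric $2$-tensor $h$ supported in $U$ will not preserve $\gamma$ as a geodesic, and your sentence ``the $g_h$-geodesic through $\gamma(0)$ with the same initial velocity still meets $\Sigma_1$ transversally near $\gamma(1)$'' tacitly acknowledges this by tracking a nearby geodesic rather than $\gamma$ itself. The Klingenberg--Takens construction restricts to perturbations $h$ whose jets along $\gamma$ are constrained so that $\gamma$ stays geodesic (cf.\ Proposition~\ref{localpert}(2): in Fermi-type coordinates the $h_{00}$ component vanishes to high order along $\gamma$). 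You must impose this constraint from the outset and then verify in Step~2 that the remaining freedom in $h$ --- the transverse components and their transverse derivatives --- still suffices to span $T_{\Psi(0)}\mathcal{J}$. This is exactly where the combinatorics of the Jacobi/variational equations enters, and it is not automatic; it is the substance of the Klingenberg--Takens paper. Your ``Main obstacle'' paragraph correctly identifies the difficulty but does not yet indicate how the spanning is achieved under the geodesic-preserving constraint.
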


\begin{remark}
	The technical assumption that $g_0$ is $C^\infty$ needed in \cite{klta} is virtually harmless, since by smooth approximation $\G^\infty \subseteq \G^k$ is dense for all $k$.
\end{remark}

We will need two additional facts about how these perturbations can be made, both of which follow directly from the proof of Theorem \ref{ktmain} in \cite{klta}:

\begin{proposition} \label{localpert}
	Let $h:= g'- g_0 \in S^2 T^*SM$, where $g'$ and $g_0$ given as in the statement of Theorem \ref{ktmain}. For any tubular neighborhood $V$ of $\gamma$, $h$ can be taken to satisfy:
	\begin{enumerate}
		\item [(1)] $\text{supp}(h) \subseteq V$;
		\item [(2)] For a system of coordinates $\{x_0, ..., x_{2n-2}\}$ on $V$ where $\partial_{x_0}$ is parallel to the geodesic flow,  the $k$-jets of $h_{00}$ (where $h = h_{ij} \, dx_i\,  dx_j$) vanish identically along $\{x_0 = 0\}$.
		
		In particular, this implies that the parametrization of  $\gamma$ by arc-length in $g_0$ is the same as that in $g'$, i.e., the geodesic flow for both metrics agree along $\gamma$.
	\end{enumerate}
\end{proposition}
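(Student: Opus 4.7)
The plan is to revisit the Klingenberg--Takens construction in \cite{klta} and verify that both the localization and the $h_{00}$-jet vanishing can be imposed without disturbing the $(k-1)$-jet of $P_{g'}$ at $\gamma(0)$. Work in Fermi coordinates along $\pi(\gamma) \subset M$, lifted canonically to coordinates $\{x_0,\ldots,x_{2n-2}\}$ on a neighborhood of $\gamma$ in $SM$ with $\partial_{x_0}$ along the geodesic flow. In these coordinates, $g_{0,00} \equiv 1$ and $g_{0,0j} \equiv 0$ along $\pi(\gamma)$, which is the structural feature I will exploit.

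For item $(1)$, the Klingenberg--Takens perturbation $h_{\mathrm{KT}}$ is already localized near $\pi(\gamma)$, since it is manufactured from a compactly supported generating function on the transverse slice. Given the prescribed tubular neighborhood $V$ of $\gamma$, choose a smooth bump $\chi$ on $M$ equal to $1$ on a tight tube around $\pi(\gamma)$ and supported in an open set $W\subset M$ small enough that the portion of $\pi^{-1}(W)$ near $\gamma$ lies in $V$. Replacing $h_{\mathrm{KT}}$ by $\chi \cdot h_{\mathrm{KT}}$ localizes the perturbation to $V$, and because $\chi \equiv 1$ along $\pi(\gamma)$ the Poincaré map is unchanged in a neighborhood of $\gamma(0)$, so the conclusion of Theorem~\ref{ktmain} is preserved.

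For item $(2)$, the key point is that the $(k-1)$-jet of $P_{g'}$ at $\gamma(0)$ depends only on transverse metric data along $\pi(\gamma)$, as is visible from the Jacobi/Riccati recursion which produces successive transverse derivatives of the exponential map. In particular the jet of $h_{00}$ in directions along or across $\pi(\gamma)$ is a free parameter. Let $T_k$ denote a smooth compactly-supported extension to $V$ of the $k$-th order Taylor polynomial of $h_{00}$ in the transverse coordinates $x_1,\ldots,x_{2n-2}$ along $\pi(\gamma)$. Replacing $h_{00}$ by $h_{00} - T_k$ (and adjusting the $h_{0j}$ analogously so that $g'$ retains Fermi form along $\pi(\gamma)$) produces a new perturbation with the same prescribed $(k-1)$-jet of $P_{g'}$ and vanishing $k$-jet of $h_{00}$ along the geodesic.

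The ``in particular'' clause then follows: with the $1$-jet of $h_{00}$ vanishing on $\pi(\gamma)$, the Christoffel symbols $\Gamma^i_{00}$ of $g'$ agree with those of $g_0$ along $\pi(\gamma)$, and $g'_{00}|_{\pi(\gamma)} = 1$, so $\pi(\gamma)$ is a unit-speed $g'$-geodesic and $\Phi^t_{g'} = \Phi^t_{g_0}$ along $\gamma$. The main obstacle I anticipate is making rigorous the claim that the $(k-1)$-jet of $P_{g'}$ truly does not see the pure $h_{00}$-jets along $\pi(\gamma)$: this is essentially a time-reparametrization argument, in which variations of $h_{00}$ in the flow direction are absorbed into a change of the arc-length parametrization, leaving the transverse Poincaré dynamics that define $P_{g'}$ invariant.
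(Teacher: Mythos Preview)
The paper does not actually prove this proposition: the sentence preceding it says both items ``follow directly from the proof of Theorem~\ref{ktmain} in \cite{klta},'' and that is the entire argument. The point is that in Klingenberg--Takens the perturbation is written down explicitly in Fermi coordinates along $\pi(\gamma)$: one modifies \emph{only} the $g_{00}$ component by a term of the form $\sum_{2\le |\alpha|\le k} c_\alpha(x_0)\,x^\alpha$ in the transverse variables, multiplied by a cutoff in $x_0$ supported in $(0,1)$. Item~(1) is then the choice of cutoff, and along the geodesic $h_{00}$ vanishes to order $\ge 2$ in the transverse directions by construction, so $g'_{00}|_{\pi(\gamma)}=1$ and the Christoffel symbols $\Gamma^i_{00}$ are unchanged there --- giving the ``in particular'' clause immediately. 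No post-processing of the perturbation is needed.

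Your approach is different: you take an arbitrary $h$ realizing the prescribed Poincar\'e jet and then try to modify it to force (1) and (2). Item~(1) is fine. For item~(2) there is a genuine gap. Your claim that ``the jet of $h_{00}$ in directions along or across $\pi(\gamma)$ is a free parameter'' for the $(k-1)$-jet of $P_{g'}$ is not correct: the whole content of the Klingenberg--Takens theorem is that the transverse jets of $h_{00}$ of order $\ge 2$ along the geodesic are precisely what control the $(k-1)$-jet of the Poincar\'e map --- the Jacobi/Riccati recursion feeds on the curvature $R(\dot\gamma,\cdot)\dot\gamma$, which is built from second and higher transverse derivatives of $g_{00}$. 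Subtracting the full transverse Taylor polynomial $T_k$ therefore destroys the prescribed jet of $P_{g'}$. What \emph{is} free are only the $0$- and $1$-jets of $h_{00}$ along $\pi(\gamma)$ (and derivatives purely in the $x_0$ direction), and killing those is exactly what the ``in particular'' needs. But in the KT construction these already vanish, so the correct route is simply to cite the explicit form of the KT perturbation rather than to attempt a modification argument.
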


Let $J^{k-1}_s$ denote the Lie group of $(k-1)$-jets of $C^{k-1}$ symplectic maps  $(\R^{2n}, 0) \to (\R^{2n}, 0)$ with the standard symplectic form $\sum_i dx^i \wedge dy^i$. If $\Or$ is a closed orbit, we may take $v := \gamma(0) = \gamma(1) \in \Or$ and fix $\Sigma := \Sigma_0 = \Sigma_1$, so by Darboux's theorem we may choose coordinates that identify the space of $(k-1)$-jets of  $C^{k-1}$ symplectic maps $(\Sigma, v) \to (\Sigma, v)$ with $J^{k-1}_s$.

\begin{corollary} \label{perturbclosed}
	If $\Or$ is a closed geodesic for $g_0 \in \G_\Or^\infty$ and $Q \subseteq J^{k-1}_s$ is an open dense invariant ($Q$ satisfies $\sigma Q \sigma^{-1} = Q$ for any $\sigma \in J^{k-1}_s$) set then there is arbitrarily $C^k$-close to $g_0$ a $g' \in \G_{\Or}^k$ such that for any $v \in \Or$ and any $\Sigma$ a transverse at $v$,  $P_{g'} \in Q$, where $P_{g'}  = P(v, \Sigma) $ is the Poincaré return map for the geodesic flow of $g'$.
\end{corollary}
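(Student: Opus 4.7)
The plan is to apply Theorem \ref{ktmain} at a single judiciously chosen basepoint and transverse section along $\Or$ and then propagate the conclusion to every other choice using the conjugation-invariance of $Q$.

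First I would fix a distinguished point $v_0 \in \Or$, a transverse $\Sigma_0$ at $v_0$, and Darboux coordinates $\psi_0 \colon (\Sigma_0, v_0) \to (\R^{2n},0)$ identifying the space of $(k-1)$-jets of symplectic germs $(\Sigma_0, v_0) \to (\Sigma_0, v_0)$ with $J^{k-1}_s$. Regarding $\Or$ as a closed loop $\gamma$ with $\gamma(0)=\gamma(1)=v_0$ and setting $\Sigma_1 := \Sigma_0$, Theorem \ref{ktmain} applied with the $\psi_0$-pullback of $Q$ as the target open dense set produces, arbitrarily $C^k$-close to $g_0$, a metric $g' \in \G^k_{g_0,\gamma}$ whose Poincaré map $P_{g'}(v_0,\Sigma_0)$ has $(k-1)$-jet in $Q$. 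By Proposition \ref{localpert} the perturbation $h=g'-g_0$ is supported in a tubular neighborhood of $\Or$ and preserves the arc-length parametrization of $\gamma$; in particular $\Or$ remains a closed geodesic of $g'$, so $g' \in \G^k_\Or$.

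The second step is to show that for any other basepoint $v \in \Or$ and any transverse $\Sigma$ at $v$, the $(k-1)$-jet of $P_{g'}(v,\Sigma)$ is conjugate in $J^{k-1}_s$ to that of $P_{g'}(v_0,\Sigma_0)$. I would define the local holonomy $\tau \colon (\Sigma,v) \to (\Sigma_0,v_0)$ of the geodesic flow of $g'$ by sending $\xi \in \Sigma$ near $v$ to the first intersection of its forward $\varphi_{g'}$-orbit with $\Sigma_0$ near $v_0$. Since $\varphi_{g'}^t$ preserves the canonical symplectic form on the projected tangent spaces of transverses, $\tau$ is a symplectic germ. Unwrapping the definition of the return map yields the germ identity
\[ P_{g'}(v,\Sigma) \;=\; \tau^{-1} \circ P_{g'}(v_0,\Sigma_0) \circ \tau, \]
and passing to $(k-1)$-jets in Darboux charts on $\Sigma$ and $\Sigma_0$ expresses this as an honest conjugation by some element $\sigma \in J^{k-1}_s$. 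The assumed invariance $\sigma Q \sigma^{-1} = Q$ then places the $(k-1)$-jet of $P_{g'}(v,\Sigma)$ in $Q$, completing the argument.

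I do not anticipate serious obstacles. The only mildly delicate checks are that the holonomy $\tau$ is symplectic as a germ (immediate from the flow-invariance of the contact form and the resulting symplectic structure on transversals) and that replacing the Darboux chart on $\Sigma$ by any other only alters the conjugating element by a further element of $J^{k-1}_s$, after which the invariance hypothesis does the rest.
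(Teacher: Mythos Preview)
Your proposal is correct and follows essentially the same approach as the paper: apply Theorem \ref{ktmain} at a single fixed basepoint and section, then observe that changing $v$ or $\Sigma$ replaces the Poincaré map by a conjugate, so invariance of $Q$ transports membership in $Q$ to every choice. The paper's proof is this exact argument compressed into one sentence; you have merely spelled out the holonomy $\tau$ and the conjugation identity explicitly.
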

\begin{proof}
	Choice of a different section $\Sigma$ or a different point $v$ of the orbit changes $P_{g'}$ by conjugation, so the property that $P_{g'} \in Q$ needs only be assured at one fixed point and one fixed section, which is done by Theorem \ref{ktmain}.
\end{proof}

\begin{remark} \label{1jets} Since the map $\pi^{k-1}: J^{k-1}_s \to J^1_{s} \cong \text{Sp}(2n)$ is a submersion, for $Q$ an open dense invariant subset of $\text{Sp}(2n)$, $(\pi^{k-1})^{-1}(Q)$ is an open dense invariant subset of $J^{k-1}_s$, so in the statement of Corollary \ref{perturbclosed} we may take an open dense invariant $Q \subseteq \text{Sp}(2n)$ instead, while the approximation is still in $\G^k$.
	
	In the context of Theorem \ref{ktmain}, the analogous observation holds; that is, one may take $Q$ to be an open dense subset of $1$-jets of symplectic maps $(\Sigma_0, \gamma(0)) \to (\Sigma_1, \gamma(1))$, and approximate in $\G^k$.
\end{remark}


\section{Pinching and Twisting for Flowss} \label{sec: pt}

In this section, we present the main technical results of the paper, namely, the construction of perturbations of Anosov flows leading to an appropriate pinching and twisting condition. For the sake of simplicity we specialize to the class of geodessic flows, but the main arguments here adapt to the proofs of the other theorems with adjustments which we describe in the last section. We define pinching and twisting for orbits of the geodesic flow in analogy with Definition \ref{deftyp}, and use the results on generic metrics to show that these are $C^1$-open and $C^k$-dense.

We fix the following useful notation. For a metric $g$ such that $\mathcal{O} \subseteq SM$ is a periodic orbit of its geodesic flow with period $\ell$, let $v \in \Or$ and let $\{\lambda_1, ..., \lambda_{2n}\}$ be the generalized eigenvalues of $D_v\varphi_g^\ell|_{E^u \oplus E^s}$, which do not depend on the choice of $v$,  sorted so that $|\lambda_i| \geq |\lambda_j|$ whenever $i < j$. We write:
$$ \exps^u(\Or, g): = (\lambda_1, ..., \lambda_n)  ,\, \hspace{.3cm} \exps^s(\Or, g): = (\lambda_{n+1}, ..., \lambda_{2n}) \in \C^n, 
$$
$$\, \exps (\Or, g): = (\lambda_1, ..., \lambda_{2n}) \in \C^{2n}.$$
The $i$-th coordinates of the vectors above are written as $\exps^{u, s, \cdot}_i(\Or, g)$ (where $\cdot$ means no superscript above).

The following continuity lemma about these $\exps$ is the bread and butter of all ``openness" arguments which follow:

\begin{lemma} \label{expscont} For a metric $g_0 \in \G^k$ there exists a neighborhood $\cU \subseteq \G^k$ of $g_0$ such that for any $g \in \cU$ any orbit $\Or$ of the geodesic flow of $g_0$ has a hyperbolic continuation $\Or_g$ for the geodesic flow of $g$, and the maps $\cU\to \C^n$ given by $$g \mapsto \exps^{u,s}(\Or_g, g)$$ are continuous with respect to the $C^2$-topology.
\end{lemma}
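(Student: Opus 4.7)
The strategy is to assemble three standard ingredients: continuous dependence of the geodesic spray on the metric, persistence of hyperbolic periodic orbits under $C^1$-small perturbations of the generating vector field, and continuity of eigenvalues of matrices as functions of their entries.

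First, since the geodesic spray $X_g$ on $SM$ depends on the Christoffel symbols of $g$ and hence on first derivatives of $g$, the assignment $g \mapsto X_g$ is continuous from the $C^k$-topology on metrics to the $C^{k-1}$-topology on vector fields on $SM$. Taking $k = 2$, metrics close in the $C^2$-topology produce sprays that are $C^1$-close, which is the regularity I need for the next step.

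Second, to obtain the hyperbolic continuation $\Or_g$, I would fix a small smooth transversal $\Sigma$ to $X_{g_0}$ at a point $v \in \Or$ and consider the Poincaré first-return map $P_g : (\Sigma, v) \to \Sigma$. Applying the implicit function theorem to the equation $\varphi_g^t(x) \in \Sigma$, using that $X_g(v)$ remains transverse to $\Sigma$ for $g$ near $g_0$, one obtains that $P_g$ is defined on a neighborhood of $v$, is $C^1$ in $x$, and depends continuously on $g$ in the $C^1$-topology on $\Sigma$. Hyperbolicity of $v$ as a fixed point of $P_{g_0}$ (i.e.\ the absence of the eigenvalue $1$ in $D_v P_{g_0}$) then yields, by a further application of the implicit function theorem, a unique continuation $v_g$ of $v$ as a fixed point of $P_g$, depending continuously on $g$ in a $C^2$-neighborhood $\cU$ of $g_0$. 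Letting $\ell_g$ be the corresponding return time, which is likewise continuous in $g$, one obtains the hyperbolic closed orbit $\Or_g$.

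Finally, the derivative $D_{v_g} P_g$ depends continuously on $g \in \cU$ by the same considerations (flow derivatives depend continuously on $C^1$-close vector fields, evaluated at continuously varying points), and its eigenvalues are precisely the entries of $\exps(\Or_g, g)$, since passing to $\Sigma$ quotients out the flow direction and its trivial eigenvalue $1$. Continuity of the matrix entries combined with the continuity of roots of a polynomial in its coefficients gives continuity of the spectrum as an unordered multiset; hyperbolicity of $\Or_g$ keeps the unstable and stable eigenvalues strictly separated by the unit circle, so the decomposition $(\exps^u, \exps^s)$ is automatically continuous, and any ties in modulus within $\exps^u$ or $\exps^s$ at $g_0$ are handled by any locally consistent labeling, which remains continuous because eigenvalues coinciding in modulus at $g_0$ stay arbitrarily close to each other on a small enough neighborhood of $g_0$. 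The main obstacle is thus merely the bookkeeping of this ordering; the substantive analytic content is entirely classical.
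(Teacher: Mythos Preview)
Your proposal is correct and follows essentially the same route as the paper: reduce to a Poincar\'e return map on a transversal, obtain the hyperbolic continuation $v_g$ and the continuous dependence of $D_{v_g}P_g$ via the implicit function theorem and the variational equation, and then invoke continuity of eigenvalues in matrix entries. The paper's proof is terser and leans on $k\ge 3$ to get $C^{k-1}$ dependence of the flow, whereas you argue more sharply that $C^2$-closeness of metrics already gives $C^1$-closeness of sprays and hence of $DP_g$; you are also more explicit about the eigenvalue-ordering bookkeeping, which the paper leaves implicit.
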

\begin{proof}
	Let $\Sigma$ be a smooth hypersurface parallel to $E^u \oplus E^s$ at $v$ so that $\Or \cap \Sigma =: \{v\}$. The return map for the geodesic flow $\varphi_{g_0}$ then defines a map $P_{g_0}: U \to \Sigma$, where $U\subseteq \Sigma$ is some neighborhood of $v$, for which $v$ is a hyperbolic fixed point. 
	
	For any $g$ sufficiently close to $g_0$, we also obtain a map $P_g: U \to \Sigma$ given by the return map of $\varphi_g$, and by the standard hyperbolic theory, a fixed point $v_g$ such that $g \mapsto v_g$ is continuous. The geodesic flow $\varphi_g$ varies in a $C^{k-1}$ fashion as $g$ varies in $\G^k$, and by the implicit functon theorem so does $P_g$. Then by fixing a coordinate system, since $k \geq 3$ the matrices $D_{v_g}P_g$ vary continuously, so their eigenvalues vary continuously as $g$ varies in $\G^k$. 
	
	Finally, the eigenvalues of the matrices $D_{v_g}\varphi_g^{\ell_g}|_{E^u \oplus E^s}$ and $D_{v_g}P_g$ agree, so we obtain the desired result.
\end{proof}

\subsection{Pinching}

Before moving to the definition of pinching, first we verify that generically there exists a periodic orbit with a dominated splitting of $E^u \oplus E^s$ into 1-dimensional subspaces and 2-dimensional subspaces corresponding to conjugate pairs of eigenvalues.

\begin{proposition} \label{gdopendense}
	Let 
	$$\mathcal{G}_d^k :=\{g \in \mathcal{G}^k: \exists\mathcal{O}:  |\lambda_i| \neq |\lambda_j|, \text{ unless } \lambda_i = \bar{\lambda}_j, \text{ where } (\lambda_1, ..., \lambda_n):=\exps^u(\Or, g)\}.$$
	
	The set $\G_d^k$ is $C^2$-open and $C^k$-dense in $\mathcal{G}^k$. 
\end{proposition}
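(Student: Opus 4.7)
The plan is to treat openness and density separately, using Lemma~\ref{expscont} for the first and Corollary~\ref{perturbclosed} (combined with Remark~\ref{1jets}) for the second.

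For openness, fix $g_0\in\G_d^k$ with a witnessing periodic orbit $\Or$ having unstable eigenvalues $\exps^u(\Or,g_0)=(\lambda_1,\dots,\lambda_n)$. Lemma~\ref{expscont} provides a $C^2$-neighborhood $\cU$ of $g_0$ on which $\Or$ admits a hyperbolic continuation $\Or_g$ and $g\mapsto \exps^u(\Or_g,g)$ is continuous. The eigenvalue condition is then preserved: two real eigenvalues of distinct modulus cannot collide under a small perturbation, so they remain real with distinct moduli; a complex conjugate pair $\lambda_i,\bar\lambda_i$, being bounded away from the real axis, stays non-real under small perturbation, and since the cocycle is real, the perturbed eigenvalues form a complex conjugate pair again; and moduli belonging to different blocks remain distinct by continuity. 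Hence $\cU\subseteq \G_d^k$.

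For density, fix $g_0\in\G^k$. Using that $\G^\infty$ is $C^k$-dense in $\G^k$, replace $g_0$ by a $C^k$-close $g_1\in\G^\infty$ and pick any hyperbolic closed orbit $\Or$ of the geodesic flow of $g_1$ (one exists by the Anosov property). Let
\[
\mathcal{Q}:=\{A\in\operatorname{Sp}(2n): A \text{ has simple spectrum and } |\lambda_i|=|\lambda_j|,\ \lambda_i\neq \lambda_j \implies \lambda_j=\bar\lambda_i\}.
\]
Then $\mathcal{Q}$ is conjugation-invariant (eigenvalues are a conjugacy invariant) and $C^0$-open (eigenvalues vary continuously with the matrix entries). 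I would verify $C^0$-density by showing that $\mathcal{Q}^c$ lies in a proper real-algebraic subvariety of $\operatorname{Sp}(2n)$: the union of the discriminant locus of the characteristic polynomial and the vanishing locus of a nontrivial polynomial expressing the conjunction $|\lambda_i|^2=|\lambda_j|^2$ with $\lambda_i\neq\bar\lambda_j$. Applying Corollary~\ref{perturbclosed} via Remark~\ref{1jets} then yields $g'\in\G_\Or^k$, arbitrarily $C^k$-close to $g_1$, whose Poincar\'e return map at $\Or$ has $1$-jet in $\mathcal{Q}$; since the eigenvalues of that $1$-jet are exactly $\exps(\Or,g')$, the orbit $\Or$ witnesses $g'\in\G_d^k$.

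The main obstacle is verifying density of $\mathcal{Q}$ in $\operatorname{Sp}(2n)$. The symplectic constraint forces eigenvalues to come in $\{\lambda,1/\lambda,\bar\lambda,1/\bar\lambda\}$-orbits, so one must confirm that the modulus-equality condition for non-conjugate eigenvalues still cuts out a nowhere-dense set within this constrained parameter space. The cleanest route is to exhibit, for any problematic configuration of equal-moduli non-conjugate eigenvalues, an explicit element of $\mathfrak{sp}(2n)$ whose associated one-parameter subgroup separates the offending moduli while preserving the symplectic form; density then follows since the bad locus is algebraic and hence nowhere dense as soon as it is proper.
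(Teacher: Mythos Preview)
Your overall strategy matches the paper's: openness via Lemma~\ref{expscont}, and density by reducing to an open, dense, conjugation-invariant condition on $\operatorname{Sp}(2n)$ and then invoking Corollary~\ref{perturbclosed} through Remark~\ref{1jets}. The difference lies only in how density of $\mathcal{Q}$ inside $\operatorname{Sp}(2n)$ is established. You propose showing $\mathcal{Q}^c$ sits in a proper real-algebraic subvariety, but the modulus condition $|\lambda_i|=|\lambda_j|$ is only semi-algebraic in the matrix entries (not the zero set of a single polynomial in the coefficients of the characteristic polynomial), so the ``proper subvariety $\Rightarrow$ nowhere dense'' step needs more care than you indicate; your fallback to a one-parameter subgroup separating moduli is what actually does the work. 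The paper sidesteps this issue: it first uses the discriminant locus (which \emph{is} Zariski-closed) to pass to the dense open set of matrices with simple spectrum, then invokes Lemma~\ref{linalg} to symplectically block-diagonalize any such matrix into $1\times 1$ and $2\times 2$ conformal blocks, after which separating the moduli is a trivial perturbation of the individual block norms. Your one-parameter-subgroup perturbation is morally the same move, but the paper's route via explicit symplectic diagonalization is more concrete and avoids the semi-algebraic bookkeeping.
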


\begin{proof}
	Openness follows directly from Lemma \ref{expscont}, since by continuity of $\exps^u$ the continuations of $\Or$ will satisfy the same condition defining $\G_d^k$.
	
	For density, we start by assuming that $g_0 \in \G^\infty_\mathcal{O}$, for some $\Or$, which is possible by density of $\G^\infty$ in $\G^k$.  It remains to check that the property defining $\G_d^k$ is indeed an open dense in $J^{k-1}_s$, so that we may apply Corollary \ref{perturbclosed} to finish the proof. By Remark \ref{1jets}, it suffices to check that having eigenvalues distinct with distinct norms, apart from complex conjugate pairs, is an openand  dense Sp$(2n)$.

Openness is clear, since the eigenvalues depend continuously on the matrix entries. For density, we note that  the condition of distinct eigenvalues is given by the complement of the equation $\Delta = 0$, where $\Delta$ is the discriminant of the characteristic polynomial,  which is a non-empty Zariski open set in Sp$(2n)$, and thus dense in the analytic topology. In particular, the set of diagonalizable matrices is dense. Since diagonalizable matrices are symplectically diagonalizable, by the lemma following this proof, by a small perturbation on the norm of the diagonal blocks we obtain density of eigenvalues of distinct norms.
\end{proof}

We prove the linear algebra lemma used above, which will also be useful in what follows:

\begin{lemma} \label{linalg}
	A matrix $A \in \text{Sp}(2n)$ with all eigenvalues distinct is symplectically diagonalizable in the sense that there exists $P \in \text{Sp}(2n)$ such that $P^{-1}AP$ is in real Jordan form (i.e., given by  diagonal blocks which are either trivial or $2\times 2$ conformal).
\end{lemma}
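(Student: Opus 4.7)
The approach relies on the standard fact that eigenspaces of a symplectic matrix corresponding to eigenvalues whose product is not $1$ are symplectically orthogonal. Using this, I can split $\R^{2n}$ as a symplectic direct sum of $A$-invariant ``reciprocal clusters," and then on each cluster choose a symplectic basis that puts $A$ into a $1\times 1$ real block or a $2\times 2$ conformal block.

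First I would record the elementary observation that if $Av = \lambda v$ and $Aw = \mu w$ for $v, w \in \C^{2n}$, then applying $A^{T}JA = J$ gives $\lambda\mu\,\omega(v,w) = \omega(v,w)$, so $\omega(v,w) = 0$ whenever $\lambda\mu \neq 1$. Since the $2n$ eigenvalues of $A$ are distinct and come in reciprocal pairs (because $A$ is symplectic) as well as in complex conjugate pairs (because $A$ is real), and since any eigenvalue equal to $\pm 1$ would have to pair with itself, distinctness forces every eigenvalue $\lambda$ to satisfy $\lambda \notin \{\pm 1\}$. Group the eigenvalues into \emph{clusters}: each real $\lambda$ pairs with $1/\lambda$ giving a $2$-dimensional real invariant subspace $V_\lambda = E_\lambda \oplus E_{1/\lambda}$; each complex $\lambda$ on the unit circle pairs with $\bar\lambda = 1/\lambda$ giving a $2$-dimensional real $V_\lambda$; each complex $\lambda$ off the unit circle yields a $4$-dimensional real $V_\lambda = E_\lambda \oplus E_{\bar\lambda} \oplus E_{1/\lambda} \oplus E_{1/\bar\lambda}$. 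By the orthogonality observation, distinct clusters are symplectically orthogonal, and so the restriction of $\omega$ to each cluster is non-degenerate (since the full form is), giving a symplectic direct sum decomposition $\R^{2n} = \bigoplus V_\lambda$.

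On each cluster I then build a symplectic basis in which $A$ is in the desired form. For a real reciprocal pair $(\lambda, 1/\lambda)$, pick eigenvectors $v_+ \in E_\lambda$, $v_- \in E_{1/\lambda}$; since $\lambda \cdot (1/\lambda) = 1$ the pairing $\omega(v_+, v_-)$ is nonzero, and after rescaling one of them we obtain a symplectic basis in which $A$ acts as $\mathrm{diag}(\lambda, 1/\lambda)$, which is a diagonal (hence conformal in the degenerate $1\times 1$ sense) pair of blocks. For a unit-circle pair $\lambda = e^{i\theta}$, take $v \in E_\lambda$ and form $e_1 = \mathrm{Re}\,v$, $e_2 = \mathrm{Im}\,v$; these span $V_\lambda$ over $\R$, $A$ acts on them as a rotation by $\theta$, and by rescaling (using that $\omega(v, \bar v)$ is a nonzero imaginary number) the pair becomes symplectic. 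For a complex off-circle quadruple, apply the unit-circle construction inside each of the two isotropic eigenspace-pairs $E_\lambda \oplus E_{\bar\lambda}$ and $E_{1/\lambda} \oplus E_{1/\bar\lambda}$, then rescale across the pair exactly as in the real case to make them symplectically dual; this produces four real vectors on which $A$ acts by two $2\times 2$ conformal blocks of scaling factors $|\lambda|$ and $1/|\lambda|$.

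The main obstacle is bookkeeping in the complex off-circle case, where one has to perform the real/imaginary-part construction and the reciprocal-pair rescaling simultaneously while keeping compatibility with complex conjugation, so as to land in a genuinely real symplectic basis; all other cases are direct. Once all clusters are handled, concatenating the chosen bases gives a basis of $\R^{2n}$ in which $A$ is block-diagonal with $1\times 1$ and $2\times 2$ conformal blocks, and the change-of-basis matrix $P$ is symplectic because the basis is symplectic. This yields $P^{-1}AP$ in real Jordan form as required.
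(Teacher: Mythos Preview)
Your proof is correct and follows essentially the same strategy as the paper: both arguments hinge on the identity $\omega(v_\lambda, v_\mu) = \lambda\mu\,\omega(v_\lambda, v_\mu)$ to obtain a symplectic-orthogonal decomposition into reciprocal clusters, then handle each cluster separately. Your version is in fact more explicit than the paper's---you spell out the real, unit-circle, and off-circle cases and the construction of the symplectic basis in each, whereas the paper simply asserts that ``in each $E_\lambda$, $A$ can be put in Jordan real form with respect to a symplectic basis.''
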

\begin{proof}
	Recall that eigenvalues of $A \in \text{Sp}(2n)$ appear in 4-tuples $$\{\lambda, \bar{\lambda}, \lambda^{-1}, \bar{\lambda}^{-1}\}$$ for $\lambda \notin \R$ and in pairs $\{\lambda, \lambda^{-1}\}$ for $\lambda \in \R$. For each $\lambda$ we let $E_\lambda = E_{\lambda^{-1}}$ be the 2-dimensional subspace spanned by the eigenspaces of $\lambda$ and $\lambda^{-1}$.
	
	Extend $\omega$ and $A$ to $\omega_\C$ and $A_\C$ in the complexification $\C^{2n} = \R^{2n} \otimes \C$. By definition $A_\C$ and $\omega_\C$ agree with $A$ and $\omega$ on $\R^{2n} \otimes 1$. 
	
	The identity for eigenvectors $v_\lambda$ and $v_\eta$:
	$$\omega_\C(v_\lambda, v_\eta) =  \omega_\C(A_\C v_\lambda, A_\C v_\eta) = \lambda\eta\, \omega_\C(v_\lambda, v_\eta),$$
	implies that, unless $\lambda\eta = 1$, we have $\omega_\C(v_\lambda, v_\eta) = 0$. Therefore $E_\lambda \otimes \C$ is symplectically orthogonal to $E_\eta \otimes \C$ for any $\lambda \neq \eta, \eta^{-1}$.
	
	In particular, this implies that the $E_\lambda \otimes 1$ are symplectic subspaces with respect to $\omega$ the real form, and symplectically orthogonal to each other. In each $E_\lambda$, $A$ can be put in Jordan real form with respect to a symplectic basis. By orthogonality we may construct a symplectic basis for $\R^{2n}$ by taking the union of symplectic bases for the $E_\lambda$. Then let $P$ be the matrix which sends the standard $\R^{2n}$ basis to the constructed symplectic basis.
\end{proof}

The next step is to construct a metric with a periodic orbit with simple real spectrum  with an arbitrarily small perturbation of the metric. Following \cite{bv}, this is accomplished by slightly perturbing a periodic orbit $\mathcal{O}$ rotating a complex eigenspace, and propagating the perturbation to a periodic orbit which shadows a homoclinic orbit of $\mathcal{O}$ that spends a long time near $\mathcal{O}$.

Recall the  following definitions: an \textit{$\eps$-pseudo-orbit} for a flow $\Phi$ on a space $X$ is a (possibly discontinuous) function $g : \R \to X$ such that:
	$$d(\gamma(t+\tau),\Phi^\tau(\gamma(t)))<\eps \, \text{ for } t \in \R \, \text{ and } \,|\tau| < 1. $$

 For $\gamma$ a $\varepsilon$-pseudo-orbit, we say $\gamma$ is said to be \textit{$\delta$-shadowed} if there exists a point $p \in X$ and a homeomorphism $\alpha: \R \to \R$ such that $\alpha (t) - t$ has Lipschitz constant $\delta$ and $d(\gamma(t), \Phi^{\alpha (t)}(p)) \leq \delta$ for all $t \in \R$. 
	

The classic closing lemma for Anosov flows we need is:

\begin{theorem} \cite{fh} (Anosov Closing Lemma) \label{closinglemma} If $\Lambda$ is a hyperbolic set for a flow $\Phi$ then there are a neighborhood $U$ of $\Lambda$ and numbers $\varepsilon_0, L > 0$ such that for $\varepsilon \leq \varepsilon_0$ any compact $\varepsilon$-pseudo-orbit in $U$ is $L\varepsilon$-shadowed by a unique compact orbit for $\Phi$.	
\end{theorem}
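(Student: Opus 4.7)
This is the classical shadowing theorem for hyperbolic flows, so the approach is to set up a Banach fixed-point problem on a space of candidate orbit corrections. First I would pass to an adapted Riemannian metric on $X$ so that on a neighborhood $U$ of $\Lambda$ the time-one map $\Phi^1$ contracts $E^s$ by some $\theta < 1$ and expands $E^u$ by $1/\theta$, and the continuous invariant cone-field extends to $U$ with uniform local stable and unstable manifolds of fixed size.

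Given the compact $\varepsilon$-pseudo-orbit $\gamma$, discretize by setting $x_n := \gamma(n)$ for $n \in \Z$; periodicity of $\gamma$ makes $(x_n)$ periodic (up to $\varepsilon$) in $n$, and by construction $d(x_{n+1}, \Phi^1(x_n)) \le \varepsilon$. We seek true points $y_n$ close to $x_n$ satisfying $y_{n+1} = \Phi^{t_n}(y_n)$ for some $t_n$ near $1$. In local coordinates around each $x_n$, split the correction as $(u_n, c_n, s_n) \in E^u \oplus E^0 \oplus E^s$; the orbit condition becomes a coupled system in which the $u$-coordinate is expanded (so we read it off the next step) and the $s$-coordinate is contracted (read off from the current step), while $c_n$ is absorbed into the choice of $t_n$. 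Rewriting this as a fixed-point equation $\Psi$ on the Banach space $\ell^\infty(\Z, E^u\oplus E^s)$ of bounded corrections, one checks that $\Psi$ is a contraction on the closed ball of radius $L\varepsilon$ into itself, where $L$ depends only on $\theta$ and on the Lipschitz constants of $D\Phi^1$. The unique fixed point gives the shadowing orbit, and the prescribed time-change $t_n \mapsto \alpha$ has Lipschitz constant $O(\varepsilon)$.

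For the compactness conclusion: since $(x_n)$ is periodic in $n$, so is the coefficient data defining $\Psi$, and the shift on $\ell^\infty(\Z,\cdot)$ commutes with $\Psi$. Uniqueness of the fixed point in the ball of radius $L\varepsilon$ then forces the shadowing sequence to be periodic of the same combinatorial period, producing a genuine closed orbit of $\Phi$; uniqueness of the shadowing orbit (up to reparametrization) likewise follows because any two candidates within $L\varepsilon$ are both fixed points of the same contraction.

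\textbf{Main obstacle.} The principal technical point, and the reason a naive contraction on $E^u \oplus E^0 \oplus E^s$ fails, is the flow direction $E^0$, which is neither contracted nor expanded. One must quotient it out by allowing the time reparametrization $\alpha$, and simultaneously keep $\|\alpha(t) - t\|_{\mathrm{Lip}}$ of order $\varepsilon$ to preserve the shadowing property. A clean way to handle this is to replace $\Phi^1$ by Poincaré return maps between small transversals at each $x_n$, turning the flow shadowing problem into a purely discrete shadowing problem for a sequence of local symplectic/hyperbolic diffeomorphisms, after which the contraction argument above proceeds without the center direction ever appearing.
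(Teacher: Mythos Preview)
The paper does not prove this statement at all: it is quoted verbatim from \cite{fh} as a known result and used as a black box in the proof of Proposition~\ref{pinch}. There is therefore no ``paper's own proof'' to compare your proposal against.

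That said, your outline is the standard contraction-mapping proof of shadowing for hyperbolic flows and is essentially correct. The one point worth tightening is the handling of the flow direction: you correctly identify that passing to Poincar\'e maps between transversals at the discretized points $x_n$ eliminates $E^0$, but you should make explicit that the return times to these transversals vary $C^1$-smoothly with the base point (by the implicit function theorem), since this is what gives the Lipschitz bound on $\alpha(t)-t$. Also, your claim that ``$(x_n)$ is periodic (up to $\varepsilon$) in $n$'' deserves care: a compact $\varepsilon$-pseudo-orbit in the sense defined here is a periodic function $\gamma:\R\to X$, so $(x_n)$ is genuinely periodic if the period is an integer, but in general one must allow a non-integer combinatorial period and check that the contraction $\Psi$ respects the corresponding shift, which it does for the same uniqueness reason you already give.
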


We use it to prove the main result of this section:

\begin{proposition} \label{pinch}
	Let
	$$\mathcal{G}_p^k := \{g \in \mathcal{G}^k: \exists \Or: \lambda_i \neq \lambda_j, \lambda_i \in \R, \textit{ where } (\lambda_1, ..., \lambda_n):=\exps^u(\Or, g). \}. $$
	In this situation, we say $\Or$ has the \textit{pinching property} for $g$. Then $\G_p^k$ is $C^2$-open and $C^k$ dense in $\G^k$.
\end{proposition}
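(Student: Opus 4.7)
The plan is to handle openness via continuity of eigenvalues and density via a Bonatti--Viana-style homoclinic argument, using the rotation-number formalism of Section \ref{ssec:rotation} to convert complex conjugate pairs of eigenvalues into pairs of real eigenvalues. Openness is immediate: if $g_0 \in \G_p^k$ with witness $\Or$, Lemma \ref{expscont} provides a $C^2$-neighborhood on which the continuation $\Or_g$ persists and $\exps^u(\Or_g, g)$ varies continuously, and both reality and pairwise distinctness of the coordinates of $\exps^u$ are open conditions.

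For $C^k$-density I would first reduce to the case $g_0 \in \G_d^k$ via Proposition \ref{gdopendense}, so $g_0$ admits a closed orbit $\Or$ with a dominated decomposition of $E^u \oplus E^s$ into one-dimensional real eigenlines of distinct norms and two-dimensional blocks corresponding to complex conjugate eigenvalue pairs of distinct norms. If no complex pair occurs we are done; otherwise I would fix one such pair with 2-plane $F$. Topological mixing of the geodesic flow produces a transverse homoclinic orbit $\Or'$ to $\Or$, and persistence of dominated splittings extends $F$ along $\Or'$. Applying the Anosov Closing Lemma (Theorem \ref{closinglemma}) to the pseudo-orbit that loops $n$ times around $\Or$ before traversing $\Or'$ yields, for $n \geq n_0$, a closed orbit $\Or_n$ shadowing this pseudo-orbit and carrying a dominated 2-plane $F_n$ continuing $F$.

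The main point is the asymptotic identity
$$
\rho_{\Or_n}\,\ell(\Or_n) \;=\; n\,\rho_\Or\,\ell(\Or) \;+\; \rho_{\mathrm{hom}} \;+\; o(1) \quad\text{as } n \to \infty,
$$
where $\rho_{\mathrm{hom}}$ is a bounded contribution from the homoclinic excursion. I would then invoke Theorem \ref{ktmain} together with Proposition \ref{localpert}(1) to produce a $C^k$-small perturbation $g_\theta$ supported in a tubular neighborhood $V$ of $\Or$ with $V \cap \Or' = \emptyset$, modifying the 1-jet of the Poincaré return map along $\Or$ by a small rotation of angle $\theta$ in the $F$-block; Lemma \ref{linalg} and Remark \ref{1jets} show that such 1-jet perturbations are realizable by an open dense subset of $\text{Sp}(2n)$. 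By Proposition \ref{rotcont2}, $\rho_\Or$ depends continuously on the perturbation and is shifted by approximately $\theta/\ell(\Or)$, while $\Or'$ and the homoclinic portion of $\Or_n$ are left untouched. The monodromy on $F_n$ has real eigenvalues precisely when $\rho_{\Or_n}\ell(\Or_n) \equiv 0 \pmod{\pi}$; fixing $n$ large, the factor $n$ multiplying $\theta$ lets an arbitrarily small $\theta$ cross such a resonance.

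A further small perturbation by Theorem \ref{ktmain} would separate the norms of the two new real eigenvalues from each other and from the remaining eigenvalues, whose continuity follows from Lemma \ref{expscont}; this produces $\Or_n$ with one fewer complex pair. Iterating eliminates all complex pairs, yielding $g \in \G_p^k$ arbitrarily $C^k$-close to $g_0$. The main obstacle I foresee is making the displayed asymptotic for $\rho_{\Or_n}$ fully rigorous -- this requires propagating the dominated splitting along the shadowing orbit using local product structure and combining the continuous dependence from Proposition \ref{rotcont2} with the subadditive control used to define $\rho$. The closing-lemma and Klingenberg--Takens ingredients are individually standard, but their compatibility (in particular, that shrinking the support of the perturbation to a tube missing $\Or'$ still allows the needed rotation to be realized) requires some care.
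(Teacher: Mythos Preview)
Your overall strategy matches the paper's, but two steps as written would not go through. First, the support condition ``$V\cap\Or'=\emptyset$'' is topologically impossible: a homoclinic orbit accumulates on all of $\Or$ in both time directions, so every tubular neighborhood of $\Or$ (or of any subarc of $\Or$) meets $\Or'$ in infinitely many segments. The paper makes no attempt to isolate the perturbation from the homoclinic orbit. It perturbs $g_0$ to some $g_1\in\G_\Or^k$ with $\theta_{g_1}\neq\theta_{g_0}$ on the chosen $2$-block (via Corollary \ref{perturbclosed}), and then works along the entire affine path $g_s=sg_1+(1-s)g_0$; along this path $\Or$ remains a closed geodesic with the \emph{same} arclength parametrization (Proposition \ref{localpert}(2)), while the homoclinic point $w_s$ and the shadowing orbits $\Or(w_{n,s})$ are allowed to drift with $s$.

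Second, the additive asymptotic $\rho_{\Or_n}\ell(\Or_n)=n\,\rho_\Or\ell(\Or)+\rho_{\mathrm{hom}}+o(1)$ is not what is established, and the paper's substitute is genuinely different. One writes $\tilde\theta_n(s)=\ell(w_{n,s})\,\rho_{\Or(w_{n,s})}$ and bounds $|\tilde\theta_n(1)-\tilde\theta_n(0)|$ from below using two ingredients absent from your sketch: (i) a trivialization of the $2$-plane bundle over the full compact invariant set $K_{N,s}=\Or\cup\Or(w_s)\cup\bigcup_{n\ge N}\Or(w_{n,s})$ (Proposition \ref{trivialization}, a nontrivial orientation argument), so that Theorem \ref{rotationnumberiscontinuous} applied to the weak-$*$ convergence $\mu_{\Or(w_{n,s})}\to\mu_\Or$ yields $\rho_{\Or(w_{n,s})}\to\rho_\Or(s)$; and (ii) the uniform period bound $\sup_n|\ell(w_{n,1})-\ell(w_{n,0})|<\infty$ (Proposition \ref{boundlength}), proved via exponential shadowing together with the H\"older geodesic stretch of the structural-stability conjugacy between $\varphi_{g_0}$ and $\varphi_{g_1}$, using crucially that $a|_\Or\equiv 1$ by the previous paragraph. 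These combine to give $|\tilde\theta_n(1)-\tilde\theta_n(0)|\ge\delta\,\ell(w_{n,1})-M_1M_2\to\infty$, so an intermediate $s$ lands in $2\pi\Z$. Your ``$\rho_{\mathrm{hom}}+o(1)$'' would need precisely such a period-control to remain bounded as the metric varies, and you have not supplied one.
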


\begin{proof}
	Fix a $C^2$-open set $\cU \subseteq \G^k$. First, since $\G_d^k$ is $C^2$-open and dense and $\G^\infty$ is $C^k$-dense in $\G^k$, we may fix some $g_0 \in \cU \cap \G_d^\infty$.  Let $\Or$ be as in Proposition \ref{gdopendense}. 
	
	Suppose that the vector $\exps^u(\Or, g)$ has $2c$ entries in $\C \setminus \R$, for some $c > 0$. It suffices to show that there exists a metric $g'$ in $\mathcal{U}$ which has a periodic orbit $\Or'$ such that $\exps^u(\Or', g')$ has $2(c-1)$ complex entries and all real entries distinct.
	
	Along $\mathcal{O}$ there is a dominated splitting $E^u = E^-_1 \oplus \dots \oplus E^-_k$ such that each $E_i$ is either 1 or 2-dimensional. Fix the smallest index $i \in \{1,..., k\}$ such that $E^\pm_{i}$ is 2-dimensional and let $P_{g_0}$ denote the Poincaré return map of the geodesic flow for a fixed section $\Sigma$ transverse to the flow small enough so that $\Or \cap \Sigma =:\{v\}$. By shrinking $\cU$ further if needed we may assume that $\cU \subseteq \G_d^k$, i.e., that the dominated splitting for $\varphi_{g_0}$ along $\Or$ persists for the continuation of $\Or$ for all $g \in U$; thus, by Lemma \ref{expscont} the map $g \mapsto \theta_g:= |\text{arg}(\lambda_g)|$ is well defined and continuous, where $\lambda_g$ is an eigenvalue of $D\varphi_g$ on $E^-_i$ on the continuation of $\Or$.
	
	
	\begin{lemma} There exists $g_1 \in \mathcal{U} \cap \G_\Or^k$ such that $\theta_{g_1} \neq \theta_{g_0}$.
	\end{lemma}
	
	\begin{proof}
		The derivative of the Poincaré map is conjugate to $D\varphi_g|_{E^u \oplus E^s}$ over the closed orbit $\Or$, so $\theta_{g_0}$ agrees with the argument of the eigenvalue of $DP_{g_0}$ along the 2-dimensional Jordan block $F^- \subseteq T_v\Sigma$ mapped to $E_{i}^-$ under the conjugation aforementioned.  Moreover, let $F^+$ be the Jordan block corresponding to $E^+_i$ in the same manner.
			
		Identifying the space of symplectic maps $T_v\Sigma \to T_v\Sigma$ with Sp$(2n)$ there exists some neighborhood $\cV \subseteq \text{Sp}(2n)$ of the original map $DP_{g_0}$, such that for $A \in \cV$ the Jordan block $F$ has a continuation for $A$, and we call the norm of the argument of the eigenvalue of $A$ along this continuation $\theta_A$. Let $\cW \subseteq \cV$ be the set of matrices $A$ such that $\theta_A \neq \theta_{g_0}$. If $\cW$ is open and dense in $\cV$ then by Remark \ref{1jets} we may apply Corollary \ref{perturbclosed} to $\cW \cup ((\text{Sp}(2n)\setminus \text{Cl}(\cV))$, which will be open and dense in Sp$(2n)$ to find that the set of metrics which has $\theta_{g_1} \neq \theta_{g_0}$ is dense (and open) in $\cU$. 
		
		It remains to check that $\cW$ is open and dense in $\cV$. Openness is clear by continuous dependence of eigenvalues on matrix entries. For density, let $R_\theta$ be given by rotation of any angle of $\theta > 0$ on the subspaces $F^-, F^+$ and the identity on the other subspaces, satisfies $R_\theta \Omega R_\theta^{T} = \Omega$, where $\Omega$ is the standard symplectic form.  Then $R_\theta DP_{g_0}$ has $\theta_{R_\theta DP_{g_0}} \neq \theta_{g_0}$; since $\theta > 0$ can be made arbitrarily small, this finishes the proof.
	\end{proof}
	
	Let $g_1$ be given as in the lemma above, and for $0 \leq s \leq 1$ we let $g_s = sg_1 + (1-s)g_0$, which, if $g_1$ is taken sufficiently close to $g_0$, also satisfies $\{g_s\} \subseteq \mathcal{U} \cap \G_\Or^k$. Clearly, the map $[0,1] \to \G^k$ given by $s \mapsto g_s$ is continuous. Also note that, by Proposition \ref{localpert} (2), $\Or$ is not only a closed orbit of $\varphi_{g_s}$ for all $s \in [0,1]$, but it in fact has the same arc-length parametrization with respect to all $g_s$.
	
	For the geodesic flow of $g_0$, fix $w$ a transverse homoclinic point of $v$, i.e., $w \in W^{u}(v) \cap W^{cs}(v)$. Fix some $\eps > 0$ so that the geodesic flow has local product structure at scale $2\eps$. Then there exists $t_1, t_2 > 0$ such that $\varphi^{-t_2}_{g_0}(w) \in W^u_{\eps}(v)$, $\varphi^{t_1}_{g_0}(w) \in W^s_{\eps}(v)$ and also a $C> 0$ such that for all $t > 0$: $$d(\varphi^{-(t_2+t)}_{g_0}(w), \varphi_{g_0}^{-t}(v)) < C\eps e^{-t},$$ $$d(\varphi^{t_1+t}_{g_0}(w), \varphi_{g_0}^t(v)) < C\eps e^{-t}.$$
	
	Hence for $n \in \N$ the $\gamma_n: \R \to SM$ given by $$\gamma_n(t) = \varphi^{\tilde{t}-(t_2+n\ell)}_{g_0}(w), \text{ where }\tilde{t} = t \text{ mod } (t_2 + t_1 + 2n \ell)$$ are $\eps_n$-pseudo-orbits where $\eps_n < 2C\eps e^{-n\ell}$, by the fact that the minimal expansion of the geodesic flow is $\tau = 1$ by the assumption on curvature. 
	
	For $n$ sufficiently large, there exist unique periodic $w_n$'s which $L\eps_n$-shadow $\gamma_n$. Let $w_{n,s}$ be continuations of $w_n$ for $0 \leq s \leq 1$ (where $w_{n,0} = w_n$, by definition). Let $w_s$ be the hyperbolic continuations of $w$. By uniqueness of shadowing, note that the $w_{n,s}$ can also be constructed by shadowing segments of the orbit of $w_s$. The following proposition shows we can extend the dominated splitting of $\Or$ to the new orbits we defined:
	
	\begin{lemma} \label{domsplit}
	   There exists $N$ large so that for each $0 < s < 1$ the compact invariant set 
		$$K_{N,s} = \bigcup_{n \geq N} \mathcal{O}(w_{n,s}) \cup \Or(w_s) \cup \Or,$$
		for the geodesic flow $\varphi_{g_s}$ of $g_s$ admits a  dominated splitting for the bundle $E^u = E^{-}_{s,1} \oplus \dots \oplus E^{-}_{s,k}$ over $K_{m,s}$ coinciding with the dominated splitting of $E^u$ over $\mathcal{O}$, and similarly for $E^s = E^{+}_{s,1} \oplus \dots \oplus E^{+}_{s,k}$.
	\end{lemma}
	
	\begin{proof}[Proof sketch. See \cite{bv}, Lemma 9.2]
		We sketch the proof for $s = 0$ which is almost identical to the result cited. Then since dominated splittings over compact invariant sets persists under $C^1$-small perturbations by an invariant cone argument, this shows the result for all $s \in [0,1]$. Consider the case of $E^u$.
		
		Since $w\in  W^{u}(v) \cap W^{cs}(v)$, one can extend the dominated splitting of $\Or$ to $\mathcal{O}(w)$ as follows. Consider the bundles over $\mathcal{O}$ given by $F^i =  E^{-}_{1} \oplus \dots \oplus E^{-}_{j+1}$, and $G^i = E^{-}_{j} \oplus \dots \oplus E^{-}_{k}$ for $i,j= 1, ..., k-1$. Then we define 		
		$$E^j(w) : = \phi^{cs}_{v, w} F^j(v) \cap \phi^{cu}_{v,w}G^j(v)$$
		and extend the $E^j$ bundles to $\Or(w)$ by the derivative of the flow. Proof of continuity and domination of this splitting follows closely that in \cite{bv}.
		
	    For $N$ sufficiently large we observe that $K_{N,0}$ is contained in an arbitrarily small neighborhood of $\Or \cup \Or(w)$, so the dominated splitting extends by continuity.
	\end{proof} 
	
	For each $n$, we let $\theta_{n}: [0,1] \to S^1 = \R/2\pi\Z$ be defined by setting $\theta_n(s)$ to be the argument of the eigenvalue of $D\varphi_{g_s}$ along $E_{s,i}^{-}$ on the closed orbit $w_{n,s}$. By Lemma \ref{expscont}, the $\theta_n$ are continuous so for each $n$ they may be lifted to some $\tilde{\theta}_n: [0,1] \to \R$.  
	
	The main result about these rotation numbers, whose proof is postponed to the next section due to its length, is:
	
	\begin{lemma} \label{rotationlemma}
		There exists $n \in \N$ so that $|\tilde{\theta}_n(1) - \tilde{\theta}_n (0)| > 2\pi $. 
	\end{lemma}
	
	By continuity one then finds $n,s$ such that $\tilde{\theta}_{n,s}$ is an integer multiple of $2\pi$, i.e., such that the eigenvalues in $\exps^u(\Or(w_{n,s}), g_s)$ corresponding to the subspace $E^-_{i,s}$ are real. By another perturbation using Corollary \ref{perturbclosed}, there exists a metric such that these eigenvalues become distinct. Then by induction on the other eigenspaces with complex eigenvalues, all eigenvalues are real and distinct.

	To finish the proof, openness follows again by Lemma \ref{expscont}, since the requirements on the products of the eigenvalues is an open condition. 
	\end{proof}

\subsection{Proof of Lemma \ref{rotationlemma}} \label{ssec:lemma} We apply the notions introduced in Section \ref{ssec:rotation} to give a proof of Lemma \ref{rotationlemma}.

\begin{proof} [Proof of Lemma \ref{rotationlemma}]

Fix $N$ large enough so that $K_{N,s}$ satisfies the conclusion of Lemma \ref{domsplit}.  We begin with:

\begin{proposition} \label{trivialization}
	There exists $N' > N$, which we denote by $N$ after this proposition, such that the bundles with total spaces $E_s$ defined by the fibers $E_s(x) := E_{s,i}^-(x)$ over $x \in K_{N',s}$ are continuously trivializable, for each $s \in [0,1]$. 
\end{proposition}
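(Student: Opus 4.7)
The plan is to reduce continuous trivializability of the 2-dimensional real bundle $E_s$ to orientability of $E_s$ over $K_{N',s}$, and then verify orientability by extending an orientation from the periodic orbit $\Or$ outwards. Since $E_{s,i}^-$ corresponds (by the choice of index $i$) to a pair of complex-conjugate eigenvalues of the first-return map on $\Or$, and this persists for all $s\in[0,1]$ by shrinking $\cU$ and by continuity of eigenvalues (Lemma \ref{expscont}), the monodromy $D\varphi_{g_s}^{\ell}|_{E_{s,i}^-(v)}$ has positive determinant and thus preserves orientation. This fixes a continuous orientation $\mathfrak{o}_s$ on $E_s|_\Or$.

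Next, I would extend $\mathfrak{o}_s$ along the homoclinic orbit $\Or(w_s)$. Because $\Or(w_s)$ is diffeomorphic to $\mathbb{R}$ and $E_s$ is continuous on $K_{N,s}$, there is no local obstruction; the only potential issue is compatibility of the two asymptotic orientations as $t\to\pm\infty$. But $\varphi^t_{g_s}(w_s)$ accumulates on $\Or$ at both ends, and by continuity of $E_s$ the asymptotic orientations coincide with $\mathfrak{o}_s$ restricted to the corresponding points on $\Or$. Hence $\mathfrak{o}_s$ extends continuously to a well-defined orientation on the compact set $\Or\cup\overline{\Or(w_s)}$.

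For the closed orbits $\Or(w_{n,s})$, I would use the shadowing estimate from Theorem \ref{closinglemma}, which gives $d(w_{n,s},\gamma_n(0))\leq L\varepsilon_n$ with $\varepsilon_n\to 0$ uniformly in $s\in[0,1]$ (by continuity in $s$ and compactness). Hence for $N'$ sufficiently large and every $n\geq N'$, the orbit $\Or(w_{n,s})$ lies in an arbitrarily small tubular neighborhood of $\Or\cup\overline{\Or(w_s)}$. Using uniform continuity of the bundle $E_s$ on the compact set $K_{N,s}$, one pulls back the orientation $\mathfrak{o}_s$ pointwise to $\Or(w_{n,s})$. The monodromy of $E_s$ around $\Or(w_{n,s})$ is a small perturbation of a composition of iterates of the monodromy on $\Or$ (orientation-preserving) with parallel transport along $\Or(w_s)$ (also orientation-preserving, by the previous paragraph); therefore it closes up consistently and the pulled-back orientation is globally well-defined on $\Or(w_{n,s})$.

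Finally, a continuous orientation on a 2-dimensional continuous real vector bundle over the (essentially 1-dimensional) compact set $K_{N',s}$ yields a reduction of structure group to $SO(2)$, which is trivial since $K_{N',s}$ has cohomological dimension $\leq 1$ and $SO(2)$ is connected; equivalently, a continuous orientation together with an ambient Riemannian metric on $SM$ lets one pick a continuous oriented orthonormal frame, proving triviality. Uniformity of the choice of $N'$ in $s\in[0,1]$ follows from joint continuity of the data in $(x,s)$ and compactness of $[0,1]$. The main obstacle is the third step, which requires that the shadowing and continuity estimates be uniform in $s$ and that the orientation on $\Or(w_{n,s})$ genuinely close up after one period; handling this cleanly is the heart of the argument.
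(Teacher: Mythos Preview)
Your overall strategy---reduce to orientability of the 2-plane bundle and then argue triviality---is the same as the paper's, and your observation that the monodromy on $\Or$ preserves orientation (complex eigenvalues) is exactly how the paper starts. But there is a genuine gap in your second paragraph, and it is the heart of the matter.

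You write that along $\Or(w_s)$ ``the asymptotic orientations coincide with $\mathfrak{o}_s$ restricted to the corresponding points on $\Or$'' by continuity of $E_s$. This is not a proof. If you extend $\mathfrak{o}_s$ from $\Or$ along the $-\infty$ end of $\Or(w_s)$, continuity at the $+\infty$ end only tells you that the transported orientation agrees with \emph{one of} $\pm\mathfrak{o}_s$ on $\Or$; it does not tell you which. Deciding that sign is precisely the obstruction to extending the orientation continuously to $\Or\cup\overline{\Or(w_s)}$, and nothing you have said rules out the bad case. The paper settles this by a parity argument that runs in the \emph{opposite} logical direction from yours: assume the two ends have opposite sign, so any section of the determinant line $L=\bigwedge^2 E_0$ over $\Or(w)$ has an odd number of zeros; push this section to $\Or(w_n)$ for $n$ large (where $\Or(w_n)$ is $C^0$-close to $\Or(w)$ away from $\Or$ and is handled near $\Or$ by parallel transport of the frame on $\Or$), obtaining a global section of $L|_{\Or(w_n)}$ with an odd number of zeros; this contradicts the triviality of $L$ over $\Or(w_n)$, which you already know from the complex-eigenvalue/orientation-preserving monodromy. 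In other words, the paper uses orientability over the periodic orbits $\Or(w_n)$ to \emph{prove} the compatibility along the homoclinic orbit, whereas you assume the homoclinic compatibility and then deduce the periodic case. Your third paragraph therefore rests on an unproven claim.

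A smaller point: your final step (orientable rank-2 bundle over a ``1-dimensional'' compact set is trivial) is morally fine but stated loosely; the paper avoids any cohomological-dimension assertion by explicitly building a nonvanishing frame section, first on a tubular neighborhood of $\Or$ via parallel transport, then extending along $\Or(w)$ once the sign compatibility is established, and finally pushing to $\Or(w_n)$ for $n$ large. If you want to keep your formulation, you should at least say why $K_{N',s}$ has vanishing $H^2$ (it is a compact subset of a flow whose orbits are at most one-dimensional), or replace it by the explicit frame construction.
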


\begin{proof}

First, note that it suffices to prove that $E_s$ is trivializable for $s = 0$, since the bundles $E_s$ vary continuously in the ambient space $TSM$ as $s$ varies.  

We will construct a non-vanishing section of the frame bundle $F$ associated to $E_0$ over some $K_{N',0}$ for $N'$ large, which is equivalent to a continuous choice of basis for $E_0$, proving triviality of the bundle. 

\begin{figure}[ht]
	\begin{center}
		\includegraphics[scale=.18 ]{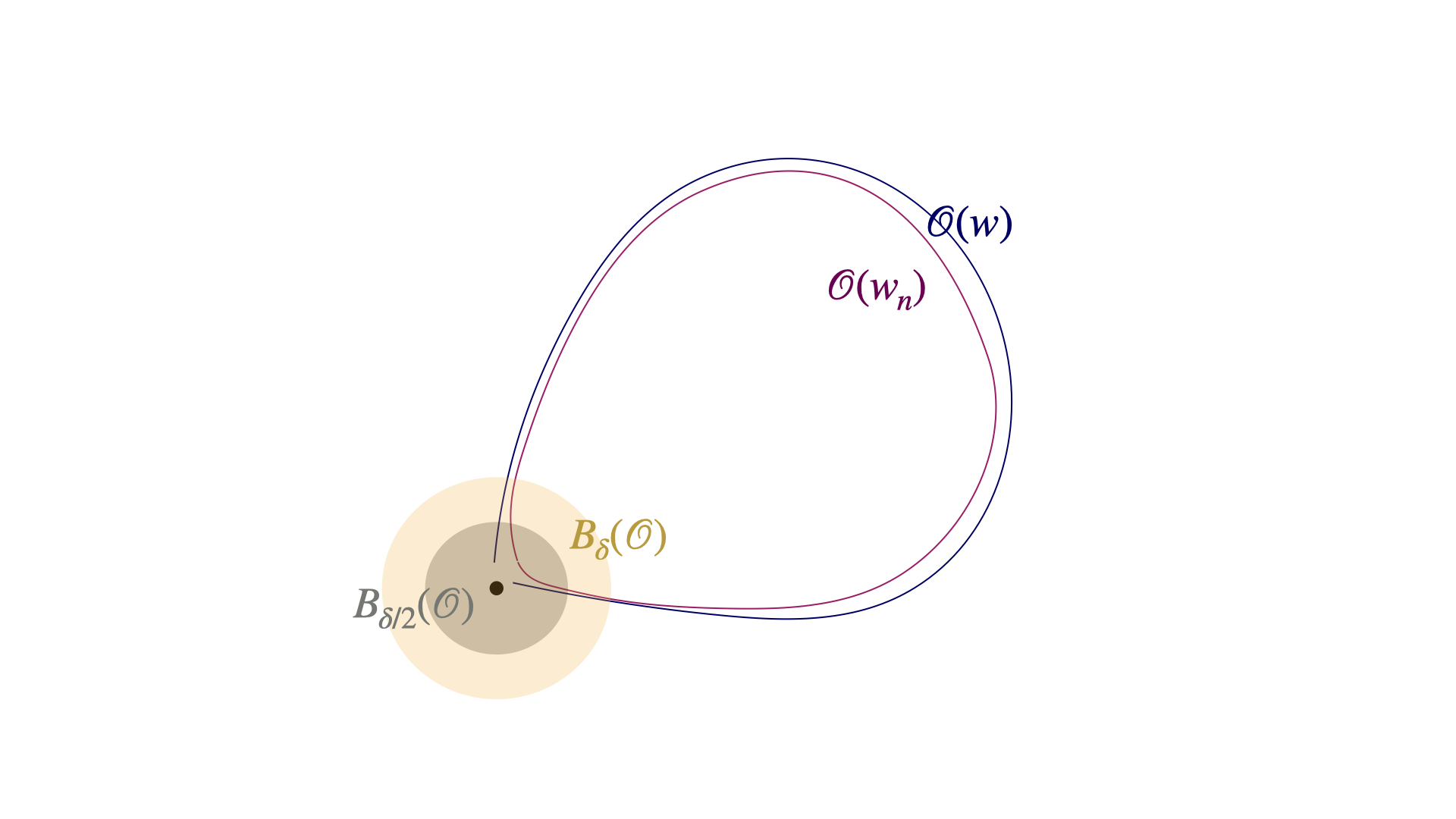}
	\end{center}
	\caption{Proof of Proposition \ref{trivialization}. The closed orbit $\Or$ is schematically represented by the black dot.}
\end{figure}

For $\delta > 0$ let $B_\delta(\Or)$ a $\delta$-tubular neighborhood of $\Or$. If  $\delta$ is sufficiently small relative to the scale of local product structure of the Anosov flow, for all $n \geq N'$, and $N'$ sufficiently large, $\Or(w_n)_\delta := B_\delta(\Or) \cap \Or(w_{n})$ consists of a connected segment of the embedded circle $\Or(w_{n})$ and moreover, $\Or(w)_\delta := B_\delta(\Or) \cap \Or(w)$ consists of the complement of a connected closed interval in $\Or(w)$, i.e.,  two immersed connected components (see Figure 1).

Note that we may assume that the return map of $D\varphi_{g_0}^{\ell(w_n)}$ is orientation preserving on $E_0$ over any periodic orbit $\Or(w_n)$, since otherwise it would have real eigenvalues (any $A \in \text{GL}(2, \R)$ with negative determinant has real eigenvalues) and we would obtain a proof of Lemma \ref{rotationlemma}. Hence, the bundle $E_0$ is trivializable over any $\Or(w_{n})$. It is also clearly so over $\Or(w)$, since it is an immersed real line, and we may assume it is too for $\Or$, since otherwise, again, we would have real eigenvalues. 

By shrinking $\delta$ further if necessary, there exists a well-defined closest point projection $p: B_\delta(\Or) \to \Or$ which is a surjective submersion. Fix a trivialization of $E_0$ over $\Or$, i.e., a non-vanishing section $S: \Or \to F$, which is possible by the previous paragraph. 

For $x \in B_\delta(\Or) \cap K_{N', 0} =: K_\delta$, again shrinking $\delta$ further if necessary, there exists a unique length-minimizing geodesic segment between $x$ and $p(x)$, and by parallel transporting $E_0(p(x))$ along such segments and then projecting orthogonally onto $E_0(x)$ one obtains a continuous bundle map $E_0|_{K_\delta}  \to E_0|_\Or$ which is an isomorphism on fibers. This map induces a map $F|_{K_\delta}  \to F_\Or$ and so by pulling back the non-vanishing section $S: \Or \to F$ we obtain a non-vanishing section, which we now denote by $S: K_\delta \to F$ since its restriction to $\Or$ agrees with the previous $S$, of $F$ over $K_\delta$.

Recall that $\Or(w)_\delta$ consists of two connected immersed components homeomorphic to $\R$. Since $\Or(w)$ is contractible, it is possible to define a determinant on $F|_{\Or(w)}$; up to scalar it is unique, and hence there is a well defined continuous sign function on each fiber. Then we claim that $S|_{\Or(w)_\delta}$ has the same determinant sign on both components, so that it may be extended to a continuous section $\Or(w) \to F|_{\Or(w)}$. Suppose not for a contradiction.	

 Define the line bundle $L:= \bigwedge^2 E^0$ over $K$, which restricted to individual orbits is trivial since $E_0$ is. At each point $x \in K$ there is a natural map $F(x) \to L(x)$ given by $(e_1, e_2) \mapsto e_1 \wedge e_2$, which extends to a continuous global map $W: F \to L$. Considering the image of $S|_{\Or(w)_\delta}$ under $W$, we obtain a section $\Or(w)_\delta \to L$, which has opposite signs in the two connected components. Let $B: \Or(w) \to L$ be any extension of this section to all of $\Or(w)$; by the previous remark, $B$ must have an odd number of zeros.
 
 By continuity $$\Or(w_n) \setminus B_{\delta/2}(\Or) \to \Or(w) \setminus B_{\delta/2}(\Or)$$ as $n \to \infty$, so by continuity of the bundle for $n$ sufficiently large we can parallel transport the section $B$ on $\Or(w) \setminus B_{\delta/2}(\Or)$ to $\Or(w_n) \setminus B_{\delta/2}(\Or)$ to obtain a section $B_n$ on $\Or(w_n) \setminus B_{\delta/2}(\Or)$  which has the same number of zeros as $B$ on $\Or(w) \setminus B_{\delta/2}(\Or)$, i.e., oddly many. 
 
 On the other hand as $n \to \infty$, $$B_n|_{\Or(w_n)_\delta \setminus B_{\delta/2}(\Or)} \to (W\circ S)|_{\Or(w_n)_\delta \setminus B_{\delta/2}(\Or)},$$ and hence for $n$ large enough $B_n$ has constant sign on  $\Or(w_n)_\delta \setminus B_{\delta/2}(\Or)$. Thus $B_n$ extends to $\Or(w_n)_\delta$ without any zeros. Hence we obtain a global section $B_n$ on $\Or(w_n)$ with an odd number of zeros, contradicting the triviality of $L$ over $\Or(w_n)$. 
 
 Hence we may extend $S|_{\Or(w)_\delta}$ continuously to all of $\Or(w)$. Since in $K_\delta$ the section $S$ is continuous, and again $\Or(w_n) \setminus B_{\delta/2}(\Or) \to \Or(w) \setminus B_{\delta/2}(\Or)$ as $n \to \infty$, we can then continuously extend $S|_{\Or(w)\setminus B_{\delta/2}(\Or)}$ to $\Or(w_n) \setminus B_{\delta/2}(\Or)$ while agreeing with $S$ in $K_\delta$. Since $S|_{\Or(w)}$ is non-vanishing, the section obtained in this way is also globally  non-vanishing.

\end{proof} 


	

The projectivization $\P E_{s}$ of the bundle $E_{s}$ then defines a trivial circle bundle over $K_{N,s}$, and we fix a trivializing bundle isomorphism $\phi_{s}: \P E_s \to K_{N,s} \times S^1$. By conjugating with $\phi_{s}$, the derivative of the geodesic flow then defines a continuous cocycle $\cA_{s}$ on $K_{N,s} \times S^1$ over the geodesic flow, so we may apply the results of Section \ref{ssec:rotation} for $\cA_{s}$. 

Then the the rotation numbers have the following characterization over periodic orbits:
\begin{lemma} \label{rotform}
	For a closed orbit $\Or(u)$ of a point $u \in K_{N,s}$, the argument $\theta(u)$ of the eigenvalue  of the return map of the geodesic flow on $E^-_{s,i}$ satisfies:
	$$\theta(u) = \ell(u) \cdot \rho_{\Or(u)} \, (\text{mod} \, \, 2\pi)$$
	where $\ell(u)$ is the period of $u$, and $\rho_{\Or(u)}$ is as in Remark \ref{closedrotationdefinition} for the cocycle $\cA_{s}$ defined above.
\end{lemma}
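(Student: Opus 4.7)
The plan is to relate both sides of the claimed identity to the classical Poincar\'e rotation number of the return map of the cocycle $\cA_s$ on the fiber $S^1$ over $u$: I will show that the argument $\theta(u)$ equals this Poincar\'e rotation number modulo $2\pi$, and that $\ell(u) \cdot \rho_{\Or(u)}$ also equals it modulo $2\pi$.

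Write $\ell := \ell(u)$ and let $R \colon S^1 \to S^1$ denote the return map $A_u^\ell$ of the cocycle $\cA_s$ on the fiber over $u$. Through the trivialization $\phi_s$, the map $R$ is identified with the projectivization of $D\varphi_{g_s}^\ell|_{E^-_{s,i}(u)}$, a linear map on $\R^2$ with eigenvalues $re^{\pm i \theta(u)}$. Choosing a basis realizing the real Jordan form, this projectivization is conjugate to a rotation of $S^1$, and hence its classical Poincar\'e rotation number equals $\theta(u)$ modulo $2\pi$.

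Let $\mu$ denote the (ergodic) invariant probability measure supported on $\Or(u)$. By Kingman's subadditive ergodic theorem recorded in Section \ref{ssec:rotation} (cf.\ \eqref{eq:subad}), for $\mu$-a.e.\ point the limit
$$\rho(x) = \lim_{t\to\infty} \frac{\tilde{w}_{x,\theta_0}(t)}{t}$$
exists and equals $\rho_\mu = \rho_{\Or(u)}$. Evaluating along the subsequence $t = n\ell$ and using the cocycle identity $A_u^{n\ell} = R^n$, I obtain
$$\rho_{\Or(u)} = \lim_{n\to\infty} \frac{\tilde{R}^n(\theta_0) - \theta_0}{n \ell} = \frac{\rho_{\mathrm{Poinc}}(R)}{\ell},$$
where $\tilde{R}$ is the lift of $R$ determined by the continuous path $\tilde{w}_{u,\theta_0}|_{[0,\ell]}$. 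Multiplying through and combining with the previous paragraph yields $\theta(u) = \ell(u) \cdot \rho_{\Or(u)}$ modulo $2\pi$, as required.

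The main obstacle is the bookkeeping of identifications: one must verify that the trivialization $\phi_s$ identifies the fiber of $\P E^-_{s,i}$ over $u$ with $S^1 = \R/2\pi\Z$ in a way consistent with the convention under which the eigenvalue argument $\theta(u)$ is regarded as an element of $\R/2\pi\Z$, without a spurious factor of $2$ arising from the usual identification of the projective line with $\R/\pi\Z$. The orientability of $E^-_{s,i}$ established in the proof of Proposition \ref{trivialization} is exactly what allows a consistent such choice to be fixed from the outset, after which the argument reduces to the elementary computations above.
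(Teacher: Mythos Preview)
Your proof is correct and follows essentially the same approach as the paper: both arguments identify each side of the claimed identity with the classical Poincar\'e rotation number of the return map $(A_s)_u^{\ell(u)}$ on the fiber $S^1$, using conjugation invariance to match the projectivized derivative with the trivialized cocycle. Your write-up is somewhat more explicit than the paper's (which simply asserts that $\ell(u)\rho_{\Or(u)}$ agrees mod $2\pi$ with the Poincar\'e rotation number ``from the definition of $\rho$''), and your remark on the $\R P^1$ versus $S^1$ identification is a legitimate bookkeeping point that the paper glosses over.
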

\begin{proof}
	On one hand, it follows from the definition of $\rho$ that $ \ell(u) \cdot \rho_{\Or(u)}$ agrees mod $2\pi$ with the Poincaré rotation number for the map $$(A_s)_u^{\ell(u)}: S^1 \to S^1.$$
	
	On the other, the projectivization of the derivative of the flow also defines on the fiber a homeomorphism $S^1 \to S^1$ with Poincaré rotation number equal to the argument of the eigenvalue of the derivative. 
	
	Since the two above differ by a conjugation given by $\pi_2\circ \phi_{s}(u, \cdot): S^1 \to S^1$, where $\pi_2: K_{N,s} \times S^1$ is the natural projection, by invariance we obtain the result.
\end{proof}

Applying Lemma \ref{rotform} to the $\theta_n(s)$, we obtain for $0 \leq s \leq 1$: 

$$\theta_n(s) = \ell(w_{n,s})\rho_{\Or(w_{n,s})}\,  (\text{mod}\, 2\pi),$$ 

By continuity of the functions $\theta_n$ we may lift them to $\tilde{\theta}_n: [0,1] \to \R$ satisfying $\tilde{\theta}_n(0) = \ell(w_{n,0})\rho_{\Or(w_{n,0})}.$
By the continuity of $\rho_{\Or(w_{n,s})}$ in $s$, given by Proposition \ref{rotcont2}, our choice of lift then implies:
$$\tilde{\theta}_n(s) = \ell(w_{n,s})\rho_{\Or(w_{n,s})}, \text { for } 0 \leq s \leq 1. $$

Let $\theta(s)$ be the argument of the eigenvalue of the $D\varphi_{g_s}$ on $E^-_{i,s}$ on the periodic orbit $\Or$ (recall $\Or$ is a closed geodesic for all $g_s$ with $\ell(\Or)$ fixed), and repeat the constructions above to obtain $\tilde{\theta}(s)$ as well satisfying
\begin{equation} \label{eq: thetaellrho}
\tilde{\theta}(s) = \ell(\Or) \rho_\Or(s),
\end{equation}
where $\rho_{\Or}(s)$ is $\rho_\Or$ of the geodesic flow of $g_s$.

Since $\mu_{\Or(w_{n,s})}  \to \mu_{\Or}$ (where $\mu_\Or$ is the invariant probability measure supported on the closed orbit $\Or$) we have $\rho_{\Or(w_{n,s})} \to \rho_{\Or}(s)$ as $ n \to \infty$ by Theorem \ref{rotationnumberiscontinuous}.  By hypothesis $\theta(1) \neq \theta(0)$, and since $\ell(\Or)$ is constant as $s$ varies, Equation (\ref{eq: thetaellrho}) gives that $\rho_\Or(1) - \rho_\Or(0) \neq 0$. Hence for $n$ large enough there exists some $\delta > 0$ such that $|\rho_{\Or(w_{n,1})}- \rho_{\Or(w_{n,0})}|  \geq \delta$.

Finally, let $\delta_n = |\ell(w_{n,1}) - \ell(w_{n,0})|$. Again, we defer the proof of the following final proposition we need:

\begin{proposition} \label{boundlength}
	There exists $M_2 > 0$ such that $\delta_n < M_2$ for all $n \in \N$.
\end{proposition}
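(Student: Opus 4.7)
The plan is to show $\ell(w_{n,s}) = 2n\ell(\Or) + O(1)$ uniformly in both $n$ and $s$, which immediately yields the bound on $\delta_n$. First, I would exploit Proposition \ref{localpert}: since $g_s - g_0$ is supported in a tubular neighborhood of $\Or$ and $\Or$ has the same arc-length parametrization for every $g_s$, the period $\ell(\Or)$ is constant in $s$. By persistence of transverse homoclinic intersections, $w$ admits a continuation $w_s$ as a transverse homoclinic point of $\Or$ for $\varphi_{g_s}$, and there exist continuous $t_1^s, t_2^s > 0$ playing the role that $t_1, t_2$ play for $g_0$, i.e.\ with $\varphi_{g_s}^{-t_2^s}(w_s) \in W^u_\eps(v)$ and $\varphi_{g_s}^{t_1^s}(w_s) \in W^s_\eps(v)$. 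By compactness of $[0,1]$, there is $T_0$ such that $t_1^s + t_2^s \leq T_0$ for every $s$.

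Next, for each $s \in [0,1]$ and $n \in \N$ I would construct pseudo-orbits $\gamma_n^s : \R \to SM$ of $\varphi_{g_s}$ by the same recipe as $\gamma_n = \gamma_n^0$, with $w, t_1, t_2$ replaced by $w_s, t_1^s, t_2^s$ and using the fact from Proposition \ref{localpert} that the piece of the pseudo-orbit running along $\Or$ is a genuine orbit of $\varphi_{g_s}$. Continuity of the stable and unstable laminations of $\Or$ over the compact family $\{\varphi_{g_s}\}$ gives uniform constants with $\gamma_n^s$ an $\eps_n$-pseudo-orbit, $\eps_n \to 0$ exponentially in $n$ uniformly in $s$, whose period is $T_n^s := t_1^s + t_2^s + 2n\ell(\Or)$, satisfying $|T_n^s - 2n\ell(\Or)| \leq T_0$.

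Applying Theorem \ref{closinglemma} with uniform constants $L, \eps_0$ over the compact family $\{\varphi_{g_s}\}_{s \in [0,1]}$, for all sufficiently large $n$ each $\gamma_n^s$ is $L\eps_n$-shadowed by a unique closed orbit $\tilde{w}_{n,s}$. The Lipschitz bound $L\eps_n$ on the time-rescaling $\alpha(t) - t$ in the definition of shadowing gives
$$|\ell(\tilde{w}_{n,s}) - T_n^s| \leq L\eps_n T_n^s + L\eps_n,$$
and since $\eps_n T_n^s = O(n e^{-n\ell(\Or)}) \to 0$, the right-hand side is uniformly bounded in $n$ and $s$. To identify $\tilde{w}_{n,s}$ with $w_{n,s}$, I would observe that $\tilde{w}_{n,0} = w_{n,0}$ by construction, and that both $s \mapsto \tilde{w}_{n,s}$ and $s \mapsto w_{n,s}$ vary continuously in $s$ (the former by continuity of the shadowing within a fixed small tube, the latter by hyperbolic persistence). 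The uniqueness clause of Theorem \ref{closinglemma} then forces $\tilde{w}_{n,s} = w_{n,s}$ for every $s \in [0,1]$.

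Combining gives $|\ell(w_{n,s}) - 2n\ell(\Or)| \leq T_0 + L\eps_n T_n^s + L\eps_n$, uniformly bounded in $n,s$, and hence $\delta_n = |\ell(w_{n,1}) - \ell(w_{n,0})|$ is uniformly bounded by some $M_2$. The main technical obstacle is the uniformity of the Anosov Closing Lemma constants and the identification $\tilde{w}_{n,s} = w_{n,s}$; this reduces to showing that the hyperbolic structure of the set $K_{N,s}$ (cf.\ Lemma \ref{domsplit}) persists with uniform constants as $s$ varies in the compact interval $[0,1]$, which follows from the $C^1$-stability argument used in that proof.
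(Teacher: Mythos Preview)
Your approach is correct and is genuinely different from the paper's. The paper does \emph{not} bound $\ell(w_{n,s})-2n\ell(\Or)$ directly. Instead it fixes the single flow $\varphi_{g_0}$, invokes structural stability to obtain an orbit conjugacy $h$ between $\varphi_{g_0}$ and $\varphi_{g_1}$ and the associated H\"older \emph{geodesic stretch} $a:SM\to\R$ with $dh(u)X_{g_0}(u)=a(u)X_{g_1}(h(u))$, and writes $\ell(w_{n,1})=\int_0^{T_n} a(\varphi_{g_0}^t(w_n))\,dt$. Since Proposition~\ref{localpert}(2) forces $a|_\Or\equiv 1$, the difference $\delta_n$ becomes $\int_0^{T_n}|a(\varphi_{g_0}^t(w_n))-1|\,dt$, which is then bounded using H\"older continuity of $a$ together with an \emph{exponential} shadowing theorem (Theorem~6.2.4 in \cite{fh}) giving $d(\varphi_{g_0}^{t}(w_n),\Or)\leq c\,e^{-\eta(T_n/2-|t|)}+c\,e^{-|t|}$; the resulting integral of $(\text{exponential decay})^\beta$ is uniformly bounded.

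By contrast, you work with the whole family $\{\varphi_{g_s}\}_{s\in[0,1]}$, reconstruct the $w_{n,s}$ as shadowing orbits of pseudo-orbits $\gamma_n^s$ of controlled period $T_n^s=t_1^s+t_2^s+2n\ell(\Or)$, and read off $|\ell(w_{n,s})-T_n^s|\leq L\eps_n T_n^s$ from the Lipschitz bound on the time-rescaling in the Closing Lemma. Your route avoids the geodesic stretch and the sharper exponential shadowing estimate entirely; its cost is that you must justify uniformity of the closing constants over $s\in[0,1]$ and the identification $\tilde w_{n,s}=w_{n,s}$ (the latter is in fact already noted in the paper just before Lemma~\ref{domsplit}). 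The paper's route stays inside a single dynamical system and gives a sharper pointwise estimate on the integrand, at the price of importing the H\"older regularity of the structural-stability conjugacy. Both arguments hinge on the same consequence of Proposition~\ref{localpert}(2), that $\ell(\Or)$ and the parametrization of $\Or$ are unchanged along the family $g_s$.
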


With Lemma \ref{boundlength}, we complete the proof of Lemma \ref{rotationlemma}:
$$
\begin{aligned}
|\tilde{\theta}_n(1) -  \tilde{\theta}_n(0)| &= |\ell(w_{n,1})\tilde{\rho}_{\Or(w_{n,1})} - \ell(w_{n,0})\tilde{\rho}_{\Or(w_{n,0})}|  \\
&\geq |\ell(w_{n,1})(\tilde{\rho}_{\Or(w_{n,1})} - \tilde{\rho}_{\Or(w_{n,0})}) | \\
 &\,\,\,- |(\ell(w_{n,1})-\ell(w_{n,0}))\tilde{\rho}_{\Or(w_{n,0})} |\\
&\geq \delta \ell(w_{n,1}) - \delta_n |\tilde{\rho}_{\Or(w_{n,0})} | \\ 
&> \delta \ell(w_{n,1}) - M _1M_2> 2\pi
\end{aligned}
$$ 
for all $n$ sufficiently large, since $\ell(w_{n,1}) \to \infty$.
\end{proof}

At last, we prove Proposition \ref{boundlength}.

\begin{proof}[Proof of Proposition \ref{boundlength}] 

To bound the variations $\delta_n$, we use exponential shadowing and Hölder continuity of the geodesic stretch, defined below. Since the geodesic flow is unperturbed on $\Or$ and the orbits $\Or(w_{n,s})$ approximate $\Or$, the two mentioned properties give us the bound on $\delta_n$.

 Recall that the $w_n$ are constructed by shadowing $\gamma_n: \R \to SM$ given by $$\gamma_n(t) = \varphi^{\tilde{t}-(t_2+n\ell)}_{g_0}(w), \text{ where }\tilde{t} = t \text{ mod } (t_2 + t_1 + 2n \ell), $$
 which is a $\eps_n$-pseudo-orbit, where $t_1$ (resp. $t_2$) is such that $\varphi^{t_1}_{g_0}(w)$ (resp. $\phi^{-t_2}_{g_0}(w)$) is in $W^s_{\eps}(v)$ (resp. $W^u_{\eps}(v)$) and $\eps_n < 2C\eps e^{-n\ell}$. 
 
 The following well-known theorem is an adaptation for flows of the usual ``exponential" shadowing theorem, which uses the Bowen bracket in its proof. The statement gives a sharper estimate on how well shadowing orbits approximate pseudo-orbits:
 
 \begin{theorem} \cite[Theorem 6.2.4]{fh} For a hyperbolic set $\Lambda$ of a flow $\Phi$ on a closed manifold $\exists c,\eta>0$ such that $\forall \eps >0, \, \exists \delta>0$ so that: if $x,y\in \Lambda$, $s:\R\to\R$ continuous, $s(0) = 0$ and $d(\Phi^t(x),\Phi^{s(t)}(y)) < \delta $ for all $|t| \leq T$, then
 	\begin{enumerate}
 		\item [(1)] $|t - s(t)| < 3 \eps$ for all $|t| \leq T$, 
 		\item [(2)] there exists $t(x,y)$ with $|t(x,y)| < \eps$ so that the $\eps$-stable manifold $\Phi^{t(x,y)}(x)$ intersects uniquely the $\eps$-unstable manifold of $y$ and:
 		$$ d(\Phi^t(y), \Phi^t(\Phi^{t(x,y)} (x))) < c e^{\eta(T-|t|)} \text{ for } |t| < T.$$
 	\end{enumerate}
 \end{theorem}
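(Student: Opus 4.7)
The plan is to deduce both parts from the local product structure of $\Lambda$ encoded in a Bowen-bracket construction for flows, together with the uniform contraction and expansion on the stable and unstable manifolds. The backbone is the following standard consequence of the stable/unstable manifold theorem for hyperbolic sets of flows: for each sufficiently small $\eps_1 > 0$ there exists $\delta_1 = \delta_1(\eps_1) > 0$ such that whenever $p, q \in \Lambda$ satisfy $d(p, q) < \delta_1$, there is a unique $\tau(p, q) \in (-\eps_1, \eps_1)$ for which $W^s_{\eps_1}(\Phi^{\tau(p,q)}(p)) \cap W^u_{\eps_1}(q)$ is a single point $[p, q]$, with both $\tau$ and $[\cdot,\cdot]$ continuous and vanishing on the diagonal.

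For part (1), I would use the expansivity of $\Phi$ on $\Lambda$ via a flow-box argument. Choose $\eps_2 > 0$ so small that each ball of radius $\eps_2$ around a point of $\Lambda$ supports a flow-box splitting the flow direction from the transverse stable/unstable directions, and pick $\delta$ well below $\delta_1(\eps)$, $\eps_2$, and $\eps$. The hypothesis $d(\Phi^t(x), \Phi^{s(t)}(y)) < \delta$ places $\Phi^{s(t)}(y)$ inside the flow-box at $\Phi^t(x)$; the hyperbolic product structure forces the transverse coordinates to be of order $O(\delta)$, and hence the flow-time coordinate of $\Phi^{s(t)}(y)$ in this box, which equals $s(t) - t$ up to smooth corrections of the same order, must satisfy $|s(t) - t| < 3\eps$ uniformly in $|t| \leq T$, once $\delta$ is chosen small enough relative to $\eps$.

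For the existence statement in part (2), I would apply the bracket construction directly to $x$ and $y$: since $d(x, y) = d(\Phi^0(x), \Phi^{s(0)}(y)) < \delta < \delta_1$, setting $t(x, y) := \tau(x, y) \in (-\eps, \eps)$ and $z := [x, y]$ gives, by construction, the unique intersection of $W^s_\eps(\Phi^{t(x,y)}(x))$ with $W^u_\eps(y)$. For the exponential estimate, write $p := \Phi^{t(x,y)}(x)$. Uniform stable contraction gives $d(\Phi^t(z), \Phi^t(p)) \leq C\eps\, e^{-\eta t}$ for $t \geq 0$. The hypothesis together with part (1) yields $d(\Phi^T(p), \Phi^T(y)) = O(\delta)$, so by the triangle inequality $d(\Phi^T(z), \Phi^T(y)) = O(\delta + \eps e^{-\eta T})$; since $z \in W^u_\eps(y)$, unstable contraction in backward time from $T$ down to $t$ multiplies this by $e^{-\eta(T - t)}$, producing $d(\Phi^t(y), \Phi^t(z)) \leq C' e^{-\eta(T - t)}$ for $0 \leq t \leq T$. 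The triangle inequality $d(\Phi^t(y), \Phi^t(p)) \leq d(\Phi^t(y), \Phi^t(z)) + d(\Phi^t(z), \Phi^t(p))$, together with the symmetric argument for $t \leq 0$ exchanging the roles of stable and unstable sides, then delivers the stated exponential bound.

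The main obstacle will be coordinating all the constants so that the flow-time reparameterization in part (1) is compatible with the transverse product structure used in part (2): one must choose $\delta$ small enough relative to the bracket-existence scale $\delta_1$, the flow-box size $\eps_2$, and the hyperbolic constants, so that the Bowen bracket is defined, the expansivity estimate controls $|s(t) - t|$ uniformly across $[-T, T]$ without accumulation, and the exponential tails on the stable and unstable sides glue coherently at both endpoints of the time interval.
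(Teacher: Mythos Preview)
The paper does not give its own proof of this statement; it is quoted verbatim from \cite[Theorem 6.2.4]{fh} and invoked as a black box in the proof of Proposition~\ref{boundlength}. The only hint the paper offers about the argument is the sentence immediately preceding the statement: ``The following well-known theorem is an adaptation for flows of the usual `exponential' shadowing theorem, which uses the Bowen bracket in its proof.'' Your proposal is exactly the Bowen-bracket approach alluded to there, so you are aligned with the cited source.

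One caution on part~(1): the flow-box argument as written is a bit loose. Knowing that $\Phi^{s(t)}(y)$ lies in a flow box around $\Phi^t(x)$ only bounds the \emph{local} flow-time offset at each fixed $t$; it does not by itself rule out drift of $s(t)-t$ as $t$ traverses $[-T,T]$, and the statement requires $\delta$ to depend only on $\eps$, not on $T$. The standard remedy is to reverse the logical order: first perform the bracket construction of part~(2) to produce $z=[x,y]\in W^s_\eps(\Phi^{t(x,y)}(x))\cap W^u_\eps(y)$, then use that the genuine orbit of $z$ simultaneously $\eps$-shadows $\Phi^{t+t(x,y)}(x)$ on the stable side and $\Phi^t(y)$ on the unstable side. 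Comparing both to the hypothesis via the triangle inequality gives a uniform bound on $|s(t)-t|$ with no additive accumulation. Your argument for part~(2) is the standard one and is fine as a sketch.
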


In the context of the current proof, we apply the above theorem as follows. 

Let $T_n = \ell(w_n)$, $x = \varphi^{T_n/2}_{g_0} (w_n)$ and $y = \varphi^{\tau_n}_{g_0} (w)$ where $\tau_n  := T_n/2 - (t_2 + n\ell)$. For $n$ sufficiently large, $d(\varphi^t_{g_0}(x),\varphi^{s(t)}_{g_0}(y)) < \delta$ is satisfied, by the statement of shadowing, for $|t| < T_n/2$ and $\delta$ given by the theorem for the $\eps > 0$ fixed before.  Then the theorem gives a $t_n \in \R$ such that:
$$ d(\varphi^{t_n+t}_{g_0}(w_n),  \varphi^{\tau_n + t}_{g_0} (w) ) <c e^{\eta(T_n/2-|t|)}, \text{ for } |t| < T_n/2.$$

Now we turn to computing the period of $w_{n,1}$ using the facts established above. By structural stability, there exists $h: SM \to SM$ which conjugates the orbits of $\varphi_{g_0}$ to those of $\varphi_{g_1}$. This conjugacy can be taken to be Hölder continuous and $C^1$ along the flow direction. Thus, there exists some $a: SM \to \R$ which is Hölder continuous with some exponent $1 \geq \beta > 0$, such that for $u \in SM$:
$$dh (u) X_{g_0}(u) = a(u) X_g(h(u)),$$
where $X_g$ (resp. $X_{g_0}$) is the vector field generating the geodesic flow for $g$ (resp. $g_0$). The function $a$ is referred to as the \textit{geodesic stretch}, and the proof of the facts above can be found, for instance, in \cite[p. 12-13]{stretch}

The period of $w_{n,1}$ is given by the formula:
$$\ell(w_{n,1}) =  \int_0^{T_n} a(\varphi^t_{g_0}(w_n)) \, dt$$

By Proposition \ref{localpert} (2), since $\Or$ is a closed geodesic, with same arclength parametrization for $g_0$ and $g_1$, it is clear that $a|_\Or \equiv 1$.  Therefore, we may compute the difference $\delta_n = |\ell(w_{n,1})  - \ell(w_{n,0})|$ as follows:
$$ \begin{aligned}
|\ell(w_{n,1})  - \ell(w_{n,0})| &\leq \int_{-T_n/2}^{T_n/2} |a(\varphi^t_{g_0}(w_n)) - 1| \, dt \\
&\leq M \int_{-T_n/2}^{T_n/2} d(\varphi^{t_n+t}_{g_0}(w_n),  \Or)^\beta \, dt,
\end{aligned}$$
since $a$ is $\beta$-Hölder continuous and the distance between a point and a compact set is well defined. To estimate the distance, note:
 $$\begin{aligned} d(\varphi^{t_n+t}_{g_0}(w_n),  \Or) &\leq  d(\varphi^{t_n+t}_{g_0}(w_n),  \varphi^{\tau_n+ t}_{g_0} (w) )  + d(\varphi^{\tau_n+ t}_{g_0} (w) , \Or) \\
 &\leq  c (e^{\eta(T_n/2-|t|)} + e^{-|t|}), \text{ for } |t| < T_n/2,
 \end{aligned},$$
 since $w$ is a homoclinic point of $\Or$ so $d(\varphi^{\tau_n+ t}_{g_0} (w) , \Or) \leq ce^{-|t|}$ for some $c > 0$ which we assume, by taking the max if necessary, is the same as the previous $c$. Substituting this inequality into the previous integral, we obtain:
 $$|\ell(w_{n,1})  - \ell(w_{n,0})| \leq M\int_{-T_n/2}^{T_n/2}  (e^{\eta(T_n/2-|t|)} + e^{-|t|})^\beta \, dt < M_2 < \infty, $$
 for $M_2$ independent of $n$, as an easy calculus exercise shows.
\end{proof}

\subsection{Twisting}Following the previous section, we fix a metric $g_0 \in \G_p^k$. Let $\Or$ be the orbit with the pinching property, $v \in \Or$ and $l$ the period of $\Or$.We fix an arbitrary $w \in W^{cs}_{g_0}(v) \cap W^{cu}_{g_0}(v)$ a transverse homoclinic point of the orbit of $v$, and consider the holonomy maps $$\psi^{g_0}_{v,w} = h^{cs}_{w,v} \circ h^{cu}_{v,w}$$ given from Theorem \ref{holsflow}, for the unstable bundle $E^u$. Recall that $\exps^u(\Or, g)$ consists of distinct real numbers, so let $\{e_i\}$ be an (non-generalized, real) eigenbasis for $E^u$. For all $1 \leq j \leq k$ the alternating powers $\Lambda^j E^u(v)$ have a basis obtained as exterior products of the $e_i$. We write $e_I^k := e_{i_1} \wedge \dots \wedge e_{i_k}$, where $I = \{i_1, ..., i_k\}$.

\begin{proposition} \label{twist}
	
		For $g_0 \in \G_p^k$ as above we say $g_0$ has the \textit{twisting property} for $w \in SM$ with respect to $v$, and we write $g_0 \in \G_{p,t}^k$, if
	$$\forall e^k_I, e^l_{I'}, k + l  = n: (\wedge^k \psi^{g_0}_{v,w}) (e^k_I) \wedge e^l_{I'} \neq 0, $$
	which is to say that the image of any direct sums of eigenspaces intersects any direct sum of eigenspaces of complementary dimension only at the origin.
	
	The set $\G_{p,t}^k$ is $C^2$-open and $C^k$-dense in $\G^k$.
\end{proposition}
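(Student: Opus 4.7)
The plan mirrors Proposition \ref{pinch}: openness via continuous dependence, density via a localized Klingenberg--Takens perturbation supported off the pinching orbit $\Or$. For openness, fix $g_0 \in \G_{p,t}^k$ with pinching orbit $\Or$ and transverse homoclinic point $w$. For $g$ sufficiently $C^2$-close to $g_0$, the continuations $\Or_g, v_g, w_g$ persist and the eigenbasis $\{e_i(g)\}$ of $D\varphi_g^\ell|_{E^u(v_g)}$ varies continuously by Lemma \ref{expscont}; the holonomies composing $\psi^g_{v_g, w_g}$ vary continuously by Theorem \ref{holsflow}. Each twisting condition is the non-vanishing of a multilinear functional in this data, hence open; combined with Proposition \ref{pinch}, $\G_{p,t}^k$ is $C^2$-open.

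For density, by Proposition \ref{pinch} we may start from $g_0 \in \G_p^\infty$. Fix the pinching orbit $\Or$, $v \in \Or$, and a transverse homoclinic point $w$. Pick a point $q \in \Or(w) \setminus \Or$ and a tubular neighborhood $V$ of $q$ disjoint from $\Or$ and meeting $\Or(w)$ in a single arc through $q$. By Proposition \ref{localpert}, any perturbation of $g_0$ supported in $V$ fixes $\Or$ together with its arc-length parametrization, so $\exps^u(\Or, g)$ and $\{e_i\}$ are unchanged, and the pinching persists. Fixing transverse sections at the entry and exit of the arc $\Or(w) \cap V$, Theorem \ref{ktmain} together with Remark \ref{1jets} lets us realize the 1-jet of the associated Poincaré map $DP_g|_q \in \mathrm{Sp}(2n)$ as any prescribed element of any open dense invariant subset $Q$, via a $C^k$-small perturbation.

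Since the flow is unchanged outside $V$, the holonomies $h^{cu,g}_{v, w_g}$ and $h^{cs,g}_{w_g, v}$ factor as fixed $g_0$-linear maps composed with a real-analytic function of $DP_g|_q$, yielding a real-analytic map $\Psi : \mathrm{Sp}(2n) \to \mathrm{End}(E^u(v))$ with $\Psi(DP_g|_q) = \psi^g_{v, w_g}$. The twisting property carves out a Zariski-open subset $T \subseteq \mathrm{End}(E^u(v))$ (non-vanishing of finitely many wedge-products in the fixed basis $\{e_i\}$), so it suffices to show that $\Psi^{-1}(T)$ is open and dense in $\mathrm{Sp}(2n)$, for then we may set $Q := \Psi^{-1}(T)$ in the previous paragraph. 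Openness is immediate from continuity of $\Psi$.

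The main obstacle is the non-emptiness (equivalently, Zariski-density) of $\Psi^{-1}(T)$, where the symplectic restriction on $DP_g|_q$ demands care and where the continuous-time setting differs essentially from the linear cocycle setting of \cite{bv}. I would establish this by showing $\Psi$ is a submersion at $M = I$: the derivative $D\Psi|_I : \mathfrak{sp}(2n) \to \mathrm{End}(E^u(v))$ is induced by projecting the infinitesimal action $X \in \mathfrak{sp}(2n)$ onto $E^u(q)$ along the center-stable direction, then conjugating by the unperturbed holonomies. In the $E^u \oplus E^s$ decomposition at $q$ the symplectic Lie algebra contains all block-diagonal generators of the form $X = A \oplus (-A^\top)$, and these realize any $A \in \mathrm{End}(E^u(q))$; composing with the invertible unperturbed transfer maps still yields a surjection onto $\mathrm{End}(E^u(v))$. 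Hence $D\Psi|_I$ is surjective, $\Psi$ is a submersion near $I$, and $\Psi^{-1}(T)$ is open and dense near $I$, completing the argument.
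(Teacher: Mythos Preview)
Your argument is correct and follows the paper's overall architecture (localize the perturbation on a segment of the homoclinic orbit via Theorem~\ref{ktmain} and Proposition~\ref{localpert}, factor $\psi^g_{v,w}$ through the perturbed $1$-jet, and use that twisting is Zariski open). The genuine divergence is in how non-emptiness of the twisting locus inside $\mathrm{Sp}(2n)$ is established. The paper reduces, via the fixed symplectic holonomy conjugation, to exhibiting a single element of $\mathrm{Sp}(2n)$ that preserves the Lagrangian $E^u$ and satisfies the wedge conditions; this is done by an explicit inductive construction (Lemma~\ref{linalg2}) using planar rotations $R_\theta^{i,j}$. You instead differentiate the assignment $\Psi$ at the unperturbed point and observe that the block-diagonal elements $A\oplus(-A^{\top})\in\mathfrak{sp}(2n)$ for the Lagrangian splitting $E^u\oplus E^s$ surject onto $\mathrm{End}(E^u)$, so $\Psi$ is a submersion and $\Psi^{-1}(T)$ is open and dense near the base point. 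Your route is softer and avoids the combinatorics of Lemma~\ref{linalg2}; the paper's route is more concrete and sidesteps any need to justify the smooth/analytic dependence of $\Psi$ on $DP_g$. One point worth tightening in your write-up: the assertion that the outer holonomies ``factor as fixed $g_0$-linear maps composed with a real-analytic function of $DP_g|_q$'' is exactly the content of the paper's paragraph showing that only the middle term $D_w\varphi^\varepsilon_g|_{E^u}$ changes, and deserves at least a sentence of justification (note in particular that block-diagonal perturbations preserve $E^u_{g_0}(\varphi^\varepsilon(w))$, which is what makes your derivative computation clean).
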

\begin{proof}
	
	Again, by density of $\G^\infty \subseteq \G^k$ and openness of $\G_p^k$ we may assume that $g_0 \in \G^\infty$ so we can apply Theorem \ref{ktmain}.	For some small $\eps > 0$, consider the geodesic segment $\gamma = \varphi^{g_0}_{[0, \eps]}(w)$. Note that since $\Or(w)$ accumulates as $|t| \to \infty$ on the compact set $\Or$, if we take $\eps > 0$ small enough we may take $\pi(\gamma)$ to be disjoint from $\pi(\Or(w) \setminus \gamma) \cup \pi(\Or)$, where $\pi: SM \to M$ is the projection map. 
	
	Then we apply Theorem \ref{ktmain} to $\gamma' \subseteq \gamma$, where $\gamma' =  \varphi^{g_0}_{[\delta, \eps-\delta]}(w)$ for $\delta > 0$ small, to perturb $D_w \varphi^\eps_{g_0}$ by perturbing the metric only on a tubular neighborhood $V_{\gamma'}$ of $\gamma'$ small enough (possible by Proposition \ref{localpert} (1)) so that $$V_{\gamma'} \cap \text{Cl}(\pi(\Or) \cup \pi(\Or(w) \setminus \gamma)) = \varnothing.$$ 
	where Cl denotes closure. 
	
	By equivariance of holonomies the map $\psi^{g_0}_{v,w}$ can be rewritten as:
	$$\psi^{g_0}_{v,w} =   h^{cs}_{\varphi^\eps_{g_0}(w),v} \circ D_w{\varphi}_{g_0}^{\eps}|_{E_u} \circ  h^{cu}_{v,w}.  $$
	
	Then observe that perturbations to the metric of the form described in the previous paragraph affect only the $D_w{\varphi}_{g_0}^{\eps}|_{E_u}$ term in the composition above. Indeed, we recall that $h^{cu}_{w,v}$ depends only on the values of the cocycle  on a neighborhood of the $(-\infty, 0]$ part of the orbit $\varphi^t_{g_0}(w)$, and $h^{cs}_{\varphi^\eps_{g_0}(w),v}$ on a neighborhood of the $[\varepsilon, \infty)$ part of the orbit $\varphi^t_{g_0}(w)$ and on the cocyle along $\Or$. By construction of $V_{\gamma'}$, the cocyle is not perturbed in any of these sets.
	
	It remains to check that for an open and dense set of $1$-jets of symplectic maps $P$ from a small transversal to the flow at $w$ to a small transversal section to the flow at $\varphi_{g_0}^\eps(w)$ the map $\psi^{g_0}_{v,w}$ has the twisting property (we assume both transversals to be tangent to $E^u$ at $w$ and at  $\varphi_{g_0}^\eps(w)$, respectively), if we replace $D_w{\varphi}_{g_0}^{\eps}|_{E_u}$ by $DP|_{E^u}$. This implies by Theorem \ref{ktmain} that we can construct such a small perturbation in the space of metrics, completing the proof.
	
	Since both holonomy maps in the composition defining $\psi^{g_0}_{v,w}$ as above are symplectic isomorphisms, an open and dense subset of $\text{Sp}(E^u(v) \oplus E^s(v))$ is mapped under composition with the holonomies to an open dense set of the $1$-jets of symplectic maps $P$ as above, so it suffices to check that twisting holds when the map $\psi_{v,w}^{g_0}$ takes value in an open and dense subset of $\text{Sp}(E^u(v) \oplus E^s(v))$.
	
	Again, observe that the condition defining twisting is given by a Zariski open subset of the matrices $\text{Sp}(E^u(v) \oplus E^s(v))$. Hence, as long this set is non-empty the twisting set must also be open and dense in the analytic topology. Then by the paragraph above, this translates to an open and dense condition in $1$-jets of symplectic maps $P$, and as there is no condition imposed on higher jets, we obtain the desired result by Remark \ref{1jets}.
	
   To finish the proof, it thus suffices to check that the Zariski open set defining twisting is non-empty in the symplectic group, which is done below.
	\end{proof}

	\begin{lemma} \label{linalg2}
		There exists a matrix $A \in \text{Sp}(2n)$, where $\R^{2n}$ is taken with standard symplectic basis $\{e_i, f_i\}$ such that $A$ preserves $E^u := \text{span}\, \{e_i\}_{i=1}^n$ and
		$$\forall e^k_I, e^l_{I'}, k + l  = n: (\wedge^k A)(e^k_I) \wedge e^l_{I'} \neq 0. $$	\end{lemma}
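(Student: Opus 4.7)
The plan is to construct $A$ as a block-diagonal symplectic matrix relative to the Lagrangian decomposition $\R^{2n} = E^u \oplus E^s$, where $E^s := \mathrm{span}\{f_i\}_{i=1}^n$, so that the twisting condition reduces to a condition on the restriction $B := A|_{E^u}$. For any $B \in GL(n,\R)$, the block-diagonal matrix
$$A \;=\; \begin{pmatrix} B & 0 \\ 0 & (B^T)^{-1} \end{pmatrix}$$
lies in $\text{Sp}(2n)$ and preserves $E^u$: the symplectic relations $\omega(Ae_j, Ae_k) = 0 = \omega(Af_j, Af_k)$ hold automatically, while $\omega(Ae_j, Af_k) = \delta_{jk}$ forces the lower-right block to be $(B^T)^{-1}$. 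Hence preserving $E^u$ symplectically places no restriction on $B$ beyond invertibility.

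Next I would unpack the twisting condition. Since $e_I^k$ and $e_{I'}^l$ both lie in $\wedge^\bullet E^u$, the vector $(\wedge^k A)(e_I^k) \wedge e_{I'}^l$ depends only on $B$. Writing $Be_j = \sum_i B_{ij} e_i$, the standard expansion gives
$$(\wedge^k B)(e_I^k) \;=\; \sum_{|J| = k} \det(B_{J,I}) \, e_J^k,$$
where $B_{J,I}$ denotes the $k \times k$ submatrix of $B$ with rows $J$ and columns $I$. Wedging with $e_{I'}^l$ kills every term except the one with $J = \{1,\ldots,n\} \setminus I'$, so the twisting condition becomes
$$\det(B_{\{1,\ldots,n\}\setminus I',\, I}) \;\neq\; 0 \quad \text{for all } I, I' \subseteq \{1,\ldots,n\} \text{ with } |I| + |I'| = n.$$
As $(I,I')$ ranges over all such pairs, this is precisely the requirement that \emph{every} square minor of $B$ be nonzero.

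Finally I would exhibit a $B \in GL(n,\R)$ with all minors nonzero. Each vanishing minor is a polynomial constraint on the entries of $B$, so the locus of matrices with some vanishing minor is a finite union of proper algebraic subvarieties of $M(n,\R)$; its complement is Zariski open and will be nonempty (hence dense) as soon as one example exists. Explicit examples are classical: the Vandermonde matrix $B_{ij} = x_j^{i-1}$ for distinct positive reals $x_1 < \cdots < x_n$, or the Cauchy matrix $B_{ij} = 1/(x_i + y_j)$ for $0 < x_1 < \cdots < x_n$ and $0 < y_1 < \cdots < y_n$, has every minor strictly positive by standard total positivity results. Taking $A = B \oplus (B^T)^{-1}$ for such a $B$ then gives the matrix required by the lemma. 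There is no real obstacle here: the argument is purely linear-algebraic, and the only substantive input — existence of a totally-nonzero-minor matrix — is a classical fact about totally positive matrices.
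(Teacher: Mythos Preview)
Your proof is correct and takes a genuinely different, more direct route than the paper's. You reduce the problem to finding $B \in GL(n,\R)$ with all minors nonzero and then invoke total positivity (Vandermonde or Cauchy matrices) to exhibit one explicitly; the block-diagonal embedding $A = B \oplus (B^T)^{-1}$ into $\text{Sp}(2n)$ is exactly the right observation, and the Cauchy--Binet expansion cleanly identifies the twisting condition with nonvanishing of every square minor. The paper instead argues perturbatively: it fixes one pair $(I,I')$ at a time, notes the condition is open, and shows that any $A$ failing it can be pushed off the bad set by composing with small symplectic rotations $R_\theta^{i,j}$ in the planes $\mathrm{span}(e_i,e_j)$ and $\mathrm{span}(f_i,f_j)$, proceeding inductively over all pairs. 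Your argument is shorter and more elementary, and it has the virtue of producing an explicit matrix; the paper's argument avoids appealing to external total-positivity facts and stays within the bare-hands symplectic framework already set up in the surrounding sections, but at the cost of a somewhat fiddly induction.
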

	
\begin{proof}
	Note that for fixed $e^k_{I}, e^l_{I'}$ the property that $(\wedge^k A)(e^k_I) \wedge e^l_{I'} \neq 0$ is open in Sp$(2n)$. Thus by induction it suffices to show that or some $e^k_I, e^l_{I'}$  one can arrange so that $(\wedge^k A)(e_I^k) \wedge e^l_{I'}\neq 0$ and moreover $A$ still preserves $E^u$, by an arbitrarily small perturbation of $A \in \text{Sp}(2n)$ -- then repeat inductively by sucessively small perturbations over all pairs $I, I'$.
	
	To prove the claim, suppose $(\wedge^k A)(e^k_I) \wedge e^l_{I'} = 0$, and write $ (\wedge^k A)(e^k_I) = \sum_{J} a_J e^k_J.$ Since $A$ is invertible, there exists $J_0$ such that $a_{J_0} \neq 0$ and such that $|J_0 \cap I'|$ is minimal. Since $|J_0| + |I'| = n$, we have $|J_0 \cap I'| = \{1,\dots, n\}\setminus (J_0 \cup I')$, so we take an arbitrarily chosen bijection $i \mapsto j_i$ from $J_0 \cap I$ to $\{1,\dots, n\}\setminus (J_0 \cup I')$. 
	
	For $\theta > 0$, let $R_\theta^{i,j}$ given by rotating the (oriented) planes $\text{span}(e_i, e_j)$ and $\text{span}(f_i, f_j)$ by $\theta$ and preserving the other basis elements. Let $A'$ be obtained by composing $A$ with each of $R_\theta^{i, j_i}$ for $i \in J_0 \cap I$ (in any order, since the rotation matrices commute). One checks directly that $R_\theta^{i,j} \Omega (R_\theta^{i,j})^T = \Omega$, where $\Omega$ is the standard symplectic form, so $R^{i,j}_\theta$ preserves $E^u$ so $A'$ is symplectic and preserves $E^u$. Writing
	\[\prod_{i \in J_0 \cap I'}(\wedge^k R_\theta^{i, i_j})e_{J}^k = \sum_{L} b_L e^k_L,\]
	by a direct computation one checks that $b_{\{1,..., n\}\setminus I'}\neq 0$ if and only if $J = J_0$, which implies that $(\wedge^k A')(e^k_I) \wedge e^l_{I'} \neq 0$.
\end{proof}

\section{Proof of Theorem \ref{main}} \label{sec: proof}

We finish the proof of the Theorem \ref{main}. In what follows, let $\sigma: \Sigma \to \Sigma$ be the shift map of an invertible subshift of finite type $\Sigma$. The suspension of $\Sigma$ under a continuous $f: \Sigma \to \R^+$ is the compact metric space:
$$\Sigma_f := (\Sigma \times \R)/((x,s) \sim \alpha^n(x,s), \, n \in \Z),$$
where $\alpha(x,s) := (\sigma(x), s - f(x))$. The shift $\sigma$ lifts to a continuous-time system  $\sigma^t_{f}: \Sigma_f \to \Sigma_f$ given by $\sigma^t_f(x, s) = (x, s+ t)$ for $t \in \R$.

First, we need to represent Anosov flows by the suspension of a shift. The following is the standard statement of the construction of a Markov partition for an Anosov flow:

\begin{theorem} \label{markovpart}
	\cite[Theorem 6.6.5]{fh} Let $\Phi: M \to M$ be a $C^1$ Anosov flow. There is a semiconjugacy from a hyperbolic symbolic flow to $\Phi$ that is finite-to-one and one-to-one on a residual set of points, where the roof function for the subshift of finite type corresponds to the travel times between the local sections for the smooth system.	
\end{theorem}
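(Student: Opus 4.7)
The plan is to reduce the construction to a classical Markov partition for a return map on a union of transverse sections, following the Bowen--Ratner scheme; since this is a standard cited result I will only sketch the architecture rather than giving a full proof. First I would cover $M$ by a finite collection of small ``flow boxes'' $B_i = \bigcup_{t \in [0,\tau_i]} \Phi^t(D_i)$, where each $D_i$ is a small smoothly embedded codimension one disc transverse to the flow direction $X$, with diameter much smaller than the scale of local product structure. On the union $D = \bigsqcup D_i$ define a first return map $F: D \to D$ by $F(x) = \Phi^{r(x)}(x)$, where $r(x)$ is the smallest positive time with $\Phi^{r(x)}(x) \in D$. The hyperbolicity of $\Phi$ and transversality of the $D_i$ imply that $F$ is a hyperbolic homeomorphism in the sense of admitting local stable and unstable manifolds in $D$ (obtained by intersecting $W^s(x)$ and $W^u(x)$ with $D$ and projecting along the flow).

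Next I would build a Markov partition for $F$ by the usual Bowen construction. Starting from a sufficiently fine partition of each $D_i$ into ``rectangles'' $R_\alpha = [U_\alpha, S_\alpha]$ constructed from the local product (Bowen bracket) of intervals in $W^u_{D,\mathrm{loc}}$ and $W^s_{D,\mathrm{loc}}$, one refines finitely many times to enforce the Markov property: if $F(\mathrm{int}\, R_\alpha) \cap \mathrm{int}\, R_\beta \neq \varnothing$, then $F(W^u_{R_\alpha}(x)) \supseteq W^u_{R_\beta}(F(x))$ and $F^{-1}(W^s_{R_\beta}(F(x))) \supseteq W^s_{R_\alpha}(x)$. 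The transition matrix $T_{\alpha\beta} = 1$ iff the above non-empty intersection occurs defines an invertible subshift of finite type $(\Sigma, \sigma)$, and the standard coding $\pi: \Sigma \to D$ sending a sequence $\underline{a}$ to the unique point $\bigcap_n F^{-n}(R_{a_n})$ is continuous, surjective, finite-to-one, and injective off the preimage of $\bigcup_\alpha \partial R_\alpha$, a meager $F$-invariant set.

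To pass from the discrete to the continuous setting, define the roof function $f: \Sigma \to \R^+$ by $f(\underline{a}) = r(\pi(\underline{a}))$, which is Hölder continuous and bounded away from $0$ and $\infty$ because the $\tau_i$ are, and the travel times $r$ depend Hölder continuously on the base point. The semiconjugacy $\Pi : \Sigma_f \to M$ is then $\Pi(\underline{a}, s) = \Phi^s(\pi(\underline{a}))$, well-defined on the suspension because $\Pi(\alpha(\underline{a},s)) = \Phi^{s-f(\underline{a})}(\pi(\sigma \underline{a})) = \Phi^{s-r(\pi(\underline{a}))}(F(\pi(\underline{a}))) = \Phi^s(\pi(\underline{a}))$. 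It intertwines $\sigma^t_f$ with $\Phi^t$ by construction, and the finite-to-one and residually-injective properties transfer directly from $\pi$ since the residual set of points with orbits avoiding $\bigcup_i \partial D_i$ and all $\partial R_\alpha$ is invariant under $\Phi^t$.

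The main technical obstacle, and the reason this result is much more delicate than for diffeomorphisms, is the simultaneous choice of the transverse sections $D_i$ and the rectangles $R_\alpha$ so that the Markov refinement process terminates in finitely many steps and yields genuinely closed rectangles with disjoint interiors whose union is all of $D$. This requires handling the fact that stable and unstable manifolds of the flow are not in general transverse to the $D_i$ in a uniform way, and that orbits may re-enter $D$ at points arbitrarily close to $\partial D_i$. The original arguments of Bowen and Ratner resolve this by working at a scale below the local product structure size and appealing to uniform hyperbolicity for the refinement process; since the cited reference provides the full argument, I would quote it directly rather than reproduce it.
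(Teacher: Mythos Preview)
The paper does not prove this statement; it is quoted verbatim as \cite[Theorem 6.6.5]{fh} and used as a black box in Section~\ref{sec: proof}. Your sketch of the Bowen--Ratner construction (transverse sections, first return map, Markov rectangles via Bowen bracket, suspension) is the standard architecture and is indeed what the cited reference carries out, so there is nothing to compare against here beyond noting that your outline is consistent with the result being invoked.

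One minor remark: your last paragraph correctly flags the genuine subtlety (arranging the sections and rectangle boundaries compatibly), and since you conclude by deferring to the reference anyway, the proposal is effectively doing exactly what the paper does---citing the result rather than proving it.
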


At last we prove Theorem \ref{main}.
 
\begin{proof} [Proof of Theorem 1.1] We prove that the statement holds for all $g \in \G^k_{p,t}$, so the theorem is proved by Proposition \ref{twist}. Fix some such $g_0 \in \G^k_{p,t}$ and let $v, w \in SM$ be the vectors along whose orbits pinching and twisting hold respectively. 
	
	Following the proof of Theorem \ref{markovpart} in \cite{fh}, we see that it is possible to construct the Markov partition so that $v \in SM$ has a unique lift $(p,t)$ to $\Sigma_f$, the suspension of the shift: by enlarging the Markov rectangles by an arbitrarily small amount, one can make the orbit of $v$ only intersect their interior.  Then by \cite[Claim 6.6.9, Corollary 6.6.12]{fh} there is also a unique $(q,s)$ which lifts the homoclinic point with twisting $w$.
	
	Let $P: \Sigma_f \to SM$ be the semi-conjugacy map. We write $\mathcal{E} \to \Sigma_f$ for the pullback of the bundle $E^u \to SM$ to $\Sigma_f$ under $P$, and by $A^t: \mathcal{E} \to \mathcal{E}$ the pullback of the derivative cocycle. By using the return map of $A^t$ to the $0$ section of $\Sigma_f $, the cocycle $A^t$ determines a discrete time cocycle $A$ on $\mathcal{E} \to \Sigma$ identified with $\Sigma \times \{0\} \subseteq \Sigma_f$. Following the propositions in Section 2.1 of \cite{bgv} there exists a distance on $\Sigma$ which makes the cocycle $A$ dominated, so that it admits holonomies. 
	
	First we prove the following lemma which verifies agreement of holonomies of the geodesic flow and its symbolic discrete representation:
	
	\begin{lemma} \label{holsagree} The stable and unstable holonomies $H^{s,u}$ of $A$ on $\mathcal{E}$ are given by the center-stable and unstable $h^{cs,cu}$ holonomies of $\varphi_g^t$ on the stable bundle. 
		
		More precisely, let $x = \bar{x} \times \{0\} \in \Sigma_f^{g_0}$, $\bar{x} \in \Sigma$, and $y = \bar{y} \times \{0\} \in \Sigma_f$, $y \in \Sigma^{g_0}$, where $\bar{y} \in W^s(\bar{x})$, so that $y \in W^{cs}(x)$. Let $v = P(x)$ and $w = P(y)$, then $h^{cs}_{vw} = (H^{s}_{g,g_0})_{\bar{x}\bar{y}}$. The analogous result holds for unstable holonomies. 
	\end{lemma}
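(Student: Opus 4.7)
The strategy is to define a candidate holonomy using $h^{cs}$ and verify it satisfies the defining axioms of the stable holonomy of the dominated discrete cocycle $A$; uniqueness of such holonomies then forces the claimed equality. The unstable case follows by the symmetric argument applied to $h^{cu}$ (equivalently, by time reversal).

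Concretely, for $\bar{x} \in \Sigma$ and $\bar{y} \in W^s(\bar{x})$, set $v = P(\bar{x},0)$ and $w = P(\bar{y},0)$, and define
\[
\tilde{H}^s_{\bar{x},\bar{y}} := h^{cs}_{v,w}.
\]
Since the semiconjugacy $P$ sends the symbolic stable manifold into the center-stable manifold of the flow, $w \in W^{cs}(v)$ and the right-hand side is defined via Theorem \ref{holsflow}. I would then check the four properties that characterize stable holonomies of a dominated cocycle (Definition \ref{dom}): (a) $\tilde{H}^s_{\bar{x},\bar{x}} = \mathrm{Id}$ and the cocycle property $\tilde{H}^s_{\bar{y},\bar{z}} \circ \tilde{H}^s_{\bar{x},\bar{y}} = \tilde{H}^s_{\bar{x},\bar{z}}$, both inherited from the corresponding identities for $h^{cs}$; (b) equivariance under $A$; (c) the Hölder estimate $\|\tilde{H}^s_{\bar{x},\bar{y}} - \mathrm{Id}\| \leq C d_{\Sigma}(\bar{x},\bar{y})^\nu$.

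The main subtlety is the equivariance (b), because the discrete return times $t_1 := f(\bar{x})$ and $t_2 := f(\bar{y})$ to the section $\Sigma$ differ in general, so $A(\bar{x}) = D\varphi^{t_1}_v|_{E^u}$ and $A(\bar{y}) = D\varphi^{t_2}_w|_{E^u}$ propagate $v$ and $w$ over different flow times. The plan is to handle this by exploiting the flow-invariance of $h^{cs}$ together with the fact that for points on the same flow orbit the $h^{cs}$-holonomy is given by the derivative of the flow. Setting $v_1 = \varphi^{t_1}(v) = P(\sigma(\bar{x}),0)$, $w_1 = \varphi^{t_1}(w)$, and $w_2 = \varphi^{t_2}(w) = P(\sigma(\bar{y}),0)$, one has $w_2 = \varphi^{t_2-t_1}(w_1)$, so the composition property of $h^{cs}$ gives
\[
h^{cs}_{v_1,w_2} \;=\; h^{cs}_{w_1,w_2} \circ h^{cs}_{v_1,w_1} \;=\; D\varphi^{t_2-t_1}_{w_1} \circ h^{cs}_{v_1,w_1}.
\]
Combining this with the flow-equivariance of $h^{cs}$ along time $t_1$ (which gives $D\varphi^{t_1}_w \circ h^{cs}_{v,w} = h^{cs}_{v_1,w_1} \circ D\varphi^{t_1}_v$) and the flow cocycle identity $D\varphi^{t_2}_w = D\varphi^{t_2-t_1}_{w_1} \circ D\varphi^{t_1}_w$ yields exactly
\[
A(\bar{y}) \circ \tilde{H}^s_{\bar{x},\bar{y}} \;=\; \tilde{H}^s_{\sigma(\bar{x}),\sigma(\bar{y})} \circ A(\bar{x}),
\]
as required. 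I expect this reconciliation of the return-time mismatch via the flow direction in $W^{cs}$ to be the main technical point.

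For the Hölder bound (c), I would use that the semiconjugacy $P$ is Hölder-continuous on the section (since the cross-sections and the roof function are Hölder, as in the construction of \cite[Thm.~6.6.5]{fh}), so $d_{SM}(v,w) \leq C d_{\Sigma}(\bar{x},\bar{y})^{\nu'}$ for some $\nu' > 0$, whence the Hölder estimate $\|h^{cs}_{v,w} - I_{v,w}\| \leq C d_{SM}(v,w)^\beta$ from Theorem \ref{holsflow} gives $\|\tilde{H}^s_{\bar{x},\bar{y}} - \mathrm{Id}\| \lesssim d_{\Sigma}(\bar{x},\bar{y})^{\nu'\beta}$ after trivializing $\mathcal{E}$ along the short path from $v$ to $w$. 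Since $A$ is dominated, its stable holonomies are uniquely determined by properties (a)--(c) (this is the standard uniqueness statement for holonomies of dominated cocycles used in \cite{bgv}, and also follows from the contraction on the relevant fiber bundle), so $\tilde{H}^s = H^s$, finishing the stable case. The identical argument with $h^{cu}$ in place of $h^{cs}$ and with backward return times handles the unstable case, completing the proof.
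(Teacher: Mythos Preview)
Your proof is correct and takes a genuinely different route from the paper's. The paper argues directly from the explicit limit formula
\[
H^s_{\bar{x},\bar{y}} \;=\; \lim_{n\to\infty} (A^n_{\bar{x}})^{-1}\circ I_{\sigma^n\bar{x},\sigma^n\bar{y}}\circ A^n_{\bar{y}},
\]
rewrites the analogous limit defining $h^{cs}_{v,w}$ in terms of the pulled-back cocycle on $\Sigma_f$, and identifies the two limits by matching the discrete return times $T_n=\sum_{i<n}f(\sigma^i\bar{x})$ with the continuous flow time (absorbing the asymptotically vanishing discrepancy $\sum_{i<n}f(\sigma^i\bar{y})-(T_n+r)$ into the identification term). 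Your approach instead transports the flow holonomy $h^{cs}$ to the symbolic side, checks it satisfies the axioms characterizing the stable holonomy of a dominated cocycle, and concludes by uniqueness. The equivariance step is where the return-time mismatch $t_1\neq t_2$ must be reconciled in either approach; you handle it cleanly via the identity $h^{cs}_{w_1,\varphi^{t_2-t_1}(w_1)}=D\varphi^{t_2-t_1}_{w_1}$ (which is exactly how $h^{cs}$ is extended from $h^s$ along the flow direction), whereas the paper buries the same phenomenon in the limit. Your argument is more axiomatic and arguably more transparent about \emph{why} the holonomies agree; the paper's is shorter because it does not need to separately verify the H\"older bound or invoke the uniqueness theorem for holonomies of dominated cocycles.
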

	\begin{proof}
		By the proof of existence of holonomies as in \cite{bv}, one obtains the holonomy map as a limit:
		$$H^s_{\bar{x},\bar{y}} =  \lim_{n \to \infty} ((A^{n})_{\bar{x}})^{-1} \circ I_{\sigma^n\bar{x} \sigma^n\bar{y}} \circ (A^{n})_{\bar{y}}.$$
		
		As $n \to \infty$, note that $\sigma^n \bar{x} \times \{0\}$ and $\sigma^n \bar{y} \times \{0\}$ converge to the same stable manifold in $\Sigma_f$. Hence, if we let $T_n := \sum_{i=0}^{n-1} f(\sigma^i\bar{x})$ so that  $(A^{n})_{\bar{x}} = (A^{T_n})_{\bar{x} \times \{0\}}$, then $\sum_{i=0}^{n-1} f(\sigma^i\bar{y}) - (T_n+r) \to 0$, as $n \to \infty$, where $r \in \R$ is such that $\sigma_f^r(y) \in W^s(x)$. 
		
		On the other hand, using the formula defining the holonomies and the definition of $A^t$ as a pullback cocycle of $D\varphi^t_g|_{E^u}$:
		
		$$\begin{aligned} h^{cs}_{vw} &=  \lim_{T \to \infty} (D\varphi_g|_{E^u}^T)_{v}^{-1} \circ I_{\varphi_g^T(v), \varphi_g^{T+r}(w) } \circ (D\varphi_g|_{E^u}^T)_{\varphi_g^r(w)} \circ (D\varphi_g|_{E^u}^r)_{w},  \\
			&=  \lim_{T \to \infty} (A^T)_x^{-1} \circ I_{\sigma^T_f y, \sigma^{T+r}_f y } \circ (A^T)_{\sigma^r_f y} \circ (A^r)_y,
		\end{aligned}$$
		so letting $T = T_n$ we conclude that $ H^s_{\bar{x},\bar{y}} = h^{cs}_{vw}.$ 
	\end{proof}

	Hence, the cocycle $A$ over $\Sigma$ is simple. Let $\rho: SM \to \R$ be a Hölder potential and $\mu_\rho$ its associated equilibrium state for the geodesic flow of $g_0$. Let $\tilde{\rho}$ be the Hölder continuous potential on $\Sigma_{f}$ given by $\tilde{\rho} = \rho \circ P$, and  $\tilde{\mu}_\rho$ its associated equilibrium state for $\sigma_{f}^t: \Sigma_{f} \to \Sigma_{f}$. 
	
	It is a well-known fact (see e.g. \cite{axiomaflows}) that $P_g$ is in fact a measurable isomorphism between $(\Sigma_{f}, \tilde{\mu}_\rho)$ and $(SM, \mu_\rho)$.  Hence the Lyapunov spectrum of $A^t$ with respect to $\tilde{\mu}_\rho$ agrees with that of $D\varphi_g^t$ with respect to $\mu_\rho$, and it suffices to show simplicity of the spectrum of the former.
	
	Since $f: \Sigma \to \R$ is Hölder, identifying $\Sigma$ with $\Sigma \times \{0\} \subseteq \Sigma_{f}$, the Hölder continuous function:
	$$ \int_0^{f(x)} \tilde{\rho}(x, t) \, dt - P(\sigma^t_{f}, \tilde{\rho}) f_g(x),$$
	where $P(\sigma^t_{f}, \tilde{\rho})$ is the pressure of $\sigma^t_{f}$ with respect to $\tilde{\rho}$, defines a potential on $\Sigma = \Sigma \times \{0\} \subseteq \Sigma_{f_g}$ and has a unique equilibrium state $\mu$ which satisfies, for $F \in C^0(\Sigma_{f})$:
	$$ \int_{\Sigma_{f}} F \, d\tilde{\mu}_\rho = \frac{\int_\Sigma \left( \int_0^{f(x)} F(x,t) \, dt \right)d\mu}{\int_\Sigma f(x)\,  d\mu}$$
	by \cite[Proposition 4.3.17]{fh}. In particular, since $\mu$ is an equilibrium state it has local product structure. 
	
	The product $\mu \times dt$ defines a measure for the suspension flow $\sigma_1^t$ on $\Sigma_1$ (where $1$ is the constant function $1$) which has the same Lyapunov spectrum as $\mu$. Since $\mu \times dt$ and $\tilde{\mu}_\rho$  are related by a time change, the Lyapunov spectrum of $A^t$ with respect to $\mu_\rho$ and the Lyapunov spectrum of $A$ with respect to $\mu$ differ by a scalar, see e.g. \cite[Proposition 2.15]{clarklyap}. Hence applying Theorem $\ref{bvmainthm}$ to the simple cocycle $A$ for the measure $\mu$ we obtain simplicity of the Lyapunov spectrum for $\mu_\rho$. 
\end{proof}

\section{Proof of Theorems \ref{volmain} and \ref{generalmain}} \label{sec:volpres}

In this section we explain the needed modifications to the previous sections to give the proofs of Theorems \ref{volmain} and \ref{generalmain}:

\begin{proof}[Proof of Theorems \ref{volmain} and \ref{generalmain}] For $\frac{1}{2}$-bunched Anosov flows, the splitting $E^u \oplus E^0 \oplus E^s$ may not be $C^1$, so instead we consider the derivative cocycle on the $C^1$-bundles $Q^u := E^{cu}/E^0$ and $Q^s:= E^{cs}/E^0$, which we have shown to be $1$-bunched in Proposition \ref{bunchingquotient}. In what follows, we prove simplicity for the spectrum on $Q^u$ and $Q^s$ implies the desired result since $D\Phi$ on $Q^{u,s}$ has the same spectrum as $D\Phi$ on $E^{u,s}$.

	For Theorem \ref{generalmain}, recall that topological mixing is $C^1$-open and $C^k$-dense in the space of Anosov flows. Then we follow the propositions in Section \ref{sec: pt} to construct orbits with pinching and twisting for the cocycle on $Q^u$ by a $C^k$-small perturbation, which in this case is achievable since the analogue of Theorem \ref{ktmain} is clear in the space of all vector fields and $\mathfrak{X}^k_A(X)$ is open by structural stability in the space of all vector fields and moreover the linear algebra lemmas (Lemma \ref{linalg}, Lemma \ref{linalg2}) needed for the case of Sp$(2n)$ are immediate for GL$(n)$. The $C^1$-openness of the conditions also is proved similarly. Then by a symmertric argument it is clear that pinching and twisting for both $Q^u$ and $Q^s$ is $C^1$-open and $C^k$-dense. The proof then follows the same outline in Section \ref{sec: proof}.

	The proof of Theorem \ref{volmain} is similar, in that the linear algebra lemmas (Lemma \ref{linalg}, Lemma \ref{linalg2}) needed for the case of Sp$(2n)$ are still  immediate for SL$(n)$. Moreover, topological mixing is known for all $C^2$-volume-preserving Anosov flows. Finally, it remains to prove an analogue of Theorem \ref{ktmain} for the conservative class, which we do in the next section. With that in hand, the proof also follows the same outline as Theorem \ref{main}.
\end{proof}

\subsection{Conservative Perturbations} In this section we prove the analogue of Theorem \ref{ktmain} in the volume-preserving category. To the best of the author's knowledge the result is not found anywhere in the literature so the complete proof is included here. Throughout, we let $X \in \mathfrak{X}_{m}^\infty(M)$ be a non-vanishing vector field generating the flow $\varphi_X$ on the smooth manifold $M$ which preserves the smooth volume $m$. Fix an embedded segment of a flow orbit $l:[0,\eps] \to M$ parametrized by the time-parameter and a small transversal smooth hypersurface  $\Sigma(0)$ to $X$ at $l(0)$. 

For $t \in [0,\eps]$, set $\Sigma(t) = \varphi_X^t(\Sigma(0))$ so that $\iota_X m$ is a volume form on the hypersurfaces $\Sigma(t)$. The following result, whose proof is elementary except for an application of the conservative pasting lemma, shows that it is possible to perturb the $k$-jets in the conservative setting generically by $C^k$-small perturbations.

\begin{theorem} \label{conspres} Let $Q$ be some dense subset of the space of $k$-jets of volume-preserving maps $(\Sigma(0), \iota_X m,l(0)) \to (\Sigma(\eps),  \iota_X m, l(\eps))$.
	
	Then there is arbitrarily $C^k$-close to $X$ an $m$-preserving $X'$ such that:
	
	\begin{enumerate} \item [(a)] $Y:= X' -X$ is supported in an arbitrarily small tubular neighborhood $B$ of $l([\delta, \eps-\delta])$, for some $0< \delta < \eps$;
		\item [(b)] $Y = 0$ on $l([0,1])$ and $Y$ is tangent to the hypersurfaces $\Sigma(t)$;
		\item [(c)] The flow of $X'$ generates a map $(\Sigma(0), l(0)) \to (\Sigma(\eps), l(\eps))$ with $k$-jets in $Q$.
		
	\end{enumerate} 
\end{theorem}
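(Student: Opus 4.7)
The plan is to work in a flowbox chart for $X$ in which $m$ takes a product form, reducing the problem to a perturbation of the identity Poincaré map on $\Sigma(0)$, and then to invoke the conservative pasting lemma to localize the resulting perturbation spatially.

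Since $X$ is non-vanishing along $l$ and preserves $m$, standard flowbox coordinates provide $(t, x) \in (-\eta, \eps + \eta) \times U$ on a tubular neighborhood of $l$, with $U \subseteq \R^{n-1}$ a neighborhood of $0$, in which $l(t) = (t, 0)$, $X = \partial_t$, and $\Sigma(t) = \{t\}\times U$. Because $L_X m = 0$ and $X = \partial_t$, the volume form takes the product shape $m = dt \wedge \omega$ for a $t$-independent volume form $\omega$ on $U$ which restricts to $\iota_X m$ on every slice. Any $Y = Z_t(x)\,\partial_x$ with $Z_t(0) = 0$ is then automatically tangent to the $\Sigma(t)$ and vanishes on $l$; a Cartan's formula computation, combined with $L_X(dt \wedge \omega) = 0$, reduces the divergence-free condition $L_{X + Y} m = 0$ to $L_{Z_t}\omega = 0$ for every $t$. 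Integrating $\dot t = 1,\ \dot x = Z_t(x)$ identifies the Poincaré map of $X + Y$ from $\Sigma(0)$ to $\Sigma(\eps)$, post-composed with $(\varphi_X^\eps)^{-1}$ (the identity in these coordinates), with the time-$\eps$ flow $\Phi^\eps_Z$ of $\dot x = Z_t(x)$ on $U$.

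The Poincaré map of $X$ itself is thus the identity on $U$, so by density of $Q$ we may choose $j \in Q$ arbitrarily close to the identity $k$-jet. Since the group of $k$-jets at $0$ of $\omega$-preserving diffeomorphisms fixing $0$ is a finite-dimensional Lie group whose exponential map is a local diffeomorphism near the identity, there is an $\omega$-preserving vector field $V$ on $U$ with $V(0) = 0$, arbitrarily $C^k$-small, whose time-$\eps$ flow has $k$-jet $j$ at $0$; concretely, one may take $V$ to be a polynomial vector field, with the $\omega$-preserving condition being equivalent to $\iota_V \omega$ being a closed polynomial $(n-2)$-form. A smooth reparametrization $\tau: [0,\eps] \to [0,\eps]$ with $\tau(0) = 0$, $\tau(\eps) = \eps$, and $\operatorname{supp}(\tau') \subseteq [\delta, \eps - \delta]$ then promotes $V$ to the time-dependent $\omega$-preserving field $Z_t(x) := \tau'(t)\, V(x)$, vanishing for $t \notin [\delta, \eps - \delta]$ and sharing the same time-$\eps$ flow as $V$.

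The main obstacle is to localize $V$ further to an arbitrarily small ball $B_\rho(0) \subseteq U$ without destroying the divergence-free condition or changing the $k$-jet at $0$; a naive spatial cutoff $\chi(x) V(x)$ ruins $L_V \omega = 0$. This is precisely the content of the conservative pasting lemma for volume-preserving flows (e.g.\ in the form of Arbieto--Matheus), but can also be carried out explicitly via the potential form: on a ball one writes $\iota_V \omega = d\beta$ by the Poincaré lemma (with $\beta$ an $(n-3)$-form, a function when $\dim U = 2$), multiplies $\beta$ by a spatial bump $\chi$ equal to $1$ on $B_{\rho/2}(0)$ and supported in $B_\rho(0)$, and defines $\tilde V$ by $\iota_{\tilde V}\omega = d(\chi \beta)$. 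The right-hand side is a closed $(n-2)$-form supported in $B_\rho(0)$, equal to $d\beta = \iota_V\omega$ on $B_{\rho/2}(0)$, so $\tilde V$ is $\omega$-preserving, supported in $B_\rho(0)$, equal to $V$ on $B_{\rho/2}(0)$, and $C^k$-close to $V$. Setting $\tilde Z_t(x) := \tau'(t)\, \tilde V(x)$ and $X' := X + \tilde Z_t(x)\,\partial_x$ (extended by $X$ outside the tube) gives a divergence-free $X'$ with $Y := X' - X$ supported in an arbitrarily small tubular neighborhood of $l([\delta, \eps - \delta])$, vanishing on $l$, tangent to each $\Sigma(t)$, and whose Poincaré map has $k$-jet $j \in Q$ at $l(0)$. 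The $C^k$-smallness of $Y$ follows from that of $V$, which in turn follows from the closeness of $j$ to the identity jet.
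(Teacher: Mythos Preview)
Your argument is correct and takes a genuinely different route from the paper's. Both proofs pass to flowbox coordinates where $X = \partial_t$ and $m = dt \wedge \omega$, reducing the problem to a time-dependent divergence-free perturbation on the transversal. From there the paper works at the level of \emph{maps}: it builds the target Poincar\'e map $f$ by cutting off a polynomial jet representative with a bump and then repairing volume-preservation via Moser's trick together with the conservative pasting lemma of \cite{pasting}; it then produces an isotopy to the identity by linear interpolation followed by a second Moser correction to make each time-$t$ map conservative. You instead work infinitesimally: the Lie-group exponential on $k$-jets yields a divergence-free field $V$ whose time-$\eps$ flow already has the prescribed jet, the reparametrization $\tau'(t)V$ handles the temporal cutoff, and spatial localization is achieved by the elementary potential trick $\iota_{\tilde V}\omega = d(\chi\beta)$, bypassing the pasting lemma entirely. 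Your route is shorter and more self-contained; the paper's is more hands-on with the target diffeomorphism.

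One small caveat: your claim that $V$ may be taken polynomial and exactly $\omega$-preserving tacitly assumes $\omega$ is the standard Euclidean form on $U$, since otherwise the polynomial truncation of an $\omega$-divergence-free germ need not remain $\omega$-divergence-free. This is easily arranged by a further Moser change of coordinates on $U$ (which the paper carries out explicitly), or else one simply drops the word ``polynomial'' and takes $V$ to be any smooth $\omega$-preserving representative of the Lie-algebra element, which suffices for the potential trick and the $C^k$-smallness estimate.
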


\begin{proof} If $B$ is sufficiently small we may assume that it is foliated by the transversals $\Sigma(t)$ and, moreover, by passing to a further neighborhood we may assume that the transverse sections are mapped diffeomorphically onto each other by the flow $X$, i.e., we may construct the perturbation in a flow box with transversals given by the $\Sigma(t)$. 
	
In the flowbox, a classic application of Moser's trick allows us to assume that the flow is in normal coordinates $\varphi^t_X (x_1, ..., x_{n-1}, s) \mapsto (x_1, ..., x_{n-1}, s +t)$, where the image of $l$ is contained in $\{x_1 = ... = x_n = 0\}$ and $m = dx^1 \wedge ... \wedge dx^n$ . In these coordinates, we may regard $B \cong U \times [0,\eps]$, where $U\subseteq \R^{n-1}$ is a domain and so $Q \subseteq J^k_{m}(n-1, \R)$, where $J^k_{m}(n-1, \R)$ is the Lie group of $k$-jets of volume preserving maps fixing the origin. 

Using the flow to identify the fibers of $U \times \R \to \R$, the problem is thus reduced to the construction, for each $\delta > 0$, of a time dependent vector field $\{Y_t\}_{t \in [0,\eps]}$ on $\R^{n-1}$ with the following properties:
\begin{enumerate}
	\item [(a)] $Y_t(0) = 0$ and $\text{supp}(Y_t) \subseteq U$ for $t \in [0,\eps]$;	
	\item [(b)] $Y_t \equiv 0$ on $[0, \delta]$ and $Y_t \equiv 0$ on $[\eps-\delta, \eps]$;
	\item [(c)] $Y_t$ is divergence free for all $t \in [0,\eps]$;
	\item [(d)] The time-$\eps$ map $f: (\R^{n-1},0) \to (\R^{n-1},0)$ of $Y_t$ has derivative at $0$ in $Q$;
	\item [(e)] $\|Y_t\|_{C^\infty} < \delta$ for all $t \in [0,\eps];$
\end{enumerate}

The construction is given by first specifying the time-$\eps$ map $f$ and then finding an appropriate isotopy within the volume-preserving category to the identity. 

Fix some $\theta \in Q$ sufficiently close to the $k$-jets of $I$ (the identity map) and a map $F: (\R^{n-1},0) \to (\R^{n-1},0)$ whose $k$-jet at the origin is given by $\theta$. Take some $C^\infty$ bump function $\rho: \R^{n-1} \to \R$ which interpolates between the constant function $1$ in $B(0, \eta/2)$ to the constant function $0$ outside of $B(0,\eta)$ for some $\eta$ small. Let $F' = \rho F$; if $\theta$ is sufficiently close to $0$, then $||F'-I||_{C^k}$ is small so in particular $F'\in \text{Diff}(\R^{n-1})$. Applying Moser's trick, we can find an $f \in \text{Diff}_{m}(\R^{n-1})$, i.e. preserving $m$, which is $C^k$-close to the identity and which agrees with $F'$ where it is conservative, namely, everywhere except $B(0,\eta)\setminus B(0, \eta/2)$. In particular, the $k$-jet of $f$ at the origin equals $\theta \in Q$.

To obtain such an $f$, we construct a family $s \mapsto h_{s} \in \text{Diff}(\R^{n-1})$ such that $h_1 = F$ and $h_0 =:f$ is conservative. Let $r: B \to \R$ be the smooth function $C^k$ close to $1$ satisfying $F_*\mu = r\mu$. Then for $s \in [0,1]$ we solve $(h_s)_* \mu = r^s \mu$, namely div$Z_{s} =  r^s\log r^s$, where $Z_{s} = \partial_s h_{s}$. Moreover, the proof of the Poincaré Lemma shows that we can take $Z_s$ to be constant equal to $0$ outside of of $B(0,\eta)$. By the conservative pasting lemma \cite{pasting}, there exists $W_s$ which agrees with $Z_s$ on a neighborhood of $\R^{n-1} \setminus (B(0,\eta)\setminus B(0, \eta/2))$ and is divergence-free. Then $Z'_s = Z_s - W_s$ also satisfies div$Z'_{s} =  r^s\log r^s$ and it is identically $0$ where $r = 1$, so that $h_s(x) = F(x)$ on $B(0,\eta)\setminus B(0, \eta/2)$, where now $\partial_s h_s = Z'_s$. In particular, $f:= h_0$ is the identity outside of $B(0,\eta)$ and its $k$-jet at the origin is given by $\theta$. This constructs the desired $f$.

Now let $\alpha:[0,\eps]\to [0,\eps]$ be a $C^\infty$ function such that
$\alpha \equiv 0$ on $[0, \delta]$ and $\alpha \equiv \eps$ on $[\eps-\delta, \eps]$. If $\|f-I\|_{C^k}$ is sufficiently small (which is ensured by taking $\theta$ closer to the jets of the identity), the maps $g_t := \alpha(t) f + (1-\alpha(t))I$ are all diffeomorphisms and $t \mapsto \partial_t g_t$ is a time-dependent vector field that satisfies all desired properties except for being divergence-free. 

To repair that, again Moser's trick constructs a family $s \mapsto g_{t,s}$ such that $g_{t,0} = g_t$ and $g_{t,1}$ is conservative as follows. Let $r_t: B \to \R$ be the smooth 1-parameter family of smooth functions $C^k$ close to $1$ satisfying $(g_t)_*\mu = r_t\mu$. Then for $s \in [0,1]$ we solve $(g_{t,s})_* \mu = r_t^s \mu$, namely div$Z_{t,s} =  r^s_t\log r_t^s$, where $Z_{t,s} = \partial_s g_{t,s}$. It is an easy consequence of the proof of the Poincaré lemma that the family $Z_{t,s}$ may be taken to be smooth in $t$ with small $t$ derivatives, since $t \mapsto r_t$ as a 1-parameter family has the same properties. Moreover, we can take supp $Z_{t,s} \subseteq \text{supp} \, (r_t-1)$. The $C^k$ norm of the $Z_{t,s}$ is a continuous function of the $C^k$ norm of $r^s_t\log r_t^s$, so that taking $Y_t = \partial_t g_{t,1}$ finishes the proof.
\end{proof}

\end{document}